\documentclass[11pt]{amsart}

\usepackage[T1]{fontenc}
\usepackage[utf8]{inputenc}
\usepackage[english]{babel}

\usepackage[letterpaper,top=3cm,bottom=3.5cm,left=3cm,right=3cm,marginparwidth=1.75cm]{geometry}

\usepackage{textcomp}
\usepackage{amsmath,amssymb,amsthm}
\usepackage{mathtools}
\usepackage{graphicx}
\usepackage{gensymb}
\usepackage{tipa}
\usepackage{wasysym}
\usepackage{tikz}
\usepackage{fancyhdr}
\usepackage{accents}

\usepackage[hidelinks]{hyperref}

\newtheorem{theorem}{Theorem}[section]
\newtheorem{example}[theorem]{Example}
\newtheorem{proposition}[theorem]{Proposition}
\newtheorem{lemma}[theorem]{Lemma}
\newtheorem{corollary}[theorem]{Corollary}
\newtheorem{definition}[theorem]{Definition}

\newtheorem{remark}[theorem]{Remark}

\newlength{\dhatheight}

\title{Bisingular $(\rho,\delta)$ pseudodifferential operators\\ on products of compact Lie groups}
\author{Guido Drei, Serena Federico and Alberto Parmeggiani}

\address{Guido Drei\newline Dipartimento di Matematica, Universit\`a di Bologna, Piazza di Porta San Donato 5, 40126, Bologna, Italy}
\email {guido.drei2@unibo.it}

\address{Serena Federico\newline Dipartimento di Matematica, Universit\`a di Bologna, Piazza di Porta San Donato 5, 40126, Bologna, Italy}
\email{serena.federico2@unibo.it}

\address{Alberto Parmeggiani\newline Dipartimento di Matematica, Universit\`a di Bologna, Piazza di Porta San Donato 5, 40126, Bologna, Italy}
\email{alberto.parmeggiani@unibo.it}

\thanks{The authors are members of the Research Group GNAMPA of INdAM}
\thanks{The second and third authors were partially supported by the Italian Ministry of University and Research, under PRIN2022 (Scorrimento) “Anomalies
in partial differential equations and applications”, 2022HCLAZ8\_002, J53C24002560006.}
\thanks{{\bf 2010 Mathematics Subject Classification.} Primary 35S05; Secondary 22E30, 43A77, 58J40}
\thanks{{\it Key words and phrases: Product of compact Lie groups; Bisingular calculus; Pseudodifferential operators; $(\rho,\delta)$-classes; Kernel estimates; Bihypoellipticity; Parametrices.}}

\begin{document}

\begin{abstract}
In this paper we develop the $(\rho,\delta)$ calculus of bisingular pseudodifferential operators on the product compact Lie groups, which extends the one that was earlier introduced by the second and third authors
in the $(1,0)$ case. The $(\rho,\delta)$ case is the calculus suitable to construct parametrices of hypoelliptic operators on $G_1\times G_2$. 
\end{abstract}

\maketitle

\renewcommand{\theequation}{\thesection.\arabic{equation}}

\section{Introduction}\label{sec1}

Bisingular operators came naturally to the fore in the celebrated Atiyah-Singer Index Theorem \cite{AS}, as systems of the kind
$$A_1\boxtimes A_2=\left[\begin{array}{cc}A_1\otimes I & -I\otimes A^*_2\\
  I\otimes A_2 & A_1^*\otimes I\end{array}\right],$$
where $A_j$ is a pseudodifferential operator of order $m_j$ on a manifold $M_j.$ The notation $A_1\otimes I$ refers of course to the fact that it is the unique operator that
acts on tensor products of functions $u_1=u_1(x_1)$ on $M_1$ and $u_2=u_2(x_2)$ on $M_2$ as
$$(A_1\otimes I)(u_1\otimes u_2)=(A_1u_1)\otimes u_2.$$
Likewise for $I\otimes A_2.$ It turns out that $A_1\boxtimes A_2$ is a bisingular pseudodifferential operator of bi-order $(m_1,m_2)$ as defined in Rodino \cite{R}, who introduced
and studied the bisingular $(1,0)$-classes (i.e. $\rho=1$ and $\delta=0$) and their calculus properties. See also the paper \cite{NR} by Nicola and Rodino about residues and index for bisingular operators (with $\rho=1$ and $\delta=0$), and the paper \cite{BS} by Borsero and Schulz about a wave-front set tailored to the bisingular calculus (in the $(1,0)$ classes). It is worth recalling that problems related to bisingular integral operators appeared in Pilidi \cite{P}, when
studying a boundary value problem on the distinguished boundary (the Cartesian product of boundaries)
for holomorphic functions in two complex variables. A more recent, very interesting, case of use of the bisingular calculus is found in the paper
\cite{GW} of G\'erard and Wrochna, in their construction of Hadamard states for Klein-Gordon fields in the interior of a light cone in a globally hyperbolic spacetime.
However, the main \textit{raison d'\^{e}tre} of bisingular operators is indeed the class of systems of the kind $A_1\boxtimes A_2$ as introduced by Atiyah and Singer.
Along that line, it is of course interesting to consider general bisingular systems of the kind
$$\mathbf{A}=\left[\begin{array}{cc}A_{11}\otimes I & I\otimes A_{12}\\
I\otimes A_{21} & A_{22}\otimes I\end{array}\right],$$
where $A_{11},A_{22}$ are pseudos of order $m_1$ on some manifold $M_1$ and $A_{12},A_{21}$ are pseudos of order $m_2$ on some other manifold $M_2$,
so that in the calculus for systems such as $\mathbf{A}$ one has to deal with general bisingular entries. Operators of the kind $\mathbf{A}$ also suggest that natural classes of bisingular operators are the classes of symbols in
$$S^{m_1,0}_{(\rho_1,1),(0,0)}+S^{0,m_2}_{(1,\rho_2),(0,0)}\subset S^{m_1,m_2}_{(\rho_1,\rho_2),(0,0)},$$
and, more generally, with entries $jk$ that belong to $S^{m_j,m_k}_{(\rho_j,\rho_k),(\delta_j,\delta_k)}$.

Notice that 
$$(A_1\boxtimes A_2)^*A_1\boxtimes A_2=\left[\begin{array}{cc}A_1^*A_1\otimes I+I\otimes A_2^*A_2 & 0\\
0 & A_1A_1^*\otimes I+I\otimes A_2A_2^*\end{array}\right],$$
which shows that certain (non equivalent) sums of squares may be brought into a product of bisingular operators. Another interesting example, related to the $\bar\partial$ operator is shown in the examples of Section 7.

In \cite{FP}, the second and third authors extended the bisingular calculus to products of compact Lie groups $G_1\times G_2$, in the framework of the \textit{global calculus}
introduced by Ruzhansky and Turunen in \cite{RT}. That calculus was developed in the $(1,0)$ classes. But to deal with parametrices of non-elliptic but
hypoelliptic operators, and add flexibility to the calculus, it is natural to consider also $(\rho,\delta)$-classes. This is our aim in the present paper, in which we
indeed extend the bisingular global calculus on a product of compact Lie groups to $(\rho,\delta)$-classes, the main point being the control of the Schwartz kernel of the operators. As it will be clear in Section 6, where we introduce classes of bihypoelliptic operators and prove the existence of parametrices, in the bisingular case pointwise estimates on the inverse of the symbol do not suffice to obtain the symbol estimates for the inverse of the symbol, as it is in the Euclidean setting. In this case, to get the needed estimates of the inverse of the symbol, one has to assume a certain behavior of the latter with respect to finite-difference derivatives of the symbol itself (see \eqref{horm} below). That gives rise to a new situation, not paralleled in the standard $(\rho,\delta)$-classes.

We end this introduction by giving the plan of the paper. In the next section we will be giving some preliminaries, to summarize the kind of Fourier analysis that we will be using
throughout the paper. In Section \ref{sec3}, we will define bisingular $(\rho,\delta)$ symbols. In Section \ref{sec4}, we will prove some delicate kernel estimates which are fundamental
to the calculus, that will be developed in Section \ref{sec5}. In Section \ref{sec6}, we will define biellipticity and bihypoellipticity, and give the main results about the existence of a bisingular parametrix. In the final Section \ref{sec7} we will give a number of examples, among which an example of a hypoelliptic bisingular system of
the kind $A_1\boxtimes A_2$ with inverse belonging to our $(\rho,\delta)$-classes, and examples that are instances of operators such as the $\bar\partial,$ or the Laplacian, that can be factored as products of bisingular systems.
  
\section{Preliminaries}\label{sec2}
In this section we briefly introduce all the necessary tools to perform our analysis on compact Lie groups. We shall recall the definition of the Fourier transform, of the difference operators (that is, roughly speaking, of the differential operators in the ``frequency variable''), some corresponding crucial properties, and Taylor's formula in this setting.

In the following $G$ will always be a compact Lie group. By $\mathrm{Rep}(G)$ we denote the set of all irreducible unitary representations of $G$, while  $\widehat{G}$ denotes the unitary dual of $G$, i.e.
the set of all equivalence classes of irreducible unitary representations of $G$. Since $G$ is compact, each $\xi\in\mathrm{Rep}(G)$ is finite dimensional and $\widehat{G}$ is countable. 
For any representation $\xi\in\mathrm{Rep}(G)$, the representation space is denoted by $\mathcal{H}_{\xi}$ and $\mathcal{U}(\mathcal{H}_{\xi})$ is the corresponding space of unitary 
operators on $\mathcal{H}_{\xi}$.\ \\
Given a smooth function $f\in C^{\infty}(G)$ and a representation $\xi\in\mathrm{Rep}(G)$, the (matrix-valued) Fourier transform $\hat{f}(\xi)$ of $f$ at $\xi$ is defined by 
$$\hat{f}(\xi)=\int_Gf(x)\xi^*(x)dx,$$ 
where $\xi^*(x):=\overline{^t\xi(x)}$ is the adjoint representation of $\xi$ and $dx$ denotes the Haar measure on the compact group $G$. 
Notice that for a representation $\xi:G\rightarrow\mathcal{U}(\mathcal{H}_{\xi})$ of dimension $d_{\xi}:=\mathrm{dim}(\mathcal{H}_{\xi})$, 
one has $\hat{f}(\xi)\in\mathbb{C}^{d_{\xi}\times d_{\xi}}$ (the complex matrices of dimensions $d_\xi\times d_\xi$). 
We can recover the function $f$ from its Fourier coefficients by 
$$f(x)=\sum_{\xi\in\widehat{G}}d_{\xi}\mathrm{Tr}(\xi(x)\hat{f}(\xi)), \ \ \ x\in G,$$ 
where $\mathrm{Tr}(\cdot)$ is the trace operator. 
The following Parseval identity holds 
$$\lVert f\rVert^2_{L^2(G)}=\sum_{\xi\in\widehat{G}}d_{\xi}\lVert\hat{f}(\xi)\rVert^2_{HS}=:\lVert\hat{f}\rVert^2_{\ell^2(\widehat{G})},$$ 
where $\lVert\hat{f}(\xi)\rVert_{HS}:=(\mathrm{Tr}(\hat{f}(\xi)^*\hat{f}(\xi)))^{\frac{1}{2}}=\biggl(\sum_{i,j=1}^{\mathrm{dim}(\xi)}|\hat{f}(\xi)_{ij}|^2\biggr)^{\frac{1}{2}}$ 
is the Hilbert-Schmidt norm.

Recall that if $u\in\mathcal{D}'(G)$ and $\xi\in\hat{G}$, then $\hat{u}(\xi)=u(\xi^*)=[u(\xi^*_{jk}]_{1\leq j,k\leq d_\xi}$.

\begin{definition}A difference operator $Q_{\xi}$ of order $k\in\mathbb{N}_0$ on $\mathcal{F}(\mathcal{D}'(G))=\{\hat{u}: u\in\mathcal{D}'(G)\}$ is an operator such that 
$$Q_{\xi}\hat{u}(\xi)=\widehat{q_{Q}u}(\xi),$$ 
for some smooth function $q_Q\in C^{\infty}(G)$ vanishing of order $k$ at the identity element $e\in G$, that is, $q_Q(e)=P_xq_Q(e)=0$ for every left-invariant differential operator 
$P_x\in\mathrm{Diff}^{k-1}(G)$ of order $k-1$.
\end{definition}

We have denoted by $\mathrm{Diff}^{l}(G)$ the set of all left-invariant differential operators on $G$ of order $l\in\mathbb{N}_0$.
By $\mathrm{diff}^k(\widehat{G})$, we denote the set of all difference operators of order $k$ on $G$.

\begin{definition}A collection of $n_{\Delta}\geq n=\mathrm{dim}(G)$ difference operators $\Delta_1,\cdots,\Delta_{n_{\Delta}}$ in $\mathrm{diff}^1(\widehat{G})$ is called 
\textit{admissible} if:
\begin{enumerate}
    \item the corresponding functions $q_1,\cdots,q_{n_{\Delta}}\in C^{\infty}(G)$ vanish of order (at least) one at the neutral element, that is $q_1(e)=\cdots=q_{n_{\Delta}}(e)=0$;
    \item $dq_j(e)\neq 0$ for all $j=1,\cdots,n_{\Delta}$;
    \item $\mathrm{rank}(dq_1(e),\cdots,dq_{n_{\Delta}}(e))=n$.
\end{enumerate} 
A collection of admissible difference operators is said to be \textit{strongly admissible} if one has $\bigcap_{j=1}^{n_{\Delta}}\{x\in G;q_j(x)=0\}=\{e\}.$
\end{definition}

Given a fixed family of smooth functions $Q=\{q_j\}_{j=1,\cdots,n_{\Delta}}$, we shall denote by $\Delta_Q$ the associated admissible collection of difference operators and, 
for $\alpha\in\mathbb{N}_0^{n_{\Delta}}$, by  $\Delta_{Q}^{\alpha}:=\Delta^{\alpha_1}_{q_1}\cdots\Delta^{\alpha_{n_{\Delta}}}_{q_{n_{\Delta}}}$ the element in $\mathrm{diff}^{|\alpha|}(\widehat{G})$ 
associated with the smooth function $q^{\alpha}:=q^{\alpha_1}_{1}\cdots q_{n_{\Delta}}^{\alpha_{n_{\Delta}}}$.\ \\
Once the collection of difference operators is fixed, one may find a family of differential operators in $\mathrm{Diff}^{|\alpha|}(G)$, denoted by $\partial_x^{(\alpha)}$, 
such that the following Taylor's formula holds 
$$f(x)=\sum_{|\alpha|<N}\frac{1}{\alpha!}q(x)^{\alpha}\partial_x^{(\alpha)}f(e)+O(|x|^N),\ \ \ h(x)\rightarrow 0,$$ 
for all $f\in C^{\infty}(G)$, where $|x|$ is the geodesic distance from $x$ to $e_G$. 
The differential operators $\partial^{(\alpha)}_x$ can be replaced by $\partial_x^{\alpha}:=\partial_{x_1}^{\alpha_1}\cdots\partial_{x_n}^{\alpha_n}$, 
where $\partial_{x_j},j=1,\cdots,n=\mathrm{dim}(G)$, is a collection in the Lie algebra $\mathfrak{g}$ of left-invariant first order differential operators corresponding 
to some linearly independent left-invariant vector fields on $G$. We shall use the notation $\partial_{x_j}$ and $\widetilde{\partial_{x_{j}}}$ for the left and right invariant 
vector fields, respectively. Once we fix an orthonormal basis for the left-invariant vector fields for $\mathfrak{g}$, then any element of $\mathrm{Diff}^k(G)$ can be written as a 
linear combination in terms of the element of the basis (and the same holds for the right-invariant vector fields).\ \\
The family of functions $\{q_{ij}=\xi_{ij}-\delta_{ij}\}_{\xi\in\widehat{G},1\leq i,j\leq d_{\xi}}$, where $\delta_{ij}$ is the Kronecker's delta, induces a strongly admissible collection 
of difference operators. Therefore, in the sequel, we choose the latter as the fixed admissible collection.

\begin{definition}\label{Leibnizdef} A collection $\Delta_Q$ of difference operators satisfies the \textit{Leibniz-like} property if, for every distribution $\hat{u}_1,\hat{u}_2\in\mathcal{F}(\mathcal{D}'(G))$, 
$$\Delta_{q_j}(\hat{u}_1\hat{u}_2)=\Delta_{q_j}(\hat{u}_1)\hat u_2+\hat{u}_1\Delta_{q_j}(\hat{u}_2)+\sum_{1\leq l,k\leq n_{\Delta}}c^{(j)}_{lk}\Delta_{q_l}(\hat{u}_1)\Delta_{q_k}(\hat{u}_2),
\ \ \ \ j=1,\dots,n_{\Delta},$$ 
for some coefficients $c^{(j)}_{lk}\in\mathbb{C}$ depending only on $l,k,j,\Delta_Q$.\ \\
    If $\Delta_Q$ is a collection satisfying the Leibniz-like formula, then, recursively, for any $\alpha\in\mathbb{N}_0^{n_{\Delta}}$, and 
$\hat{u}_1,\hat{u}_2\in\mathcal{F}(\mathcal{D}'(G))$ one has
$$\Delta_Q^{\alpha}(\hat{u}_1\hat{u}_2)=\sum_{|\alpha_1|,|\alpha_2|\leq|\alpha|\leq|\alpha_1|+|\alpha_2|}c^{\alpha}_{\alpha_1,\alpha_2}\Delta_Q^{\alpha_1}(\hat{u}_1)\Delta_Q^{\alpha_2}(\hat{u}_2),$$ 
for some coefficients $c_{\alpha_1,\alpha_2}^{\alpha}\in\mathbb{C}$, depending on $\alpha_1,\alpha_2,\alpha,\Delta_Q$, with $c^{\alpha}_{\alpha,0}=c^{\alpha}_{0,\alpha}=1$.
\end{definition}

The strong admissible collection $\Delta_Q$ relative to the family $Q=\{q_{ij}=\xi_{ij}-\delta_{ij}\}_{1\leq i,j\leq d_{\xi},\xi\in \widehat{G}}$ satisfies the Leibniz-like property.
    
\begin{definition}For each $\tau,\xi\in\mathrm{Rep}(G)$ we define the linear mapping $\Delta_{\tau}\hat{f}(\xi)$ on $\mathcal{H}_{\tau}\otimes\mathcal{H}_{\xi}$ by 
$$\Delta_{\tau}\hat{f}(\xi)=\hat{f}(\tau\otimes\xi)-\hat{f}(I_{d_{\tau}}\otimes\xi),\ \ \ \hat{f}\in\mathcal F(\mathcal D'(G)).$$
The restriction of $\Delta_{\tau}\hat{f}(\xi)$ to any occurrence of $\rho\in\widehat{G}$ in the decomposition into irreducibles of $\tau\otimes\xi$ defines the same mapping  on $\mathcal{H}_{\rho}$, while the restriction to any $\rho\in\widehat{G}$ not appearing in the decomposition of $\tau\otimes\xi$ is fixed to be zero. 
The operator $\Delta_{\tau}$ is called the difference operator associated with $\tau\in\mathrm{Rep}(G)$.
\end{definition}

Recall that $\hat f(\tau\otimes\xi)$ should be viewed as an element in $\mathrm{End}(\mathcal H_{\tau\otimes\xi})$. Notice that in general $\tau\otimes\xi$ is not irreducible as a representation of $G$, that is $\mathcal H_{\tau\otimes\xi}=\bigoplus_{\rho\in\widehat G}\mathcal H_{\rho}^{m_{\rho}}$, where $m_{\rho}\in\mathbb N_0$. 
    
\begin{definition}\label{defdiff}Let $G=G_1\times G_2$ be a product of compact Lie groups, with $n_i:=\mathrm{dim}(G_i)$ for $ i=1,2$ and $e=(e_1,e_2)$ the neutral element of $G$. A collection of $n_P:=n_{\Delta_P}\geq n_1$ 
difference operators $\Delta_{p_1},\cdots,\Delta_{p_{n_P}}\in\mathrm{diff}^1(\widehat{G})$ is called \textit{admissible} relative to $G_1$ if the corresponding functions 
$p_1,\cdots,p_{n_P}\in C^{\infty}(G)$ are such that \begin{enumerate}
    \item $p_1(e)=\cdots =p_{n_P}(e)=0$,
    \item $dp_j(e)\neq 0$ for all $j=1,\cdots,n_P$,
    \item $\mathrm{rank}(dp_1(e),\cdots,dp_{n_P}(e))=n_1$.
\end{enumerate}  
The collection is said to be \textit{strongly admissible} relative to $G_1$ if $\bigcap_{j=1}^{n_{P}}\{x\in G; p_j(x)=0\}=\{e_1\}\times G_2.$ By reversing the roles of $G_1$ and $G_2$, 
(strong) admissible collections relative to $G_2$ are defined.
\end{definition}

We consider the family of functions  
$$P=\{p_{ij}^{\tau}=(\tau_1\otimes I_{d_{\tau_2}}-I_{d_{\tau}})_{ij};\ \ 1\leq i,j\leq d_{\tau}, \ \ \tau=\tau_1\otimes\tau_2\in\widehat{G}\},$$ $$R=\{r_{ij}^{(\tau)}=(I_{d_{\tau_1}}\otimes\tau_2-I_{d_{\tau}})_{ij};\ \ 1\leq i,j\leq d_{\tau},\ \ \tau=\tau_1\otimes\tau_2\in\widehat{G}\},$$ so that $\Delta_P$ and $\Delta_R$ are strongly admissible collections relative to $G_1\cong G_1\times\{e_2\}\subset G$ and $G_2\cong\{e_1\}\times G_2\subset G$, respectively.
    
\begin{remark}\label{remdiff}
From the families $P$ and $R$, we may select a finite subfamily (by using fundamental representations, which are finitely many for a compact group, see \cite{F}) and, 
calling them again $P$ and $R$, we have
$$P=\{p_k,k=1,\cdots,n_P\}, \ \ R=\{r_k,k=1,\cdots, n_R\},$$
where $p_k,r_k$ are of the form $p_{ij}^{(\tau)},r^{(\tau)}_{ij}$, respectively, for some $\tau\in\widehat{G},i,j\in\{1,\cdots,d_{\tau}\}$.
\end{remark} 

Notice that the functions $p_k,$ for $k=1,\cdots,n_{P}$, are independent of $x_2\in G_2$ and, similarly, the functions $r_k,$ for $k=1,\cdots,n_R$, are independent of $x_1\in G_1$. 
We may define 
$$\Delta^{\alpha,\beta}:=\Delta^{\alpha}_P\Delta^{\beta}_R=\Delta^{\alpha_1}_{p_1}\cdots\Delta^{\alpha_{n_P}}_{p_{n_P}}\Delta^{\beta_1}_{r_1}\cdots\Delta^{\beta_{n_R}}_{r_{n_R}}.$$
By applying the Leibniz formula in Definition \ref{Leibnizdef} separately in the two variables, we obtain the following property for the difference operators $\Delta^{\alpha_1,\alpha_2}$.
\begin{proposition}\label{Leibniz}
Let $G_1,G_2$ be compact Lie groups and $G=G_1\times G_2$. Then, for every $(\alpha_1,\alpha_2)\in\mathbb{N}_0^{n_{P}}\times\mathbb{N}_0^{n_R}$ and for all $\hat{u}_1,\hat{u}_2\in\mathcal{F}(\mathcal{D}'(G))$, 
we have 
$$\Delta^{\alpha_1,\alpha_2}(\hat{u}_1\hat{u}_2)=\sum_{\begin{matrix}_{|\beta_1|,|\gamma_1|\leq|\alpha_1|\leq|\beta_1|+|\gamma_1|}\\ _{|\beta_2|,|\gamma_2|\leq |\alpha_2|\leq |\beta_2|+|\gamma_2| }\end{matrix}}c^{\alpha_1}_{\beta_1,\gamma_1}c^{\alpha_2}_{\beta_2,\gamma_2}\Delta^{\beta_1,\beta_2}(\hat{u}_1)\Delta^{\gamma_1,\gamma_2}(\hat{u}_2),$$ 
for some coefficients $c_{\beta_1,\gamma_1}^{\alpha_1},c^{\alpha_2}_{\beta_2,\gamma_2}\in\mathbb{C}$, such that $c^{\alpha_1}_{\alpha_1,0}=c^{\alpha_1}_{0,\alpha_1}=c^{\alpha_2}_{\alpha_2,0}=c^{\alpha_2}_{0,\alpha_2}=1$.
\end{proposition}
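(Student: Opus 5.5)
The strategy is to apply the Leibniz-like property of Definition \ref{Leibnizdef} twice: once for the operators of $\Delta_R$, then once for those of $\Delta_P$. Two facts are needed first.

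The first is that the difference operators commute with one another. Indeed, $\Delta_{q}\hat u=\widehat{qu}$ is conjugation by the Fourier transform of multiplication by the function $q$. Multiplication operators commute, so $\Delta^{\alpha_1,\alpha_2}=\Delta_P^{\alpha_1}\Delta_R^{\alpha_2}=\Delta_R^{\alpha_2}\Delta_P^{\alpha_1}$, and any order of composition can be used.

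The second is that each of the collections $\Delta_P$ and $\Delta_R$ separately satisfies a Leibniz-like identity of the form in Definition \ref{Leibnizdef}, for products $\hat u_1\hat u_2$ with $u_1,u_2\in\mathcal D'(G)$. I would derive this from the matrix-coefficient structure of the $p_k$. Write $\tau=\tau_1\otimes\tau_2$ and $p^{(\tau)}=\tau_1\otimes I-I$. Then $\Delta_{p^{(\tau)}_{ij}}\hat u$ is a matrix entry of
$$\hat u\big((\tau_1\otimes I_{d_{\tau_2}})\otimes\xi\big)-\hat u\big(I_{d_\tau}\otimes\xi\big),$$
exactly as in the single-group case with the representation $\tau_1\otimes 1_{G_2}$. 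Hence the Leibniz identity for $\Delta_P$ follows from the identity $\tau_1(x)-I=(\tau_1(x)-I)\cdot I$, combined with the multiplicativity $\tau_1(xy)=\tau_1(x)\tau_1(y)$. This is the same computation that gives the Leibniz-like property for $q_{ij}=\xi_{ij}-\delta_{ij}$: expand
$$\tau_1(x)\tau_1(y)-I=(\tau_1(x)-I)+(\tau_1(y)-I)+(\tau_1(x)-I)(\tau_1(y)-I).$$
The result has the form
$$\Delta_{p_j}(\hat u_1\hat u_2)=\Delta_{p_j}\hat u_1\,\hat u_2+\hat u_1\,\Delta_{p_j}\hat u_2+\sum_{l,k}c^{(j)}_{lk}\,\Delta_{p_l}\hat u_1\,\Delta_{p_k}\hat u_2 .$$
Iterating gives the $P$-analogue of the recursive formula,
$$\Delta^{\alpha_1}_P(\hat v_1\hat v_2)=\sum c^{\alpha_1}_{\beta_1,\gamma_1}\Delta_P^{\beta_1}\hat v_1\,\Delta_P^{\gamma_1}\hat v_2 ,$$
with the stated constraints on the indices and $c^{\alpha_1}_{\alpha_1,0}=c^{\alpha_1}_{0,\alpha_1}=1$. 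The same argument applies to $R$. The closure of the index set under these products is the step I expect to need the most care. The entries of $\tau_1\otimes I$ are again among the selected family, up to the finite selection of Remark \ref{remdiff}. If that selection does not close exactly, I would instead work with the full families $P,R$ before restricting.

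With these two facts the proof is mechanical. First expand the $R$-part:
$$\Delta_R^{\alpha_2}(\hat u_1\hat u_2)=\sum c^{\alpha_2}_{\beta_2,\gamma_2}\,\Delta_R^{\beta_2}\hat u_1\,\Delta_R^{\gamma_2}\hat u_2 .$$
Next apply $\Delta_P^{\alpha_1}$ to each term. Each factor $\Delta_R^{\beta_2}\hat u_1=\widehat{r^{\beta_2}u_1}$ is again in $\mathcal F(\mathcal D'(G))$, so the $P$-Leibniz formula applies with $\hat v_1=\Delta_R^{\beta_2}\hat u_1$ and $\hat v_2=\Delta_R^{\gamma_2}\hat u_2$. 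Finally, use commutativity to rewrite $\Delta_P^{\beta_1}\Delta_R^{\beta_2}=\Delta^{\beta_1,\beta_2}$. This yields exactly the double sum with product coefficients $c^{\alpha_1}_{\beta_1,\gamma_1}c^{\alpha_2}_{\beta_2,\gamma_2}$. The normalization $c=1$ at the extreme indices is inherited from each factor.
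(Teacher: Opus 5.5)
Your proposal is correct and follows the paper's route. The paper's entire argument is to apply the one-group Leibniz-like formula of Definition \ref{Leibnizdef} separately in the two variables, and your expansion first in $\Delta_R$ and then in $\Delta_P$, joined by the commutativity of the difference operators, is that argument written out; you also supply what the paper leaves implicit, namely that $\Delta_P$ and $\Delta_R$ each satisfy the Leibniz-like property via the expansion of $\tau_1(xy)-I$, and your closure worry is harmless because the selected functions are all the entries of $\tau_1-I$ (pulled back to $G$) for the chosen representations, so the cross terms $p_{ik}(x)p_{kj}(y)$ stay within the family.
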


Since the families $P$ and $R$ define admissible collections of difference operators on $\widehat{G}$ relative to $G_1$ and $G_2$ respectively, we may find a family of differential operators 
$\partial^{\alpha,\beta}_x:=\partial^{\alpha}_{x_1}\partial^{\beta}_{x_2},$
such that the following Taylor's formula holds 
$$f(x)=\sum_{|\alpha|<N}\sum_{|\beta|<N}\frac{1}{\alpha!\beta!}q^{\alpha,\beta}(x^{-1})\partial_x^{\alpha,\beta}f(e)+
\sum_{|\alpha+\beta|=N, |\alpha|\geq N\vee |\beta|\geq N}\frac{1}{\alpha!\beta!}q^{\alpha,\beta}(x^{-1})f_{\alpha,\beta}(x), \ \ \ x\in G,$$ 
for each $f\in C^{\infty}(G)$, where $q^{\alpha,\beta}(x):=p(x)^{\alpha}r(x)^{\beta}=p_1(x)^{\alpha_1}\cdots p_{n_P}(x)^{\alpha_{n_P}}r_1(x)^{\beta_1}\cdots r_{n_R}(x)^{\beta_{n_R}}$.\\
The differential operators $\partial_x^{\alpha,\beta}$ are chosen to satisfy $\partial_{x_1}^{\alpha}(p(x)^{\alpha})=\partial_{x_2}^{\beta}(r(x)^{\beta})=1$ for all 
$\alpha,\beta$, such that $|\alpha|=|\beta|=1$. We fix as the basis of the Lie algebra $\mathfrak{g}$ the dual basis 
$(\partial_{x_{1,1}},\cdots,\partial_{x_{1,n_1}},\partial_{x_{2,1}},\cdots,\partial_{x_{2,n_2}})$ of $(dp_1(e),\cdots,dp_{n_1}(e),dr_1(e),\cdots,dr_{n_2}(e))$, 
where the choice of $(p_1,\cdots,p_{n_1},r_1,\cdots,r_{n_2})$ is made in such a way that $\mathrm{rank}(dp_1(e),\cdots,dp_{n_1}(e))=n_1$ and $\mathrm{rank}(dr_1(e),\cdots,dr_{n_2}(e))=n_2$.\ \\
The above formula can be derived by applying Taylor's formula twice, that is, first with respect to the variable $x_1$ using the functions $q^{\alpha,0}(x)=q^{\alpha,0}(x_1)=p(x_1)^{\alpha}$, 
and then expanding again with respect to $x_2$ and using $q^{0,\beta}(x)=q^{0,\beta}(x_2)=r(x_2)^{\beta}$.

\section{Bisingular symbols on \texorpdfstring{$G=G_1\times G_2$}{G=G1xG2}}\label{sec3}
Here, following \cite{FP}, we define the so-called $(\rho,\delta)$-classes of \textit{bisingular} pseudodifferential symbols on the product of compact Lie groups, where $0\leq \delta\leq \rho\leq 1$. The classes we introduce here represent the nontrivial generalization of the $(1,0)$-classes defined in \cite{FP}. The key role of the general $(\rho,\delta)$-classes becomes evident especially when dealing with problems related to hypoellipticity and with the existence of parametrices. Indeed, even in the Euclidean case, parametrices usually belong to $(\rho,\delta)$-classes with $(\rho,\delta)\neq (1,0)$. Corresponding to these classes, we will develop a  pseudodifferential calculus in Section \ref{sec5}.

Recall that $x=(x_1,x_2)$ is an element of $G=G_1\times G_2$, and $\xi=\xi_1\otimes\xi_2$ is an element of $\widehat{G}$, where $\xi_i\in\widehat{G_i},$ for $ i=1,2$. 
Using the definitions above and fixing the families $P$ and $R$, we define $\Delta_1^{\alpha}:=\Delta_P^{\alpha},\ \Delta_2^{\beta}:=\Delta_{R}^{\beta},$ 
$\partial_1^{\alpha}:=\partial^{\alpha}_{x_1}=\partial_{x_{1,1}}^{\alpha_1}\cdots\partial_{x_{1,n_1}}^{\alpha_{n_1}}$ and $\partial^{\beta}_{2}:=\partial^{\beta}_{x_2}
=\partial^{\beta_1}_{x_{2,1}}\cdots\partial^{\beta_{n_2}}_{x_{2,n_2}}$ as above. We set $\partial^{\alpha,\beta}:=\partial_{x_1}^{\alpha}\partial_{x_2}^{\beta}$, and, analogously, for the difference operators, 
$\Delta^{\alpha,\beta}=\Delta_{1}^{\alpha}\Delta_2^{\beta}$.\ \\
Recall also that, given a continuous linear operator $A:C^{\infty}(G)\rightarrow\mathcal{D}'(G)$, its matrix-valued symbol $\sigma_A(x,\xi)\in\mathbb{C}^{d_{\xi}\times d_{\xi}}$ is given by 
$$\sigma_A(x,\xi)=\xi^*(x)(A\xi)(x),$$ 
and the equality 
$$Af(x)=\sum_{\xi\in\widehat{G}}d_{\xi}\mathrm{Tr}(\xi(x)\sigma_A(x,\xi)\hat{f}(\xi)),\ \ \ f\in C^{\infty}(G),$$ 
holds in the sense of distributions.

Now we are ready to give the definition of bisingular $(\rho,\delta)$-classes of symbols.

\begin{definition}
Let $(m_1,m_2)\in\mathbb{R}^2$ and let $\rho=(\rho_1,\rho_2),\delta=(\delta_1,\delta_2)\in\mathbb{R}^2$ be such that $0\leq\delta_i\leq\rho_i\leq 1$ for $i=1,2$.
The class of bisingular symbols of (bi)order  $(m_1,m_2)$ with respect to the parameters $\rho,\delta$, is the set $S^{m_1,m_2}_{\rho,\delta}(G\times\widehat{G})$ of all functions 
$a:G\times\widehat{G}\rightarrow\bigcup_{[\xi]\in\widehat{G}}\mathbb{C}^{d_{\xi}\times d_{\xi}}$ that are smooth in $x\in G$, and such that, for all multi-indices  
$(\alpha_1,\alpha_2)\in\mathbb{N}_0^{n_P}\times\mathbb{N}_0^{n_R}$ and $(\beta_1,\beta_2)\in\mathbb{N}_0^{n_1}\times\mathbb{N}_0^{n_2}$,
$$\lVert\partial_{x_1}^{\beta_1}\partial_{x_2}^{\beta_2}\Delta_{1}^{\alpha_1}\Delta_2^{\alpha_2}a(x_1,x_2,\xi_1,\xi_2)\rVert_{\mathcal{L}(\mathcal{H}_{\xi})}
\leq C_{\alpha_1,\alpha_2,\beta_1,\beta_2}\langle\xi_1\rangle^{m_1-\rho_1|\alpha_1|+\delta_1|\beta_1|}\langle\xi_2\rangle^{m_2-\rho_2|\alpha_2|+\delta_2|\beta_2|},$$ where $\lVert\cdot\rVert_{\mathcal L(\mathcal H_{\xi})}$ is the operator norm on the representation space $\mathcal H_{\xi}$ relative to $\xi$.
A symbol is said to be smoothing when it belongs to $S^{-\infty,-\infty}(G\times\widehat{G}):=\bigcap_{(m_1,m_2)\in\mathbb{R}^2}S^{m_1,m_2}_{\rho,\delta}(G\times\widehat{G})$. 
It is easy to check that $S^{-\infty,-\infty}(G\times\widehat{G})$ does not depend on $\rho$ or $\delta$.

For symbols valued into a (finite dimensional) vector space $V$ we shall use the notation $S^{m_1,m_2}_{\rho,\delta}(G\times\widehat{G};V)$. (Hence, $S^{m_1,m_2}_{\rho,\delta}(G\times\widehat{G};V)=S^{m_1,m_2}_{\rho,\delta}(G\times\widehat{G})\otimes V$.)
\end{definition}

The space $S^{m_1,m_2}_{\rho,\delta}(G\times\widehat{G})$ is a Fréchet space equipped with the following family of seminorms

\begin{align*}
\lVert \sigma\rVert_{S^{m_1,m_2}_{\rho,\delta},a,b}:=\max_{\begin{matrix}_{
    |\alpha_1|\leq a_1,|\alpha_2|\leq a_2,}\\_{ |\beta_1|\leq b_1, |\beta_2|\leq b_2}
\end{matrix}}
\sup_{(x,\xi)\in G\times\widehat{G}}\langle\xi_1&\rangle^{-m_1+\rho_1|\alpha_1|-\delta_1|\beta_1|}\langle\xi_2\rangle^{-m_2+\rho_2|\alpha_2|-\delta_2|\beta_2|}\times \\
&\hspace{3cm}
\times \lVert\partial_{x_1}^{\beta_1}\partial_{x_2}^{\beta_2}\Delta_1^{\alpha_1}\Delta_2^{\alpha_2}\sigma(x,\xi)\rVert_{\mathcal{L}(\mathcal{H}_{\xi})},
\end{align*}
where $a=(a_1,a_2),b=(b_1,b_2)\in\mathbb{N}^2_0$.
If $x\in G$ is fixed, we will use the following notation
\begin{align*}
\lVert \sigma(x,\cdot)\rVert_{S^{m_1,m_2}_{\rho,\delta},a,b}:=\max_{\begin{matrix}_{
    |\alpha_1|\leq a_1,|\alpha_2|\leq a_2,}\\_{ |\beta_1|\leq b_1, |\beta_2|\leq b_2}
\end{matrix}}
\sup_{\xi\in\widehat{G}}\langle\xi_1&\rangle^{-m_1+\rho_1|\alpha_1|-\delta_1|\beta_1|}\langle\xi_2\rangle^{-m_2+\rho_2|\alpha_2|-\delta_2|\beta_2|}\times
\\ &\hspace{3cm}\times\lVert\partial_{x_1}^{\beta_1}\partial_{x_2}^{\beta_2}\Delta_1^{\alpha_1}\Delta_2^{\alpha_2}\sigma(x,\xi)\rVert_{\mathcal{L}(\mathcal{H}_{\xi})}.\end{align*}
It is easy to check that if $m_i\leq m'_i,\  \rho_i\geq \rho'_i$ and $\delta_i\leq\delta'_i$, (where $0\leq\delta_i\leq\rho_i\leq 1$ and $0\leq\delta'_i\leq\rho'_i\leq 1$) for $i=1,2$, then the inclusion
$S^{m_1,m_2}_{\rho,\delta}(G\times\widehat{G})\subset S^{m'_1,m'_2}_{\rho',\delta'}(G\times\widehat{G})$ is true, where $\rho=(\rho_1,\rho_2),\rho'=(\rho'_1,\rho'_2),\delta=(\delta_1,\delta_2),\delta'=(\delta'_1,\delta'_2).$
\medskip

In the following proposition, we state some elementary properties of bisingular $(\rho,\delta)$-classes.
\begin{proposition}
Bisingular $(\rho,\delta)$-classes satisfy the following properties:
\begin{enumerate}
\item If $\sigma\in S^{m_1,m_2}_{\rho,\delta}(G\times\widehat{G})$, then for any 
$\alpha=(\alpha_1,\alpha_2)\in\mathbb{N}_0^{n_P}\times\mathbb{N}_0^{n_R}$ and $\beta=(\beta_1,\beta_2)\in \mathbb{N}_0^{n_1}\times\mathbb{N}_0^{n_2}$, 
the symbol $\partial_x^{\beta}\Delta^{\alpha}\sigma$ belongs to the class $S^{m_1-\rho_1|\alpha_1|+\delta_1|\beta_1|,m_2-\rho_2|\alpha_2|+\delta_2|\beta_2|}_{\rho,\delta}(G\times\widehat{G})$. 
Moreover, for every $a=(a_1,a_2),b=(b_1,b_2)\in\mathbb{N}_0^2$,    
$$\lVert \partial_x^{\beta}\Delta^{\alpha}\sigma\rVert_{S^{m_1-\rho_1|\alpha_1|+\delta_1|\beta_1|,m_2-\rho_2|\alpha_2|+\delta_2|\beta_2|}_{\rho,\delta},a,b}\lesssim_{a,b,\alpha,\beta,m}
\lVert\sigma\rVert_{S_{\rho,\delta}^{m_1,m_2},(a_1+|\alpha_1|,a_2+|\alpha_2|),(b_1+|\beta_1|,b_2+|\beta_2|)}.$$
\item If $\sigma\in S_{\rho,\delta}^{m_1,m_2}(G\times\widehat{G})$, then the symbol $\,^t\bar\sigma=\sigma^*$ belongs to $S^{m_1,m_2}_{\rho,\delta}(G\times\widehat{G})$ 
and for every $a=(a_1,a_2),b=(b_1,b_2)\in\mathbb{N}_0^2$,  
$$\lVert\sigma^*\rVert_{S^{m_1,m_2}_{\rho,\delta},a,b}=\lVert\sigma\rVert_{S^{m_1,m_2}_{\rho,\delta},a,b}.$$
\item If $\sigma_1\in S^{m_1,m_2}_{\rho,\delta}(G\times\widehat{G})$ and $\sigma_2\in S^{m'_1,m_2'}_{\rho,\delta}(G\times\widehat{G})$, then the symbol $\sigma=\sigma_1\sigma_2$ belongs to 
$S^{m_1+m'_1,m_2+m'_2}_{\rho,\delta}(G\times\widehat{G})$ and, for all $a=(a_1,a_2),b=(b_1,b_2)\in\mathbb{N}_0^2$, 
$$\lVert\sigma\rVert_{S^{m_1+m'_1,m_2+m'_2}_{\rho,\delta},a,b}\lesssim_{a,b,m_i,m_i'}\lVert\sigma_1\rVert_{S^{m_1,m_2}_{\rho,\delta},a,b} \lVert\sigma_2\rVert_{S^{m'_1,m'_2}_{\rho,\delta},a,b}.$$
\end{enumerate}
\end{proposition}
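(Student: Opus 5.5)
We prove the three items directly from the definition of $S^{m_1,m_2}_{\rho,\delta}(G\times\widehat{G})$ and from Proposition~\ref{Leibniz}. The basic structural facts are these. Left-invariant derivatives $\partial_{x}^{\beta}$ act only in $x$. Difference operators $\Delta^{\alpha}$ act only in $\xi$, since they are defined through multiplication of the kernel by $q^{\alpha}$ in the convolution variable. Consequently $\partial_x^{\beta}$ and $\Delta^{\alpha}$ commute, and the compositions $\Delta_1^{\gamma_1}\Delta_1^{\alpha_1}$, $\Delta_2^{\gamma_2}\Delta_2^{\alpha_2}$ are again difference operators, associated with the products $p^{\gamma_1}p^{\alpha_1}=p^{\gamma_1+\alpha_1}$ and $r^{\gamma_2+\alpha_2}$. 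The same holds for the left-invariant derivatives, up to reordering of the basis fields. Reordering the noncommuting fields $\partial_{x_{i,j}}$ only produces lower order terms, by commutator relations with constant structure coefficients. Since $\delta_i\ge0$, these lower order terms are controlled by the same weights.

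\emph{Item (1).} Fix $(\gamma,\eta)$ with $|\gamma_i|\le a_i$ and $|\eta_i|\le b_i$. Then
\[
\partial_x^{\eta}\Delta^{\gamma}\bigl(\partial_x^{\beta}\Delta^{\alpha}\sigma\bigr)=\partial_x^{\eta+\beta}\Delta^{\gamma+\alpha}\sigma
\]
modulo lower order reorderings. Its operator norm is bounded by
\[
\lVert\sigma\rVert_{S^{m_1,m_2}_{\rho,\delta},(a_1+|\alpha_1|,a_2+|\alpha_2|),(b_1+|\beta_1|,b_2+|\beta_2|)}\;\prod_{i=1,2}\langle\xi_i\rangle^{m_i-\rho_i(|\gamma_i|+|\alpha_i|)+\delta_i(|\eta_i|+|\beta_i|)}.
\]
This is exactly the weight for the shifted order $m_i-\rho_i|\alpha_i|+\delta_i|\beta_i|$ with indices $\gamma,\eta$, which gives the estimate.

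\emph{Item (2).} Use $\partial_x^{\beta}(\sigma^*)=(\partial_x^{\beta}\sigma)^*$, since the vector fields are real. For differences, $\Delta_q(\hat f)^*=\Delta_{\tilde q}(\hat f^*)$, where $\tilde q(x)=\overline{q(x^{-1})}$. Because the families $P,R$ consist of matrix entries of unitary representations, we have $\overline{p^{\tau}_{ij}(x^{-1})}=p^{\tau}_{ji}(x)$, and similarly for $r$. Hence $\Delta^{\alpha}(\sigma^*)=(\Delta^{\alpha'}\sigma)^*$, where $\alpha'$ is a permutation of $\alpha$ with the same length. Since $\lVert A^*\rVert_{\mathcal L}=\lVert A\rVert_{\mathcal L}$, the seminorms coincide.

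This step must be checked carefully, and I expect it to be the main subtlety. One needs the finite subfamilies of Remark~\ref{remdiff} to be closed under $(i,j)\mapsto(j,i)$. This can be arranged by the choice of fundamental representations. If such closure fails, the identity degrades to an inequality of seminorms, with constants coming from expressing $\tilde q$ through the admissible family.

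\emph{Item (3).} Apply the Leibniz rule for left-invariant derivatives in $x$ together with Proposition~\ref{Leibniz}, obtaining
\[
\partial_x^{\eta}\Delta^{\gamma}(\sigma_1\sigma_2)=\sum c\,\bigl(\partial_x^{\eta'}\Delta^{\beta_1,\beta_2}\sigma_1\bigr)\bigl(\partial_x^{\eta''}\Delta^{\gamma_1',\gamma_2'}\sigma_2\bigr),
\]
with $\eta'+\eta''=\eta$, $|\beta_i|+|\gamma'_i|\ge|\gamma_i|$ and $|\beta_i|,|\gamma'_i|\le|\gamma_i|$. Submultiplicativity of the operator norm then bounds each term by
\[
\lVert\sigma_1\rVert_{a,b}\,\lVert\sigma_2\rVert_{a,b}\prod_{i=1,2}\langle\xi_i\rangle^{m_i+m_i'-\rho_i(|\beta_i|+|\gamma'_i|)+\delta_i|\eta_i|}.
\]
Since $\rho_i\ge0$ and $|\beta_i|+|\gamma'_i|\ge|\gamma_i|$, this is at most the target weight. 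Only multi-indices bounded by $a,b$ appear, so the seminorms $(a,b)$ suffice.
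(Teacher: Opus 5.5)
Your argument is correct, and it is exactly the routine verification the paper relies on. The paper states this proposition without proof as "elementary", and your ingredients are the right ones: for item (1), $\partial_x$ commutes with $\Delta$, $\Delta^{\gamma}\Delta^{\alpha}=\Delta^{\gamma+\alpha}$, and the only reordering is of the non-commuting $x$-derivatives within each factor, which is harmless because $\delta_i\ge 0$; for item (2), the identity $\overline{p^{\tau}_{ij}(x^{-1})}=p^{\tau}_{ji}(x)$ gives $\Delta^{\alpha}(\sigma^*)=(\Delta^{\alpha'}\sigma)^*$ with $|\alpha'_i|=|\alpha_i|$; for item (3), you use the two Leibniz rules, noting that the ordinary Leibniz rule in $x$ preserves ordered monomials. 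The transposition-closure you flag in item (2) is automatic if, as is natural in Remark~\ref{remdiff}, one keeps all matrix entries $(\tau_1)_{kl}-\delta_{kl}$ (respectively $(\tau_2)_{kl}-\delta_{kl}$) of each selected fundamental representation, so the seminorm identity holds with equality.
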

To each matrix-valued symbol $a\in S^{m_1,m_2}_{\rho,\delta}(G\times \widehat{G})$ we associate an operator $\mathrm{Op}(a)$ by means of the following quantization formula
$$\mathrm{Op}(a)f(x):=\sum_{\xi\in\widehat{G}}d_{\xi}\mathrm{Tr}\Bigl(\xi(x)a(x, \xi)\hat{f}(\xi)\Bigr)\hspace{6cm}$$
$$\hspace{4cm}=
\sum_{\xi=\xi_1\otimes\xi_2\in\widehat{G}}d_{\xi_1}d_{\xi_2}\mathrm{Tr}\Bigl((\xi_1\otimes\xi_2)(x)a(x,\xi_1,\xi_2)\hat{f}(\xi_1\otimes\xi_2)\Bigr),$$ 
where $f\in C^{\infty}(G)$. We denote by $L^{m_1,m_2}_{\rho,\delta}(G)$ the class of operators
obtained by quantizing symbols in the class $S^{m_1,m_2}_{\rho,\delta}(G\times\widehat{G})$. These operators are called
bisingular operators of bi-order $(m_1,m_2)\in\mathbb{R}^2$ and parameters $\rho=(\rho_1,\rho_2),\delta=(\delta_1,\delta_2)$ on $G=G_1\times G_2$.\\
Moreover, with each symbol $a\in S^{m_1,m_2}_{\rho,\delta}(G\times\widehat{G})$, we associate the following maps
$$G_1\times\widehat G_1\ni (x_1,\xi_1)\mapsto a(x_1,x_2,\xi_1,D_2)\in L^{m_2}_{\rho_2,\delta_2}(G_2),$$
$$G_2\times \widehat G_2\ni(x_2,\xi_2)\mapsto a(x_1,x_2,D_1,\xi_2)\in L^{m_1}_{\rho_1,\delta_1}(G_1),$$
where $L^{m_1}_{\rho_1,\delta_1}(G_1)$ and $L^{m_2}_{\rho_2,\delta_2}(G_2)$ are classes of operators on $G_1$ and $G_2$ respectively, obtained by
means of the quantization formulas
$$a(x_1,x_2,\xi_1,D_2)\psi(x_2)=\sum_{\xi_2\in\widehat G_2}d_{\xi_2}\mathrm{Tr}\Bigl(\bigl(\mathrm{I}_{d_{\xi_1}}\otimes\xi_2(x_2)\bigr)a(x_1,x_2,\xi_1,\xi_2)\,
\bigl(\mathrm{I}_{d_{\xi_1}}\otimes \hat{\psi}(\xi_2)\bigr)\Bigr),\ \ \ \psi\in C^{\infty}(G_2)$$
$$a(x_1,x_2,D_1,\xi_2)\varphi(x_1)=\sum_{\xi_1\in\widehat G_1}d_{\xi_1}\mathrm{Tr}\Bigl(\bigl(\xi_1(x_1)\otimes\mathrm{I}_{d_{\xi_2}}\bigr)a(x_1,x_2,\xi_1,\xi_2)\,
\bigl(\hat{\varphi}(\xi_1)\otimes \mathrm{I}_{d_{\xi_2}}\bigr)\Bigr),\ \ \ \varphi\in C^{\infty}(G_1).$$
The symbol $a\in S^{m_1,m_2}_{\rho,\delta}(G\times\widehat{G})$ is uniquely determined by
one of these maps.\ \\

Given a continuous linear operator $A:C^{\infty}(G)\rightarrow\mathcal{D}'(G)$, its right convolution kernel, denoted by $R_A\in \mathcal{D}'(G\times G)$, is defined via
$$A\varphi(x)=\int_G\varphi(y)R_A(x,y^{-1}x)dy=(R_A(x,\cdot)\ast\varphi)(x).$$ For an operator $A\in L^{m_1,m_2}_{\rho,\delta}(G)$, its symbol $a\in S^{m}_{\rho,\delta}(G\times\widehat{G})$
is the formal Fourier transform of the right convolution kernel associated with $A$, namely, $$a(x,\xi):=(\mathcal{F}_{y\rightarrow \xi}R_A)(x,\xi),$$ and the kernel can be written as 
$$R_A(x,y):=\sum_{\xi\in\widehat{G}}d_{\xi}\mathrm{Tr}\Bigl(\xi(y)a(x,\xi)\Bigr),\ \ \ y=(y_1,y_2)\in G=G_1\times G_2.$$
For fixed $(x_1,\xi_1)\in G_1\times\widehat G_1$, we can write the operator $a(x_1,x_2,\xi_1,D_2)$ as follows
$$a(x_1,x_2,\xi_1,D_2)\varphi(x_2)=(R_a^2(x_1,x_2,\xi_1,\cdot)\ast_{G_2}\varphi)(x_2),$$ 
where $R_a^2$ is the right-convolution kernel obtained by quantizing the second variable only, namely
$$R_a^2(x_1,x_2,\xi_1,y_2):=\sum_{\xi_2\in\widehat{G}_2}d_{\xi_2}\mathrm{Tr}\Bigl((I_{\xi_1}\otimes\xi_2(y_2))a(x_1,x_2,\xi_1,\xi_2)\Bigr).$$
Analogously, for fixed $(x_2,\xi_2)\in G_2\times\widehat G_2,$ the operator $a(x_1,x_2,D_1,\xi_2)$ can be written as
$$a(x_1,x_2,D_1,\xi_2)\varphi(x_1)=(R_a^1(x_1,x_2,\cdot,\xi_2)\ast\varphi)(x_1),$$
where
$$R_a^1(x_1,x_2,y_1,\xi_2)=\sum_{\xi_1\in\widehat G_1}d_{\xi_1}\mathrm{Tr}\Bigl((\xi_1(y_1)\otimes I_{\xi_2})a(x_1,x_2,\xi_1,\xi_2)\Bigr).$$
Since irreducible representations are orthogonal, one easily finds
$$a(x_1,x_2,\xi_1,\xi_2)=\int_{G_1}R_a^1(x_1,x_2,y_1,\xi_2)(\xi_1(y_1)\otimes I_{\xi_2})dy_1\hspace{6cm}$$
$$\hspace{5cm}=\int_{G_2}R_a^2(x_1,x_2,\xi_1,y_2)(I_{\xi_1}\otimes \xi_2(y_2))dy_2.$$

\begin{definition}
Given symbols $a\in S^{m_1,m_2}_{\rho,\delta}(G\times\widehat{G}),b\in S^{m'_1,m'_2}_{\rho,\delta}(G\times\widehat G)$, we will denote by $(a\circ_1 b)(x_1,x_2,\xi_1,\xi_2)$ and 
$(a\circ_2b)(x_1,x_2,\xi_1,\xi_2)$ the symbols in $S^{m_1+m'_1,m_2+m'_2}_{\rho,\delta}(G\times\widehat{G})$ associated with the operators
$$(a\circ_1b)(x_1,x_2,D_1,\xi_2)\varphi(x_1)=a(x_1,x_2,D_1,\xi_2)b(x_1,x_2,D_1,\xi_2)\varphi(x_1),\ \ \ \varphi\in C^{\infty}(G_1),$$
and 
$$(a\circ_2b)(x_1,x_2,\xi_1,D_2)\psi(x_2)=a(x_1,x_2,\xi_1,D_2)b(x_1,x_2,\xi_1, D_2)\psi(x_2),\ \ \ \psi\in C^{\infty}(G_2).$$
\end{definition}

By considering Taylor expansions of right convolution kernels, one shows that
$$(a\circ_1b)(x_1,x_2,\xi_1,\xi_2)\sim\sum_{|\alpha_1|\geq 0}\frac{1}{\alpha_1!}\Delta^{\alpha_1,0}a(x,\xi)\partial^{\alpha_1,0}b(x,\xi),$$
and
$$(a\circ_2b)(x_1,x_2,\xi_1,\xi_2)\sim\sum_{|\alpha_2|\geq 0}\frac{1}{\alpha_2!}\Delta^{0,\alpha_2}a(x,\xi)\partial^{0,\alpha_2}b(x,\xi),$$
which means that for all $N\in\mathbb{N}$, there are suitable $b_{\alpha_1},b_{\alpha_2}\in S^{m'_1,m'_2}_{\rho,\delta}(G\times\widehat{G})$ such that the reminders
\begin{align*}r_N^1(x,\xi):=&(a\circ_1b)(x_1,x_2,\xi_1,\xi_2)-\sum_{|\alpha_1|<N}\frac{1}{\alpha_1!}\Delta^{\alpha_1,0}a(x,\xi)\partial^{\alpha_1,0}b(x,\xi)=\\=&
\sum_{|\alpha_1|=N}\frac{1}{\alpha_1!}\Delta^{\alpha_1,0}a(x,\xi)b_{\alpha_1}(x,\xi)\end{align*} and 
\begin{align*} r_N^2(x,\xi):=&(a\circ_2b)(x_1,x_2,\xi_1,\xi_2)-\sum_{|\alpha_2|<N}\frac{1}{\alpha_2!}\Delta^{0,\alpha_2}a(x,\xi)\partial^{0,\alpha_2}b(x,\xi)=\\=&\sum_{|\alpha_2|=N}\frac{1}{\alpha_2!}\Delta^{0,\alpha_2}a(x,\xi)b_{\alpha_2}(x,\xi)\end{align*} belong to the classes $S^{m_1+m_1'-(N-1)(\rho_1-\delta_1),m_2+m_2'}_{\rho,\delta}(G\times\widehat G)$ and $S^{m_1+m_1',m_2+m_2'-(N-1)(\rho_2-\delta_2)}_{\rho,\delta}(G\times\widehat G)$, respectively.
\medskip

Observe that, for a symbol $a\in S_{\rho,\delta}^{m'_1,m'_2}(G\times\widehat{G})$ and $(x_2,\xi_2)\in G_2\times\widehat G_2$, we have that $\mathrm{Op}(a_{(x_2,\xi_2)})(x_1,D_1)$ $:=a(x_1,x_2,D_1,\xi_2)\in L^{m_1}_{\rho_1,\delta_1}(G_1)$ is an operator on $G_1$, and its adjoint, 
denoted by $\mathrm{Op}(a_{(x_2,\xi_2)})^{*_1}(x_1,D_1):=a(x_1,x_2,D_1,\xi_2)^{*_1}$, is the operator satisfying 
$$(\mathrm{Op}(a_{(x_2,\xi_2)})^{\ast_1}u,v)_{L^2(G_1)}=(u,\mathrm{Op}(a_{(x_2,\xi_2)})v)_{L^2(G_1)},$$ for every $u,v\in\mathcal{D}(G_1).$
Analogously, for all 
$(x_1,\xi_1)\in G_1\times\widehat{G}_1$, we have that the adjoint operator of $\mathrm{Op}(a_{(x_1,\xi_1)})(x_2,D_2):=a(x_1,x_2,\xi_1,D_2)\in L^{m_2}_{\rho_2,\delta_2}(G_2)$, that we denote by $\mathrm{Op}(a_{(x_1,\xi_1)})^{*_2}(x_2,D_2):=a(x_1,x_2,\xi_1,D_2)^{*_2}$, is defined by 
$$(\mathrm{Op}(a_{x_1,\xi_1})^{\ast_2}u,v)_{L^2(G_2)}=(u,\mathrm{Op}(a_{(x_1,\xi_1)})v)_{L^2(G_2)},\ \ \ u,v\in \mathcal{D}(G_2).$$
The symbols of $\mathrm{Op}(a_{(x_2,\xi_2)})^{\ast_1}$ and $\mathrm{Op}(a_{(x_1,\xi_1)})^{\ast_2}$ are denoted, respectively, by $a_{(x_2,\xi_2)}^{(\ast_1)}$ and $a_{(x_1,\xi_1)}^{(\ast_2)}$.\ \\


\noindent\textbf{Sobolev spaces $H^{s_1,s_2}(G)$.}
\medskip

Next, we introduce the Sobolev spaces suitable to our \textit{bisingular} setting. They come into play in several ways, especially in the continuity properties of pseudodifferential operators.
\medskip

Consider the operator $\mathcal L$ on $G=G_1\times G_2$, defined by $$\mathcal L:=(I_1+\mathcal L_{G_1})\otimes(I_2+\mathcal L_{G_2}),$$ where $\mathcal L_{G_i}$ denotes the positive Laplace operator on $G_i$ and $I_i$ is the identity on $G_i$, $i=1,2$. 
The operator $\mathcal L$ is called \textit{biLaplacian} and plays the role of the Laplacian in our setting. The symbol of the biLaplacian is given by 
$$\sigma_{\mathcal L}(\xi)=\sigma_{\mathcal L}(\xi_1\otimes\xi_2)=\langle\xi_1\rangle^2\langle\xi_2\rangle^2I_{d_{\xi}},$$ 
where, for $i=1,2$, the weight $\langle\xi_i\rangle=(1+\lambda_{\xi_i})^{\frac{1}{2}}$ is defined in terms of the eigenvalue $\lambda_{\xi_i}\geq 0$ of $\mathcal L_{G_i}$ relative to the representation 
$\xi_i\in\widehat G_i$, and $I_{d_{\xi}}\in\mathbb{C}^{d_{\xi}\times d_{\xi}}$ is the identity matrix acting on $\mathcal{H}_{\xi}=\mathcal{H}_{\xi_1}\otimes\mathcal{H}_{\xi_2}$.

\begin{definition}[Bisingular Sobolev space of order $(s_1,s_2)$]\ \\
The Sobolev space of order $(s_1,s_2)\in\mathbb{R}^2$ on $G$ is the space 
$$H^{s_1,s_2}(G):=\{u\in\mathcal{D}'(G);(I_1+\mathcal L_{G_1})^{\frac{s_1}{2}}\otimes(I_2+\mathcal L_{G_2})^{\frac{s_2}{2}}u\in L^2(G)\}$$ 
equipped with the norm 
\begin{align*}
\lVert u\rVert_{s_1,s_2}&:=\bigg(\sum_{\xi\in\widehat{G}}d_{\xi}\langle\xi_1\rangle^{2s_1}\langle\xi_2\rangle^{2s_2}\mathrm{Tr}(\hat{u}(\xi)^*\hat{u}(\xi))\biggr)^{\frac{1}{2}}
=\lVert\langle\xi_1\rangle^{s_1}\langle\xi_2\rangle^{s_2}\hat{u}\rVert_{\ell^2(\widehat{G})}\\
&=:\lVert\hat{u}\rVert_{h^{s_1,s_2}(\widehat{G})},
\end{align*}
where 
$$h^{s_1,s_2}(\widehat{G}):=\{\hat{u}\in\mathcal{F}(\mathcal{D}'(G));\langle\xi_1\rangle^{s_1}\langle\xi_2\rangle^{s_2}\hat{u}(\xi)\in\ell^2(\widehat{G})\}$$ 
and $F\in\ell^2(\widehat{G})$ if and only if $\sum_{\xi\in\widehat{G}}d_{\xi}\lVert F(\xi)\rVert^2_{HS}<+\infty$.
\end{definition}
    
The spaces $h^{s_1,s_2}(\widehat{G})$ are complete with respect to the inner product 
$$(u,v)_{s_1,s_2}:=\sum_{\xi\in\widehat{G}}d_{\xi}\langle\xi_1\rangle^{2s_1}\langle\xi_2\rangle^{2s_2}\mathrm{Tr}(\hat{u}(\xi)^*\hat{v}(\xi)).$$ 
Therefore, the Sobolev spaces $H^{s_1,s_2}(G)$ are also complete.

\section{Kernel estimates}\label{sec4}
Before diving into the development of the \textit{bisingular} pseudodifferential calculus, we first have to go through the study of the kernels of bisingular operators. In particular, kernel estimates are at the core of the pseudodifferential calculus on Lie groups. Hence, our scope here will be to derive the estimates satisfied by the kernels that will be repeatedly applied in Section \ref{sec5} to derive the calculus.
\medskip

To begin with, we give a proposition on some general classical properties of pseudodifferential kernels. The proof is omitted since it follows from easy computations.

\begin{proposition}
Let $\sigma=\sigma(x,\xi)\in S^{m_1,m_2}_{\rho,\delta}(G\times\widehat{G})$ be a smooth symbol with an associated kernel 
$\kappa_{\sigma,x}(y)=\mathcal{F}^{-1}_{\xi\mapsto y}\sigma(x,y)$. For each $\alpha=(\alpha_1,\alpha_2)\in\mathbb{N}_0^{n_P}\times\mathbb{N}_0^{n_R}$ and 
$\beta=(\beta_1,\beta_2)\in\mathbb{N}_0^{n_1}\times\mathbb{N}_0^{n_2}$, the symbol $\partial_x^{\beta}\Delta^{\alpha}_{\xi}\sigma(x,\xi)$ belongs to the symbol class 
$S^{m_1-\rho_1|\alpha_1|+\delta_1|\beta_1|, m_2-\rho_2|\alpha_2|+\delta_2|\beta_2|}_{\rho,\delta}(G\times\widehat{G})$  and its associated kernel is 
$\kappa_{\partial^{\beta}\Delta^{\alpha}\sigma,x}(y)=q^{\alpha}(y)\partial^{\beta}_x\kappa_{\sigma,x}(y).$\ \\
The symbol $\sigma^{(*)}(x,\xi)$ of the operator $\mathrm{Op}(\sigma)^*$ belongs to the same class $S^{m_1,m_2}_{\rho,\delta}(G\times\widehat{G})$ and its associated 
kernel is $\kappa_{\sigma^{(*)},x}(y)=\overline{\kappa_{\sigma,x}}(y^{-1})$.\\
If $\sigma_1$ and $\sigma_2$ are smooth symbols with associated kernels $\kappa_{\sigma_1,x}(y)$ and $\kappa_{\sigma_2,x}(y)$, respectively, then the kernel of the symbol 
$\sigma=\sigma_1\sigma_2$ is $\kappa_{\sigma,x}(y)=\kappa_{\sigma_2,x}\ast\kappa_{\sigma_1,x}(y)$.
\end{proposition}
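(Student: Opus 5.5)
The proof splits into three independent computations, one for each assertion. Each relies only on the definition $\kappa_{\sigma,x}(y)=\sum_{\xi\in\widehat G}d_\xi\mathrm{Tr}(\xi(y)\sigma(x,\xi))$, equivalently $\sigma(x,\xi)=\int_G\kappa_{\sigma,x}(y)\xi^*(y)\,dy$. I would work first with symbols for which all these sums converge absolutely, for instance smoothing ones. The general case then follows by distributional duality, since all the identities are linear and continuous in the kernel.

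\emph{Derivatives and differences.} The membership of $\partial_x^\beta\Delta^\alpha\sigma$ in the shifted class is item (1) of the previous proposition on elementary properties. For the kernel, I would use the definition of difference operators, $\Delta_q\hat u=\widehat{qu}$. Applying it with $u=\kappa_{\sigma,x}$ and iterating over the factors of $\Delta^{\alpha}=\Delta^{\alpha_1}_P\Delta^{\alpha_2}_R$ gives $\Delta^\alpha\sigma(x,\cdot)=\mathcal F(q^{\alpha}\kappa_{\sigma,x})$ with $q^\alpha=p^{\alpha_1}r^{\alpha_2}$. This works because the difference operators commute and act by multiplication in the $y$ variable. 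The derivative $\partial_x^\beta$ commutes with $\mathcal F_{y\to\xi}$, since the Fourier transform integrates in $y$ and $x$ is only a parameter. Differentiating under the integral, which is justified by the symbol estimates and Sobolev embedding, gives $\partial_x^\beta\kappa_{\sigma,x}$.

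\emph{Adjoint.} I would write $\mathrm{Op}(\sigma)\varphi(x)=\int_G\varphi(y)\kappa_{\sigma,x}(y^{-1}x)\,dy$ and compute $(\mathrm{Op}(\sigma)\varphi,\psi)$. Swapping variables shows that the Schwartz kernel of the adjoint is $K^*(x,y)=\overline{\kappa_{\sigma,y}(x^{-1}y)}$. Hence the right convolution kernel of the adjoint is $\kappa_{\sigma^{(*)},x}(z)=\overline{\kappa_{\sigma,xz^{-1}}(z^{-1})}$. As an identity of kernels, this is the claimed formula read with the base point shifted. In the $x$-independent case it is exactly $\overline{\kappa_{\sigma,x}}(z^{-1})$, whose Fourier transform is $\sigma(x,\xi)^*$, by $\int\overline{\kappa(z^{-1})}\xi^*(z)dz=(\int\kappa(w)\xi^*(w)dw)^*$.

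The claim that $\sigma^{(*)}$ stays in $S^{m_1,m_2}_{\rho,\delta}$ is the genuinely nontrivial part and the main obstacle. It requires an asymptotic expansion $\sigma^{(*)}\sim\sum_\alpha\frac1{\alpha!}\Delta^\alpha\partial_x^{(\alpha)}\sigma^*$, obtained by Taylor-expanding $x\mapsto\kappa_{\sigma,xz^{-1}}$ at $z=e$ with the bisingular Taylor formula. The remainder must then be controlled by kernel estimates, which is presumably what the rest of Section \ref{sec4} supplies. Here I would simply take the statement as a summary and defer this membership to those estimates.

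\emph{Products.} Since $\widehat{f*g}(\xi)=\hat g(\xi)\hat f(\xi)$ with $f*g(y)=\int f(z)g(z^{-1}y)dz$, the product $\sigma_1\sigma_2(x,\cdot)$ is the Fourier transform of $\kappa_{\sigma_2,x}*\kappa_{\sigma_1,x}$. To check this I would expand $\int\!\!\int\kappa_2(z)\kappa_1(z^{-1}y)\xi^*(y)\,dz\,dy$, substitute $y=zw$, and use $\xi^*(zw)=\xi^*(w)\xi^*(z)$. The order of the factors is the only delicate point.
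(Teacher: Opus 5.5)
Your computations for $\partial_x^{\beta}\Delta^{\alpha}\sigma$ are correct: you iterate $\Delta_q\hat u=\widehat{qu}$, commute $\partial_x^{\beta}$ with $\mathcal F_{y\to\xi}$, and quote item (1) of the elementary-properties proposition. Your product computation, $\widehat{f\ast g}=\hat g\,\hat f$ via $\xi^*(zw)=\xi^*(w)\xi^*(z)$, is also correct. These are exactly the ``easy computations'' the paper has in mind when it omits the proof.

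The adjoint part has a genuine gap, which comes from reading $\sigma^{(*)}$ as the symbol of the operator $\mathrm{Op}(\sigma)^*$.

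\emph{The kernel formula.} In that reading, your own (correct) computation gives the kernel $\overline{\kappa_{\sigma,xz^{-1}}(z^{-1})}$. This is the formula the paper starts from in the proof of the adjoint theorem of Section~\ref{sec5}. It differs from $\overline{\kappa_{\sigma,x}(z^{-1})}$ as soon as $\sigma$ depends on $x$, and that difference is exactly what produces every correction term beyond $\sigma(x,\xi)^*$ in the adjoint expansion. Calling it ``the claimed formula with the base point shifted'' hides the fact that, in this reading, the stated identity is simply false.

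\emph{The class membership.} Membership of the operator-adjoint symbol in $S^{m_1,m_2}_{\rho,\delta}$ does not follow from the kernel estimates of Section~\ref{sec4} alone. It is the content of the adjoint theorem of Section~\ref{sec5}, which requires $\delta_i<\rho_i$. The present proposition allows $\delta_i=\rho_i$, and there, as for the Euclidean class $S^0_{1,1}$, closure under operator adjoints is not to be expected. So the piece you defer cannot be supplied in the stated generality.

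\emph{The missing observation.} The proposition concerns the pointwise adjoint $\sigma^*(x,\xi)=\sigma(x,\xi)^*={}^t\overline{\sigma(x,\xi)}$, as in item (2) of the elementary-properties proposition. This is also how the paper uses it: in the proof of the adjoint theorem it identifies the kernel $q^{\alpha}(y)\partial^{\alpha}\overline{\kappa_{\sigma}}(x,y^{-1})$ with the symbol $\Delta^{\alpha}\partial^{\alpha}\sigma(x,\xi)^*$.
\begin{itemize}
\item For this object, your identity $\int_G\overline{\kappa_{\sigma,x}(z^{-1})}\,\xi^*(z)\,dz=\sigma(x,\xi)^*$ holds for each fixed $x$. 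No $x$-independence is needed, since $x$ is only a parameter.
\item Membership is item (2) of that proposition. Indeed $\overline{p^{(\tau)}_{ij}(y^{-1})}=p^{(\tau)}_{ji}(y)$, and similarly for $r^{(\tau)}_{ij}$. Hence a difference operator applied to $\sigma^*$ is the adjoint of a difference operator of the same order applied to $\sigma$.
\end{itemize}
With this reading, the whole proposition reduces to the elementary computations you already have.
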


Now we state two results, a proposition which is in analogy with Proposition 4.1 in \cite{FP}, and a lemma, which is in analogy with Lemma 6.4 in \cite{F} and Lemma  4.3 in \cite{FP}.
They will be very useful in what comes next, that is in the proof of kernel estimates.
\begin{proposition}\label{prop4.2}
For any $(m_1,m_2)\in\mathbb{R}^2$ and multi-indices $(\alpha_1,\alpha_2)\in\mathbb{N}_0^{n_P}\times\mathbb{N}_0^{n_R}$, there exist $d\in\mathbb{N}_0$ and $C>0$ such that, for all 
$f_1,f_2\in C^d([0,+\infty[), \xi=\xi_1\otimes\xi_2\in\widehat{G},$ and $ t_1,t_2\in]0,1[$, we have 
$$\lVert\Delta^{\alpha_1}_1\Delta^{\alpha_2}_2f_1(t_1\lambda_{\xi_1})f_2(t_2\lambda_{\xi_2})\rVert_{\mathcal{L}(\mathcal{H}_{\xi})}$$
$$\leq C\Bigl(t_1^{\frac{m_1}{2}}\langle\xi_1\rangle^{m_1-|\alpha_1|}\sup_{\lambda_{\xi_1}\geq 0,\ell_1=0,\cdots,d}|\partial^{\ell_1}_{\lambda_{\xi_1}}f_1(\lambda_{\xi_1})|\Bigr)
\Bigl(t_2^{\frac{m_2}{2}}\langle\xi_2\rangle^{m_2-|\alpha_2|}\sup_{\lambda_{\xi_2}\geq 0,\ell_2=0,\cdots,d}|\partial_{\lambda_{\xi_2}}^{\ell_2}f_2(\lambda_{\xi_2})|\Bigr),$$ 
in the sense that if the suprema on the right-hand side are finite, then the left-hand side is also finite, and the inequality holds.
\end{proposition}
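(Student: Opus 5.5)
The plan is to reduce the estimate to the one-group statement, Lemma 6.4 in \cite{F} (and its analogue Proposition 4.1 in \cite{FP}). That lemma says: for any $m\in\mathbb{R}$ and any multi-index $\alpha$ there are $d\in\mathbb N_0$ and $C>0$ such that, for all $f\in C^d([0,+\infty[)$, $\xi\in\widehat{G_i}$ and $t\in]0,1[$,
$$\|\Delta^{\alpha}f(t\lambda_{\xi})\|_{\mathcal L(\mathcal H_\xi)}\le C\,t^{\frac m2}\langle\xi\rangle^{m-|\alpha|}\sup_{\lambda\ge0,\,\ell\le d}|\partial^\ell_\lambda f(\lambda)|.$$
The structural fact to exploit is that the symbol $\sigma(\xi)=f_1(t_1\lambda_{\xi_1})f_2(t_2\lambda_{\xi_2})I_{d_\xi}$ is a tensor product $\bigl(f_1(t_1\lambda_{\xi_1})I_{d_{\xi_1}}\bigr)\otimes\bigl(f_2(t_2\lambda_{\xi_2})I_{d_{\xi_2}}\bigr)$. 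Correspondingly, its convolution kernel on $G=G_1\times G_2$ is $\kappa(y)=\kappa_1(y_1)\kappa_2(y_2)$, where $\kappa_i$ is the kernel on $G_i$ of the spectral multiplier $f_i(t_i\mathcal L_{G_i})$.

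First step: compute $\Delta_1^{\alpha_1}\Delta_2^{\alpha_2}\sigma$. Since the functions $p_k$ depend only on $x_1$ and the $r_k$ only on $x_2$, we have
$$q^{\alpha_1,\alpha_2}(y)\kappa(y)=\bigl(p(y_1)^{\alpha_1}\kappa_1(y_1)\bigr)\bigl(r(y_2)^{\alpha_2}\kappa_2(y_2)\bigr).$$
Taking the Fourier transform on $G$ of this product of functions of separate variables gives
$$\Delta^{\alpha_1,\alpha_2}\sigma(\xi_1\otimes\xi_2)=\widetilde{\Delta}^{\alpha_1}\bigl[f_1(t_1\lambda_{\xi_1})\bigr]\otimes\widetilde{\Delta}^{\alpha_2}\bigl[f_2(t_2\lambda_{\xi_2})\bigr].$$
Here $\widetilde\Delta^{\alpha_1}$ is the difference operator on $\widehat{G_1}$ associated with the restricted functions $p_k|_{G_1}=(\tau_1-I)_{ij}$ coming from $p^\tau_{ij}$. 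The key check is that $\widehat{u_1\otimes u_2}(\xi_1\otimes\xi_2)=\hat u_1(\xi_1)\otimes\hat u_2(\xi_2)$. The $\widetilde\Delta$ are again difference operators built from matrix coefficients of representations of $G_1$ (respectively $G_2$) minus the identity, so they fall within the scope of the one-group lemma.

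Second step: use $\|A\otimes B\|_{\mathcal L(\mathcal H_{\xi_1}\otimes\mathcal H_{\xi_2})}=\|A\|\,\|B\|$. This reduces the estimate to the product of the two one-group estimates. Applying Lemma 6.4 of \cite{F} on $G_1$ with $(m_1,\alpha_1)$ and on $G_2$ with $(m_2,\alpha_2)$, and taking $d$ as the maximum of the two resulting $d$'s and $C$ as the product of the constants, gives exactly the stated inequality. The finiteness clause comes along for free, since each factor is finite whenever the corresponding supremum is.

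The main obstacle is the one-group lemma itself, if one does not simply quote it. Its proof in \cite{F} goes through a Fourier-analytic argument: write $f(t\lambda)$ via the heat semigroup or via almost-analytic extensions, then estimate $q^\alpha\kappa$ in weighted norms, with the $t^{m/2}\langle\xi\rangle^{m}$ scaling coming from $\sup_\lambda (t\lambda)^{m/2}\cdots$. I would cite it rather than reprove it. The only genuinely new point to verify carefully is the tensor-factorization of $\Delta^{\alpha_1,\alpha_2}$. This needs some care because the $p_k$ come from representations $\tau=\tau_1\otimes\tau_2$ of $G$, so their restrictions to $G_1$ must be identified with entries of $\tau_1-I$ tensored with the identity; these are admissible functions on $G_1$ of the required type.
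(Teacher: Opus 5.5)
Your proposal is correct and follows the paper's proof. The paper likewise splits $\|\Delta_1^{\alpha_1}\Delta_2^{\alpha_2}f_1(t_1\lambda_{\xi_1})f_2(t_2\lambda_{\xi_2})\|_{\mathcal L(\mathcal H_\xi)}$ into the product $\|\Delta_1^{\alpha_1}f_1(t_1\lambda_{\xi_1})\|\,\|\Delta_2^{\alpha_2}f_2(t_2\lambda_{\xi_2})\|$ and applies the one-group estimate to each factor; your kernel-level justification of the tensor factorization fills in what the paper states without comment. One citation fix: the one-group result you need is Proposition 6.1 of \cite{F}, whereas Lemma 6.4 of \cite{F} is the $L^2$ lemma that the paper adapts in Lemma \ref{3.2}.
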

\begin{proof}
The result follows by considering the identity 
$$\rVert\Delta_1^{\alpha_1}\Delta_2^{\alpha_2}f_1(t_1\lambda_{\xi_1})f_2(t_2\lambda_{\xi_2})\rVert_{\mathcal{L}(\mathcal{H}_{\xi})}
=\lVert\Delta_1^{\alpha_1}f_1(t_1\lambda_{\xi_1})\rVert_{\mathcal{L}(\mathcal{H}_{\xi_1})}\lVert\Delta_2^{\alpha_2}f_2(t_2\lambda_{\xi_2})\rVert_{\mathcal{L}(\mathcal{H}_{\xi_2})},$$
and applying Proposition 6.1 in \cite{F} separately to each term of the right-hand side (see Proposition 4.1 in \cite{FP}).
\end{proof}

\begin{lemma}\label{3.2}
Let $k\in\mathcal{D}'(G)$ be a distribution on $G=G_1\times G_2$ and $n_i=\mathrm{dim}(G_i)$, for $i=1,2$. If $s_i>\frac{n_i}{2}$ for $i=1,2$, then 
$$\lVert k\rVert_{L^2(G)}\lesssim\sup_{\xi=\xi_1\otimes\xi_2\in\widehat{G}}\langle\xi_1\rangle^{s_1}\langle\xi_2\rangle^{s_2}\lVert \hat{k}(\xi)\rVert_{\mathcal{L}(\mathcal{H}_{\xi})}.$$ 
In particular, if there exist $s_1>\frac{n_1}{2}$ and $s_2>\frac{n_2}{2}$ such that the right-hand side is finite, then $k\in L^2(G)$.
\end{lemma}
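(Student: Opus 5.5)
Lemma \ref{3.2} follows from Plancherel on $G$ combined with a Weyl-law summability estimate on each factor. Put
$$M:=\sup_{\xi=\xi_1\otimes\xi_2\in\widehat G}\langle\xi_1\rangle^{s_1}\langle\xi_2\rangle^{s_2}\lVert\hat k(\xi)\rVert_{\mathcal L(\mathcal H_\xi)},$$
and assume $M<\infty$ (otherwise there is nothing to prove).

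First, we use that every irreducible unitary representation of $G=G_1\times G_2$ is of the form $\xi_1\otimes\xi_2$ with $\xi_i\in\widehat G_i$. Hence $\widehat G\cong\widehat G_1\times\widehat G_2$ and $d_\xi=d_{\xi_1}d_{\xi_2}$.

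Next, we pass from the Hilbert--Schmidt norm to the operator norm via $\lVert A\rVert_{HS}^2\le d_\xi\lVert A\rVert_{\mathcal L(\mathcal H_\xi)}^2$ for $A\in\mathbb C^{d_\xi\times d_\xi}$. The Parseval identity then gives
$$\lVert k\rVert_{L^2(G)}^2=\sum_{\xi\in\widehat G}d_\xi\lVert\hat k(\xi)\rVert_{HS}^2\le\sum_{\xi\in\widehat G}d_\xi^2\lVert\hat k(\xi)\rVert^2_{\mathcal L(\mathcal H_\xi)}\le M^2\Bigl(\sum_{\xi_1\in\widehat G_1}d_{\xi_1}^2\langle\xi_1\rangle^{-2s_1}\Bigr)\Bigl(\sum_{\xi_2\in\widehat G_2}d_{\xi_2}^2\langle\xi_2\rangle^{-2s_2}\Bigr).$$
For a general distribution, the Parseval identity should be read as follows: the right-hand side of Parseval being finite forces $k\in L^2(G)$, since the Fourier series with $\ell^2$ coefficients defines an $L^2$ function whose Fourier coefficients coincide with those of $k$. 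The product structure of the last bound uses that the weight factorizes as $\langle\xi_1\rangle^{s_1}\langle\xi_2\rangle^{s_2}$ and that $d_\xi^2=d_{\xi_1}^2d_{\xi_2}^2$.

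It remains to show that $\sum_{\xi_i\in\widehat G_i}d_{\xi_i}^2\langle\xi_i\rangle^{-2s_i}<\infty$ whenever $2s_i>n_i$. This is the standard fact on a compact Lie group of dimension $n$ that $\sum_{\xi}d_\xi^2\langle\xi\rangle^{-s}<\infty$ iff $s>n$. It can be proved in either of two ways:
\begin{itemize}
\item by noting that this sum is the trace of $(I+\mathcal L_{G})^{-s/2}$ counted with multiplicity $d_\xi^2$, and applying the Weyl law $\sum_{\langle\xi\rangle\le\Lambda}d_\xi^2\asymp\Lambda^{n}$;
\item by observing that it equals $\lVert(I+\mathcal L_G)^{-s/4}\delta_e\rVert_{L^2}^2$, which is finite iff $\delta_e\in H^{-s/2}(G)$, i.e.\ iff $s/2>n/2$.
\end{itemize}
This is the same ingredient used in Lemma 6.4 of \cite{F}, so we will invoke it directly for each $G_i$.

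The only point requiring some care is making sure the weight and the dimension factors split cleanly between the two factors, so that the one-group summability result can be applied separately on $G_1$ and on $G_2$. Once that is checked, the implied constant in Lemma \ref{3.2} is the square root of the product of the two finite sums, and the ``in particular'' statement follows immediately.
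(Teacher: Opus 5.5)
Your argument is correct and is essentially the paper's: the paper inserts the Bessel-potential kernel $B_{s_1}\otimes B_{s_2}$ of $(I_1+\mathcal L_{G_1})^{-s_1/2}\otimes(I_2+\mathcal L_{G_2})^{-s_2/2}$ and bounds $\lVert \widehat{B_{s_1}\otimes B_{s_2}}(\xi)\,A\rVert_{HS}\le\lVert \widehat{B_{s_1}\otimes B_{s_2}}(\xi)\rVert_{HS}\lVert A\rVert_{\mathcal L(\mathcal H_\xi)}$ with $A=\langle\xi_1\rangle^{s_1}\langle\xi_2\rangle^{s_2}\hat k(\xi)$. Since $\widehat{B_{s_i}}(\xi_i)=\langle\xi_i\rangle^{-s_i}I_{d_{\xi_i}}$, this is exactly your inequality $\lVert A\rVert_{HS}^2\le d_\xi\lVert A\rVert^2_{\mathcal L(\mathcal H_\xi)}$, and the only difference is how $\lVert B_{s_i}\rVert^2_{L^2(G_i)}=\sum_{\xi_i}d_{\xi_i}^2\langle\xi_i\rangle^{-2s_i}<\infty$ is justified: the paper cites heat-kernel bounds (Lemma A.5 of \cite{F}), while you use Weyl's law or $\delta_e\in H^{-s_i}(G_i)$, your second bullet being literally the same kernel.
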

\begin{proof} 
For $i=1,2$, we consider the kernel of the operator $(I_i+\mathcal L_{G_i})^{-\frac{s_i}{2}}$, which is given by
$$B_{s_i}=\frac{1}{\Gamma(\frac{s_i}{2})}\int_{0}^{+\infty}t_i^{\frac{s_i}{2}-1}e^{-t_i}p_{t_i}^{(i)}dt_i$$ 
where $p_{t_i}^{(i)}:=e^{-t_iL_{G_i}}\delta_{e_{i}},$ for $t_i>0$, is the heat kernel, which is a smooth function on $G_i$ that satisfies 
$$\forall s_i,t_i>0 \ \ \ \int_{G_i}p^{(i)}_{t_i}(x_i)dx_i=1,\ \ \ p_{t_i}^{(i)}(x_i^{-1})=p^{(i)}_{t_i}(x_i), \ \ \text{ and }\ \ p^{(i)}_{t_i}\ast p_{s_i}^{(i)}=p^{(i)}_{t_i+s_i}.$$ 
Moreover, $p_{t_i}^{(i)}$ verifies the following estimates $$|p_{t_i}^{(i)}(x_i)|\leq C_iV_i(\sqrt{t_i})^{-1}e^{-\frac{|x_i|_i^2}{C_it_i}},\ \ x_i\in G_i, t_i>0,$$
$$|X_i^{\alpha_i}p^{(i)}_{t_i(x_i)}|\leq C_i\sqrt{t_i}^{-n_i-|\alpha_i|}e^{-\frac{|x_i|^2_i}{C_it_i}},\ \ \ x_i\in G_i,\ \ 0<t_i\leq 1,$$
where $V_i(r)$ denotes the volume of the ball centered at the neutral element $e_{i}$ and of radius $r$ and $C_i$ is a positive constant. By Lemma A.5 in \cite{F}, if $s_i>\frac{n_i}{2}$, then the kernel 
$B_{s_i}$ is square integrable and the continuous inclusion $H^{s_i}(G_i)\subset C(G_i)$ holds for $i=1,2$.\\
The kernel of the operator $(I_1+\mathcal L_{G_1})^{-\frac{s_1}{2}}\otimes(I_2+\mathcal L_{G_2})^{-\frac{s_2}{2}}$ is 
$$B_{s_1,s_2}(y)=B_{s_1}(y_1)\otimes B_{s_2}(y_2),\ \ \ y=(y_1,y_2)\in G=G_1\times G_2,$$ 
and it is square integrable if $s_1>\frac{n_1}{2}$ and $s_2>\frac{n_2}{2}$, since 
$\lVert B_{s_1,s_2}\rVert_{L^2(G)}=\lVert B_{s_1}\rVert_{L^2(G_1)}\lVert B_{s_2}\rVert_{L^2(G_2)}$ (see Lemma A.3 in \cite{FP}). 
Moreover, we have the continuous inclusion $H^{s_1,s_2}(G)\subset C(G)$, since every $f\in H^{s_1,s_2}(G)$ can be written as the convolution 
$$f=\{((I_1+\mathcal L_{G_1})^{\frac{s_1}{2}}\otimes(I_2+\mathcal L_{G_2})^{\frac{s_2}{2}})f\}\ast B_{s_1,s_2}.$$
For $s_1,s_2>0$, we have $k(y)=((I_1+\mathcal L_{G_1})^{\frac{s_1}{2}}\otimes(1+\mathcal L_{G_2})^{\frac{s_2}{2}})(k\ast(B_{s_1}\otimes B_{s_2}))(y)$ 
which gives 
$$\hat{k}(\xi)=\langle\xi_1\rangle^{s_1}\langle\xi_2\rangle^{s_2}\widehat{B_{s_1}\otimes B_{s_2}}(\xi)\hat{k}(\xi).$$
Hence, for $s_1>\frac{n_1}{2},s_2>\frac{n_2}{2}$, we obtain 
$$\lVert k\rVert^2_{L^2(G)}=\sum_{\xi\in\widehat{G}}d_{\xi}\lVert\hat{k}(\xi)\rVert^2_{HS}\leq \sum_{\xi\in\widehat{G}}d_{\xi}\lVert\widehat{B_{s_1}\otimes B_{s_2}}(\xi)\rVert^2_{HS}
\lVert\langle\xi_1\rangle^{s_1}\langle\xi_2\rangle^{s_2}\hat{k}(\xi)\rVert^2_{\mathcal{L}(\mathcal{H}_{\xi})}$$
$$\leq\lVert B_{s_1,s_2}\rVert^2_{L^2(G)}\sup_{\xi\in\widehat{G}}\langle\xi_1\rangle^{2s_1}\langle\xi_2\rangle^{2s_2}\lVert\hat{k}(\xi)\rVert^2_{\mathcal{L}(\mathcal{H}_{\xi})}
\lesssim\sup_{\xi\in\widehat{G}}\langle\xi_1\rangle^{2s_1}\langle\xi_2\rangle^{2s_2}\lVert\hat{k}(\xi)\rVert^2_{\mathcal{L}(\mathcal{H}_{\xi})},$$ 
and this concludes the proof.
\end{proof}

\begin{corollary}\label{cor4.4}
Let $(m_1,m_2)\in\mathbb{R}^2$, $0\leq\delta_i\leq\rho_i\leq 1$ and $n_i=\mathrm{dim}(G_i)$ for $i=1,2$. 
Let $\sigma\in S^{m_1,m_2}_{\rho,\delta}(G\times\widehat{G})$ be a symbol with associated kernel $\kappa_x(y)$. 
Then, for any $\alpha=(\alpha_1,\alpha_2)\in\mathbb{N}_0^{n_P}\times\mathbb{N}_0^{n_R},$ and $\gamma=(\gamma_1,\gamma_2),\theta=(\theta_1,\theta_2)\in\mathbb{N}_0^{n_1}\times\mathbb{N}_0^{n_2}$ 
such that
$$|\theta_i|+m_i+n_i+\delta_i|\gamma_i|<\rho_i|\alpha_i|,\ \ \ i=1,2,$$ 
the function $z\mapsto\partial_x^{\gamma}\partial_z^{\theta}(q^{\alpha}(z)\kappa_x(z))$ is continuous on $G$ and bounded in the following sense 
$$\sup_{z\in G}|\partial^{\gamma}_x\partial^{\theta}_z(q^{\alpha}(z)\kappa_x(z))|\lesssim_{m_i,\Delta,\gamma_i,\theta_i}\lVert\sigma(x,\cdot)
\rVert_{S^{m_1,m_2}_{\rho,\delta},(|\alpha_1|,|\alpha_2|),(|\gamma_1|,|\gamma_2|)},\ \ \ x\in G.$$
\end{corollary}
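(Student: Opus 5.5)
The plan is to reduce the pointwise bound to an absolute summability estimate on the Fourier side. First, by the kernel proposition above, $q^{\alpha}(z)\partial_x^{\gamma}\kappa_x(z)$ is the kernel associated with the symbol $\partial_x^{\gamma}\Delta^{\alpha}\sigma$. Since $q^\alpha$ does not depend on $x$, we have $\partial_x^\gamma\partial_z^\theta(q^\alpha\kappa_x)=\partial_z^\theta(q^\alpha\partial_x^\gamma\kappa_x)$. So we are looking at
$$\partial_z^{\theta}\Big(\sum_{\xi\in\widehat G} d_\xi\mathrm{Tr}\big(\xi(z)\,\partial_x^{\gamma}\Delta^{\alpha}\sigma(x,\xi)\big)\Big).$$
Left-invariant derivatives act on $\xi(z)$ by multiplication by the symbol of the vector field:
$$\partial_z^{\theta}\xi(z)=\xi(z)\,\sigma_{\partial^{\theta}}(\xi),\qquad \|\sigma_{\partial^{\theta}}(\xi)\|_{\mathcal L(\mathcal H_\xi)}\lesssim\langle\xi_1\rangle^{|\theta_1|}\langle\xi_2\rangle^{|\theta_2|},$$
because $\partial_z^{\theta}=\partial_{z_1}^{\theta_1}\partial_{z_2}^{\theta_2}$ splits along the two factors. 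Hence each term of the differentiated series is bounded in absolute value by
$$d_\xi\cdot d_\xi\,\langle\xi_1\rangle^{|\theta_1|}\langle\xi_2\rangle^{|\theta_2|}\|\partial_x^\gamma\Delta^\alpha\sigma(x,\xi)\|_{\mathcal L(\mathcal H_\xi)},$$
using $|\mathrm{Tr}(UA)|\le d_\xi\|A\|_{\mathcal L}$ for unitary $U$. The symbol estimate bounds the last factor by
$$\|\sigma(x,\cdot)\|_{S^{m_1,m_2}_{\rho,\delta},(|\alpha_1|,|\alpha_2|),(|\gamma_1|,|\gamma_2|)}\,\langle\xi_1\rangle^{m_1-\rho_1|\alpha_1|+\delta_1|\gamma_1|}\langle\xi_2\rangle^{m_2-\rho_2|\alpha_2|+\delta_2|\gamma_2|}.$$

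It remains to show that the series converges, uniformly in $z$. Since $d_\xi=d_{\xi_1}d_{\xi_2}$ and $\widehat G=\widehat G_1\times\widehat G_2$, the resulting sum factorizes into
$$\prod_{i=1,2}\ \sum_{\xi_i\in\widehat G_i} d_{\xi_i}^2\langle\xi_i\rangle^{|\theta_i|+m_i-\rho_i|\alpha_i|+\delta_i|\gamma_i|}.$$
Each factor is finite by the standard fact on a compact Lie group that $\sum_{\xi_i}d_{\xi_i}^2\langle\xi_i\rangle^{-s}<\infty$ for $s>n_i$, which follows from Weyl's law for $\mathcal L_{G_i}$ or from the trace of $(I_i+\mathcal L_{G_i})^{-s/2}$. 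The hypothesis $|\theta_i|+m_i+n_i+\delta_i|\gamma_i|<\rho_i|\alpha_i|$ says exactly that the exponent is $<-n_i$.

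Uniform absolute convergence of the series of continuous functions $z\mapsto d_\xi\mathrm{Tr}(\xi(z)\sigma_{\partial^\theta}(\xi)\,\partial^\gamma_x\Delta^\alpha\sigma(x,\xi))$ gives continuity in $z$ and the sup bound. The same domination, applied to all $\theta'\le\theta$ (which satisfy the same inequality), justifies term-by-term differentiation in $z$. Differentiation in $x$ is handled by the symbol class itself. One point to check is that term-wise identification of the kernel of $\partial_x^\gamma\Delta^\alpha\sigma$ is legitimate a priori only distributionally. Absolute convergence then shows that the distribution is the continuous function given by the series. The constants depend only on $m_i,\gamma_i,\theta_i$ and the difference operators through the Weyl-sum constants and $\sigma_{\partial^\theta}$.

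The main obstacle is the Weyl-type summability $\sum d_{\xi_i}^2\langle\xi_i\rangle^{-s}<\infty$ for $s>n_i$. If it is preferred to stay within the tools already stated, I would instead bound in $L^2$ via Lemma \ref{3.2} applied with extra Sobolev derivatives, and then use $H^{s_1,s_2}\subset C(G)$. That route loses $n_i/2$ twice, which again matches the threshold $n_i$ in the hypothesis.
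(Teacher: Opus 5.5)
Your proof is correct, but your main argument takes a different route from the paper's; the paper's proof is exactly the fallback in your last paragraph.

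\textbf{The paper's route.} It works in $L^2$. It applies Lemma \ref{3.2}, with weights $\langle\xi_i\rangle^{s_i'}$ and $s_i'>n_i/2$, to $(I+\mathcal L_{G_1})^{s_1/2}\otimes(I+\mathcal L_{G_2})^{s_2/2}\partial_z^\theta(q^\alpha\partial_x^\gamma\kappa_x)$. It absorbs $\partial_z^\theta$ as $|\theta_i|$ extra Sobolev derivatives. It then concludes with the embedding $H^{s_1,s_2}(G)\hookrightarrow C(G)$ for $s_i>n_i/2$. The threshold $n_i$ appears as $s_i'+s_i>n_i$.

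\textbf{Your route.} You bound the Fourier inversion series directly in $\ell^1$. You use $|\mathrm{Tr}(\xi(z)B)|\le d_\xi\|B\|_{\mathcal{L}(\mathcal{H}_\xi)}$ and the factorization over $\widehat G_1\times\widehat G_2$, so the loss of $n_i$ happens in a single step.

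\textbf{What each buys.} Your version is shorter. It gives the explicit constant $\prod_i\sum_{\xi_i}d_{\xi_i}^2\langle\xi_i\rangle^{|\theta_i|+m_i-\rho_i|\alpha_i|+\delta_i|\gamma_i|}$. It also gets continuity from uniform convergence of the series, not from a separate embedding theorem. The paper's version reuses the $L^2$/Sobolev machinery of Lemma \ref{3.2}, which it needs again in Lemma \ref{lem4.6}.

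\textbf{Your ``main obstacle''.} The summability you flag is not an extra ingredient. By Plancherel,
$$\sum_{\xi_i}d_{\xi_i}^2\langle\xi_i\rangle^{-2s}=\|B_{s}\|^2_{L^2(G_i)},$$
where $B_s$ is the kernel of $(I_i+\mathcal L_{G_i})^{-s/2}$. So finiteness for $2s>n_i$ is exactly the square-integrability of $B_s$ for $s>n_i/2$. The paper already invokes this, via Lemma A.5 of \cite{F}, in the proof of Lemma \ref{3.2}. Your argument therefore stays within the paper's toolkit.
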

\begin{proof}
By Lemma \ref{3.2}, for all $s_i\in\mathbb R$ and $s_i'>\frac{n_i}{2}, i=1,2$, we have 
$$\lVert(I+\mathcal L_{G_1})_{z_1}^{\frac{s_1}{2}}\otimes(I+\mathcal L_{G_2})_{z_2}^{\frac{s_2}{2}}\partial_z^{\theta}(q^{\alpha}(z)\partial^{\gamma}_x\kappa_x(z))\rVert_{L^2(G)}\ \ \ \ \ \ \ \ \ \ \ \ \ \ \ \ \ \ \ \ \ \ \ \ \ \ \ \ \ \ \ \ \ \ \ \ \ \ \ \ \ \ \ \ \ \ \ $$ 
$$\ \ \ \ \ \ \ \ \ \ \ \ \ \ \ \ \ \ \ \ \ \  \lesssim_{s_i,\theta}\lVert(I+\mathcal L_{G_1})^{\frac{s_1+|\theta_1|}{2}}\otimes(I+\mathcal L_{G_2})^{\frac{s_2+|\theta_2|}{2}}(q^{\alpha}(z)\partial^{\gamma}_x\kappa_x(\cdot))\rVert_{L^2(G)}\ \ \ \ \ \ \ \ \ \ \ $$ 
$$\ \ \ \ \ \ \ \ \ \ \ \ \ \ \ \ \ \ \ \ \ \ \ \ \ \ \ \ \ \ \ \ \ \ \ \ \ \ \ \ \ \ \ \ \ \lesssim_{s_i'}\sup_{\xi\in\widehat{G}}\langle\xi_1\rangle^{s_1'+s_1+|\theta_1|}\langle\xi_2\rangle^{s_2'+s_2+|\theta_2|}\lVert\partial_x^{\gamma}\Delta^{\alpha}\sigma(x,\xi)\rVert_{\mathcal{L}(\mathcal{H}_{\xi})}.$$
The latter quantity can be estimated by $\lVert\sigma(x,\cdot)\rVert_{S^{m_1,m_2}_{\rho,\delta}(|\alpha_1|,|\alpha_2|),(|\gamma_1|,|\gamma_2|)}$, whenever 
$$\label{cond}s_i'+s_i+|\theta_i|\leq -m_i+\rho_i|\alpha_i|-\delta_i|\gamma_i|,\ \ \ i=1,2.$$
The condition $|\theta_i|+m_i+n_i+\delta_i|\gamma_i|<\rho_i|\alpha_i|$ yields the existence of $s_i>\frac{n_i}{2}$ such that the inequality above is verified.\\
By Sobolev embedding $H^{s_1,s_2}(G)\hookrightarrow C(G)$, the function $z\mapsto\partial_x^{\gamma}\partial^{\theta}_z(q^{\alpha}(z)\kappa_x(z))$ is continuous on $G$ and its 
$L^{\infty}$-norm is bounded, up to a constant, by $\lVert\sigma(x,\cdot)\rVert_{S^{m_1,m_2}_{\rho,\delta},(|\alpha_1|,|\alpha_2|),(|\gamma_1|,|\gamma_2|)}$. This concludes the proof. 
\end{proof}

Corollary \ref{cor4.4} immediately leads to the following proposition.

\begin{proposition}\label{prop4.5}
Let $\sigma\in S^{m_1,m_2}_{\rho,\delta}(G\times\widehat{G})$, with $0\leq\delta_i\leq\rho_i\leq 1, \rho_i\neq 0, i=1,2$. 
Then its associated kernel $(x,y)\mapsto\kappa_x(y)$ is a smooth function on $G\times(G\setminus S)$, where 
$$S:=\{(e_1,x_2)\in G_1\times G_2\}\cup\{(x_1,e_2)\in G_1\times G_2\}$$ is the set over which the admissible functions $q^{\alpha}$ vanish.
If $\sigma\in S^{-\infty,-\infty}(G\times\widehat{G})$ is smoothing, then its associated kernel $(x,y)\mapsto \kappa_x(y)$ is smooth on $G\times G$. Conversely, if the kernel $\kappa_x(y)$, associated with the symbol $\sigma\in S^{m_1,m_2}_{\rho,\delta}(G\times\widehat{G})$, is smooth on $G\times G$, then $\sigma$ belongs to $S^{-\infty,-\infty}(G\times\widehat{G})$.
\end{proposition}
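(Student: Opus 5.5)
Three things must be proved: smoothness of $(x,y)\mapsto\kappa_x(y)$ off $S$, smoothness of the kernel when $\sigma$ is smoothing, and the converse. All three reduce to Corollary \ref{cor4.4} together with the identity $\kappa_{\partial^\beta\Delta^\alpha\sigma,x}(y)=q^\alpha(y)\partial_x^\beta\kappa_{\sigma,x}(y)$ from the first proposition of this section.

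\emph{Smoothness on $G\times(G\setminus S)$.} Fix $(x^0,y^0)$ with $y^0=(y^0_1,y^0_2)$, $y^0_1\neq e_1$, $y^0_2\neq e_2$. By strong admissibility of $P$ relative to $G_1$, some $p_j$ is nonzero at $y^0$; since $p_j$ depends only on $y_1$, it stays nonzero on a neighbourhood $U_1\times G_2$ of $y^0$. Likewise some $r_k$ is nonzero on $G_1\times U_2$. Put $q=p_jr_k$, which is nonzero and smooth on $U=U_1\times U_2$. Given orders $\gamma,\theta$, choose $N$ so that, with $\alpha=(Ne_j,Ne_k)$,
$$|\theta_i|+m_i+n_i+\delta_i|\gamma_i|<\rho_i N,\qquad i=1,2.$$
This is possible because $\rho_i>0$. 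Corollary \ref{cor4.4} then shows that $\partial_x^{\gamma'}\partial_z^{\theta'}(q^N\kappa_x)$ is continuous and bounded for all $\gamma'\le\gamma$, $\theta'\le\theta$. Because the $x$-derivatives are handled with uniform bounds (the seminorms $\lVert\sigma(x,\cdot)\rVert$ are bounded in $x$), one concludes that $q^N\kappa_x(z)$ is $C^k$ jointly in $(x,z)$ for every $k$. Here I would argue via the mean value theorem in $x$ applied to the $\partial_x$-differentiated symbol, which lies in a class of the same type, so continuity in $x$ follows from the same corollary applied to one more $x$-derivative. Dividing by $q^N$, which is smooth and nonvanishing on $U$, gives $\kappa\in C^\infty(G\times U)$. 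The main technical point is exactly this passage from separate $z$-regularity with uniform bounds to joint smoothness in $(x,z)$. Differentiation in $x$ commutes with the inverse Fourier series in the distributional sense, and the resulting series converge uniformly by the Sobolev estimate, so joint continuity of all mixed derivatives follows.

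\emph{Smoothing symbols.} If $\sigma\in S^{-\infty,-\infty}$, take $\alpha=0$ in Corollary \ref{cor4.4}. The condition $|\theta_i|+m_i+n_i+\delta_i|\gamma_i|<0$ can be met for any $\gamma,\theta$ by choosing $m_i$ very negative. Hence all $\partial_x^\gamma\partial_z^\theta\kappa_x(z)$ are continuous and bounded, and by the same joint-continuity argument $\kappa\in C^\infty(G\times G)$.

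\emph{Converse.} If $\kappa\in C^\infty(G\times G)$, then $\partial_x^\beta\Delta^\alpha\sigma(x,\xi)$ is the Fourier transform in $y$ of $q^\alpha(y)\partial_x^\beta\kappa_x(y)$, a smooth function. For any $s_1,s_2\ge0$, write
$$\langle\xi_1\rangle^{s_1}\langle\xi_2\rangle^{s_2}\,\widehat{f}(\xi)=\mathcal F\big((I+\mathcal L_{G_1})^{s_1/2}\otimes(I+\mathcal L_{G_2})^{s_2/2}f\big)(\xi),$$
taking $s_i$ even integers so that these are differential operators. Use the bound $\lVert\hat g(\xi)\rVert_{\mathcal L(\mathcal H_\xi)}\le\lVert g\rVert_{L^1(G)}$ with
$$g=(I+\mathcal L_{G_1})^{s_1/2}\otimes(I+\mathcal L_{G_2})^{s_2/2}\bigl(q^\alpha\,\partial_x^\beta\kappa_x\bigr).$$
This gives
$$\lVert\partial_x^\beta\Delta^\alpha\sigma(x,\xi)\rVert\lesssim\langle\xi_1\rangle^{-s_1}\langle\xi_2\rangle^{-s_2},$$
uniformly in $x$ by compactness. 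Since $s_1,s_2$ are arbitrary, $\sigma\in S^{-\infty,-\infty}$.
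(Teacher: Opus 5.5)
Your proposal is correct and follows the route the paper intends. The paper gives no argument beyond the remark that the proposition follows immediately from Corollary \ref{cor4.4}, and the pieces you add are exactly what that remark leaves implicit: the localization off $S$ with the nonvanishing weight $p_j^N r_k^N$, the choice $\alpha=0$ with very negative $m_i$ for smoothing symbols, the one extra $x$-derivative to get joint regularity, and the bound $\lVert\hat g(\xi)\rVert_{\mathcal L(\mathcal H_\xi)}\le\lVert g\rVert_{L^1(G)}$ after applying powers of the biLaplacian for the converse.
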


In order to show some estimates for the kernels, we need to work inside dyadic pieces where the frequencies are localized. In that direction, we have the following lemma.
\begin{lemma}\label{lem4.6}
Let $\chi\in C^{\infty}_0(\mathbb{R})$ be a given function with values in $[0,1]$ and $\chi\equiv 1$ in a neighborhood of 0. Let 
$\sigma\in S^{m_1,m_2}_{\rho,\delta}(G\times\widehat{G})$ and let $\kappa_x$ be its associated kernel. For each $l_1,l_2\in\mathbb{N}$ we define
$$\sigma_{l_1,l_2}(x,\xi):=\sigma(x,\xi)\chi(l_1^{-1}\lambda_{\xi_1})\chi(l_2^{-1}\lambda_{\xi_2}).$$ 
Then $\sigma_{l_1,l_2}\in S^{-\infty,-\infty}(G\times\widehat{G})$ is smoothing and for any $\gamma=(\gamma_1,\gamma_2)\in\mathbb{N}_0^{n_1}\times\mathbb{N}_0^{n_2}$, there exists a constant $C>0$ such that 
$$\lVert\sigma_{l_1,l_2}\rVert_{S^{m_1,m_2}_{\rho,\delta},0,(|\gamma_1|,|\gamma_2|)}\leq C_{G,m_1,m_2,\gamma,\rho,\delta}\lVert\sigma\rVert_{S^{m_1,m_2}_{\rho,\delta},0,(|\gamma_1|,|\gamma_2|)}.$$ 
In addition, the kernel $(x,y)\mapsto \kappa_{l_1,l_2,x}(y)$ associated with $\sigma_{l_1,l_2}$ is smooth on $G\times G$, and for $\beta=(\beta_1,\beta_2)\in\mathbb{N}_0^{n_1}\times\mathbb{N}_0^{n_2},$ the convergence $ \partial^{\beta}\kappa_{l_1,l_2,x}\rightarrow \partial^{\beta}\kappa_x$ in $\mathcal{D}'(G)$ holds uniformly in $x\in G$, as $l_1,l_2\rightarrow+\infty$.
\end{lemma}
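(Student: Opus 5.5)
The plan is to follow the single-group argument (Lemma 6.4/Lemma 4.3 type statements in \cite{F,FP}), treating the two factors separately through the product structure of the cutoff. There are three claims: $\sigma_{l_1,l_2}$ is smoothing, the uniform seminorm bound with no difference operators, and distributional convergence of the kernels.

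\emph{Smoothing.} For fixed $l_1,l_2$ the function $\chi(l_1^{-1}\lambda_{\xi_1})\chi(l_2^{-1}\lambda_{\xi_2})$ is supported on finitely many $\xi=\xi_1\otimes\xi_2$, since the eigenvalues $\lambda_{\xi_i}$ accumulate only at infinity. So $\sigma_{l_1,l_2}(x,\cdot)$ vanishes outside a finite set $F\subset\widehat G$. Its kernel
\[
\kappa_{l_1,l_2,x}(y)=\sum_{\xi\in F} d_\xi\,\mathrm{Tr}\bigl(\xi(y)\sigma_{l_1,l_2}(x,\xi)\bigr)
\]
is then a finite sum of functions smooth in $(x,y)$, because $\sigma$ is smooth in $x$ and the matrix coefficients are smooth in $y$. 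The converse part of Proposition \ref{prop4.5} then gives $\sigma_{l_1,l_2}\in S^{-\infty,-\infty}(G\times\widehat G)$. One can also see this directly: differences of a finitely supported matrix symbol produce kernels $q^\alpha\kappa$ with $\kappa$ smooth, hence symbols that decay rapidly.

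\emph{Seminorm bound.} With $\alpha=0$ only $x$-derivatives appear, and
\[
\partial_x^\gamma\sigma_{l_1,l_2}=\chi(l_1^{-1}\lambda_{\xi_1})\chi(l_2^{-1}\lambda_{\xi_2})\,\partial_x^\gamma\sigma .
\]
The cutoff factor is a scalar multiple of $I_{d_\xi}$ with absolute value at most $1$, so
\[
\|\partial_x^\gamma\sigma_{l_1,l_2}(x,\xi)\|_{\mathcal L(\mathcal H_\xi)}\le\|\partial_x^\gamma\sigma(x,\xi)\|_{\mathcal L(\mathcal H_\xi)} .
\]
This gives the bound with $C=1$. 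If one wanted differences as well, Proposition \ref{Leibniz} combined with Proposition \ref{prop4.2} (taking $m_i=0$, $t_i=l_i^{-1}\le 1$) would give bounds on $\Delta^{\alpha}$ of the cutoff that are uniform in $l_1,l_2$. That is not needed here.

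\emph{Convergence.} Test against $\phi\in C^\infty(G)$. By the Fourier pairing,
\[
\langle\partial_x^\beta\kappa_{l_1,l_2,x},\phi\rangle=\sum_{\xi}d_\xi\,\mathrm{Tr}\Bigl(\partial_x^\beta\sigma(x,\xi)\,\chi(l_1^{-1}\lambda_{\xi_1})\chi(l_2^{-1}\lambda_{\xi_2})\,\widehat{\phi}(\xi^{*})\text{-type term}\Bigr),
\]
and the difference from the corresponding quantity for $\partial_x^\beta\kappa_x$ is the same sum with the factor $1-\chi(l_1^{-1}\lambda_{\xi_1})\chi(l_2^{-1}\lambda_{\xi_2})$. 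Three facts control this sum:
\begin{itemize}
\item[(i)] $\|\partial_x^\beta\sigma(x,\xi)\|\lesssim\langle\xi_1\rangle^{m_1+\delta_1|\beta_1|}\langle\xi_2\rangle^{m_2+\delta_2|\beta_2|}$, uniformly in $x$;
\item[(ii)] $\widehat\phi$ decays faster than any power of $\langle\xi_1\rangle\langle\xi_2\rangle$;
\item[(iii)] $\sum_\xi d_\xi^2\langle\xi_1\rangle^{-s_1}\langle\xi_2\rangle^{-s_2}<\infty$ for $s_i$ large.
\end{itemize}
The factor $1-\chi\chi$ vanishes unless $\lambda_{\xi_1}\ge c\,l_1$ or $\lambda_{\xi_2}\ge c\,l_2$. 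On that region one of the weights $\langle\xi_i\rangle$ is at least $\sim l_i^{1/2}$. Spending one extra power of decay of $\widehat\phi$ in that variable bounds the difference by $C_\phi\bigl(l_1^{-1/2}+l_2^{-1/2}\bigr)$, uniformly in $x$.

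The main care needed is exactly this last tail estimate. One must make sure the region where $1-\chi\chi\ne0$ forces largeness in at least one variable, while the other variable is controlled only by the decay of $\widehat\phi$. This works because $\widehat\phi$ decays jointly in both $\langle\xi_1\rangle$ and $\langle\xi_2\rangle$.
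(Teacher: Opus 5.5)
Your proposal is correct and follows essentially the paper's argument. Smoothness comes from the finite support of the cutoff. The $\alpha=0$ seminorm bound follows from $0\le\chi(l_1^{-1}\lambda_{\xi_1})\chi(l_2^{-1}\lambda_{\xi_2})\le 1$, which the paper leaves implicit. Convergence uses that $1-\chi(l_1^{-1}\lambda_{\xi_1})\chi(l_2^{-1}\lambda_{\xi_2})\neq 0$ forces $\lambda_{\xi_i}>\epsilon l_i$ for some $i$, and then spends one power of $\langle\xi_i\rangle$. The one difference is cosmetic: you pair with a test function and use the rapid decay of its Fourier coefficients, whereas the paper bounds the same quantity in the fixed negative Sobolev norm $H^{-s_1-m_1-\delta_1|\beta_1|-1,-s_2-m_2-\delta_2|\beta_2|-1}(G)$ via $\ell^\infty\hookrightarrow h^{-s_1,-s_2}$; these are dual forms of the same estimate.
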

\begin{proof}
For every $l_1,l_2\in\mathbb{N}$, the invariant symbol $\chi(l_1^{-1}\lambda_{\xi_1})\chi(l_2^{-1}\lambda_{\xi_2})$ is smoothing because it has compact support in $\xi$. 
The properties of the symbol classes imply that the symbol $\sigma_{l_1,l_2}$ is also smoothing and, from Proposition \ref{prop4.5}, its kernel $(x,y)\mapsto\kappa_{l_1,l_2,x}(y)$ is a smooth function 
on $G\times G$. We shall give the proof of the convergence of the kernels. Setting $s_i=\lceil\frac{n_i}{2}\rceil+1$, for $i=1,2$, we have 
\begin{align*} 
\lVert\partial^{\beta}(\kappa_{l_1,l_2,x}&-\kappa_x)\rVert_{H^{-s_1-m_1-\delta_1|\beta_1|-1,-s_2-m_2-\delta_2|\beta_2|-1}(G)}\\
&=\lVert\partial^{\beta}(\sigma_{l_1,l_2}-\sigma)(x,\cdot)\rVert_{h^{-s_1-m_1-\delta_1|\beta_1|-1,-s_2-m_2-\delta_2|\beta_2|-1}(\widehat{G})}\\
&=\lVert(1-\chi(l_1^{-1}\lambda_{\xi_1})\chi(l_2^{-1}\lambda_{\xi_2}))\partial^{\beta}\sigma(x,\cdot)\rVert_{h^{-s_1-m_1-\delta_1|\beta_1|-1,-s_2-m_2-\delta_2|\beta_2|-1}(\widehat{G})}\\ 
&\lesssim\lVert\langle\xi_1\rangle^{-m_1-\delta_1|\beta_1|-1}\langle\xi_2\rangle^{-m_2-\delta_2|\beta_2|-1}(1-\chi(l_1^{-1}\lambda_{\xi_1})\chi(l_2^{-1}\lambda_{\xi_2}))
\partial^{\beta}\sigma(x,\cdot)\rVert_{h^{-s_1,-s_2}(\widehat{G})}\\
&\lesssim\lVert\langle\xi_1\rangle^{-m_1-\delta_1|\beta_1|-1}\langle\xi_2\rangle^{-m_2-\delta_2|\beta_2|-1}(1-\chi(l_1^{-1}\lambda_{\xi_1})\chi(l_2^{-1}\lambda_{\xi_2}))
\partial^{\beta}\sigma(x,\cdot)\rVert_{L^{\infty}(\widehat{G})}.
\end{align*}
The last inequality above follows from the fact that 
$$\lVert\sigma\rVert_{h^{-s_1,-s_2}(\widehat{G})}\leq C_{s_1,s_2}\lVert\sigma\rVert_{L^{\infty}(\widehat{G})}$$ 
where $C_{s_1,s_2}:= \lVert(1+\lambda_{\xi_1})^{-\frac{s_1}{2}}(1+\lambda_{\xi_2})^{-\frac{s_2}{2}}\rVert_{h^0(\widehat{G})}$ is finite, since $s_i>\frac{n_i}{2}$, $i=1,2$.\\
By hypothesis, there are $\epsilon$ and $c$ such that $0<\epsilon<c$ and the function $\chi$ is identically equal to 1 on $[0,\epsilon]$ and equal to 0 on 
$[c,+\infty[$. Therefore, $(1-\chi(l_1^{-1}\lambda_{\xi_1})\chi(l_2^{-1}\lambda_{\xi_2}))\neq 0$ in the following three cases 
 
\begin{enumerate}
\item $\lambda_{\xi_1}>\epsilon l_1,\lambda_{\xi_2}>\epsilon l_2$,
\item $\lambda_{\xi_1}>\epsilon l_1,\lambda_{\xi_2}\leq \epsilon l_2$,
\item $\lambda_{\xi_1}\leq \epsilon l_1,\lambda_{\xi_2}>\epsilon l_2$.
\end{enumerate} In the first case, we have
\begin{align*}
\lVert\partial^{\beta}&(\kappa_{l_1,l_2,x}-\kappa_x)\rVert_{H^{-s_1-m_1-\delta_1|\beta_1|-1,-s_2-m_2-\delta_2|\beta_2|-1}(G)}\\ &
\leq\max_{\xi\in\widehat{G},\lambda_{\xi_1}>\epsilon l_1,\lambda_{\xi_2}>\epsilon l_2}\lVert(1-\chi(l_1^{-1}\lambda_{\xi_1})\chi(l_2^{-1}\lambda_{\xi_2}))\partial^{\beta}\sigma(x,\xi)
\rVert_{h^{-s_1-m_1-\delta_1|\beta_1|-1,-s_2-m_2-\delta_2|\beta_2|-1}(\widehat{G})}\\
&\leq (1+\epsilon l_1)^{-\frac{1}{2}}(1+\epsilon l_2)^{-\frac{1}{2}}\langle\xi_1\rangle^{-m_1-\delta_1|\beta_1|}\langle\xi_2\rangle^{-m_2-\delta_2|\beta_2|}\lVert\partial^{\beta}\sigma
\rVert_{L^{\infty}(\widehat{G})}\\
&\lesssim (1+\epsilon l_1)^{-\frac{1}{2}}(1+\epsilon l_2)^{-\frac{1}{2}}\lVert\sigma\rVert_{S^{m_1,m_2}_{\rho,\delta},0,(|\beta_1|,|\beta_2|)}.
\end{align*}
This implies uniform convergence $\partial^{\beta}\kappa_{l_1,l_2,x}\xrightarrow{\mathcal{D}'(G)}\partial^{\beta}\kappa_x$, as $l_1,l_2\rightarrow+\infty$. In the second case (and in the third, by reversing the role of $l_1$ and $l_2$), we have 
\begin{align*}&
\lVert\partial^{\beta}(\kappa_{l_1,l_2,x}-\kappa_x)\rVert_{H^{-s_1-m_1-\delta_1|\beta_1|-1,-s_2-m_2-\delta_2|\beta_2|-1}(G)}\\ &
\leq\max_{\xi\in\widehat{G},\lambda_{\xi_1}>\epsilon l_1,\lambda_{\xi_2}\leq\epsilon l_2}\lVert(1-\chi(l_1^{-1}\lambda_{\xi_1})\chi(l_2^{-1}\lambda_{\xi_2}))\partial^{\beta}\sigma(x,\xi)
\rVert_{h^{-s_1-m_1-\delta_1|\beta_1|-1,-s_2-m_2-\delta_2|\beta_2|-1}(\widehat{G})}\\&\leq\max_{\lambda_{\xi_1}>\epsilon l_1,\lambda_{\xi_2}\leq\epsilon l_2}\langle\xi_1\rangle^{-m_1-\delta_1|\beta_1|-1}
\langle\xi_2\rangle^{-m_2-\delta_2|\beta_2|-1}\lVert(1-\chi(l_1^{-1}\lambda_{\xi_1})\chi(l_2^{-1}\lambda_{\xi_2}))\partial^{\beta}\sigma(x,\xi)\rVert_{h^{-s_1,-s_2}(\widehat{G})}\\
&\leq (1+\epsilon l_1)^{-\frac{1}{2}}\langle\xi_1\rangle^{-m_1-\delta_1|\beta_1|}\langle\xi_2\rangle^{-m_2-\delta_2|\beta_2|}\lVert\partial^{\beta}\sigma(x,\xi)\rVert_{L^{\infty}(\widehat{G})}\\
&\lesssim (1+\epsilon l_1)^{-\frac{1}{2}}\lVert\sigma\rVert_{S^{m_1,m_2}_{\rho,\delta},0,(|\beta_1|,|\beta_2|)},
 \end{align*} 
yielding the uniform convergence in $\mathcal{D}'(G)$ and ending the proof.
\end{proof}

\begin{lemma}\label{lem4.7}
Let $\eta\in C^{\infty}_0(G)$ and $\sigma\in S^{m_1,m_2}_{\rho,\delta}(G\times\widehat{G})$, with $0\leq \delta_i\leq\rho_i\leq 1$ for $i=1,2$. For any $t_1,t_2\in]0,1[$,
the symbol $\sigma_{t_1,t_2}$ is defined by $\sigma_{t_1,t_2}(x,\xi):=\sigma(x,\xi)\eta(t_1\lambda_{\xi_1})\eta(t_2\lambda_{\xi_2})$. 
Then, for every $m_1',m_2'\in\mathbb{R}$ and $a,b\in\mathbb{N}_0$, there exists $C=C_{m_1,m_2,m_1',m_2',\gamma,\eta}>0$ such that 
$$\lVert\sigma_{t_1,t_2}\rVert_{S^{m_1',m_2'}_{\rho,\delta}0,(|\gamma_1|,|\gamma_2|)}\leq Ct_1^{\frac{m'_1-m_1}{2}}t_2^{\frac{m_2'-m_2}{2}}\lVert\sigma\rVert_{S^{m_1,m_2}_{\rho,\delta},0,(|\gamma_1|,|\gamma_2|)}.$$
\end{lemma}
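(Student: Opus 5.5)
The seminorm in the statement involves only $x$-derivatives and no difference operators ($a=0$). The plan is therefore to reduce everything to a pointwise estimate of a scalar multiplier. Since $\eta(t_1\lambda_{\xi_1})\eta(t_2\lambda_{\xi_2})$ is a scalar, independent of $x$, we have for every $\beta=(\beta_1,\beta_2)$
$$\partial_x^{\beta}\sigma_{t_1,t_2}(x,\xi)=\eta(t_1\lambda_{\xi_1})\eta(t_2\lambda_{\xi_2})\,\partial_x^{\beta}\sigma(x,\xi).$$
Writing $\langle\xi_i\rangle^{-m_i'-\delta_i|\beta_i|}=\langle\xi_i\rangle^{m_i-m_i'}\langle\xi_i\rangle^{-m_i-\delta_i|\beta_i|}$, we get for each $|\beta_i|\le|\gamma_i|$
$$\langle\xi_1\rangle^{-m_1'-\delta_1|\beta_1|}\langle\xi_2\rangle^{-m_2'-\delta_2|\beta_2|}\lVert\partial_x^{\beta}\sigma_{t_1,t_2}(x,\xi)\rVert_{\mathcal L(\mathcal H_\xi)}\le \prod_{i=1,2}\bigl(\langle\xi_i\rangle^{m_i-m_i'}|\eta(t_i\lambda_{\xi_i})|\bigr)\,\lVert\sigma\rVert_{S^{m_1,m_2}_{\rho,\delta},0,(|\gamma_1|,|\gamma_2|)}.$$
Taking the supremum over $(x,\xi)$ and the maximum over $\beta$ reduces the lemma to the one-variable bound
$$\sup_{\lambda\ge0}(1+\lambda)^{\frac{m_i-m_i'}{2}}|\eta(t_i\lambda)|\le C\,t_i^{\frac{m_i'-m_i}{2}},\qquad t_i\in]0,1[,$$
for $i=1,2$ separately. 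The product structure does the rest.

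For this scalar bound, set $s=(m_i-m_i')/2$ and split into two cases.

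\emph{Case $s\ge0$.} Let $\operatorname{supp}\eta\subset[-c,c]$. On the support we have $\lambda\le c/t_i$. Since $t_i<1$, this gives $1+\lambda\le(1+c)/t_i$, so
$$(1+\lambda)^s|\eta(t_i\lambda)|\le \lVert\eta\rVert_\infty(1+c)^s t_i^{-s}.$$

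\emph{Case $s<0$.} Here we do not need the support at all: $(1+\lambda)^s\le1\le t_i^{-s}$, because $-s>0$ and $t_i<1$. Hence the bound holds with constant $\lVert\eta\rVert_\infty$.

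In both cases $t_i^{-s}=t_i^{(m_i'-m_i)/2}$, and the constant depends only on $\eta$, $m_i$ and $m_i'$. (This is also the $\alpha=0$, $d=0$ case of Proposition \ref{prop4.2} applied to $f_i=\eta$ with order $m_i-m_i'$, which we could cite instead.)

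There is no real obstacle. The only point needing care is the case distinction on the sign of $m_i-m_i'$: when it is nonnegative the compact support of $\eta$ is essential, and when it is negative the restriction $t_i<1$ is what makes the bound work. The rest is bookkeeping of the weights $\delta_i|\beta_i|$, which cancel because $\eta$ is independent of $x$.
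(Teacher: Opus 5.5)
Your reduction is correct. Since $\eta(t_1\lambda_{\xi_1})\eta(t_2\lambda_{\xi_2})$ is a scalar independent of $x$, the displayed seminorm (which contains no difference operators) is bounded by $\lVert\sigma\rVert_{S^{m_1,m_2}_{\rho,\delta},0,(|\gamma_1|,|\gamma_2|)}$ times $\prod_i\sup_{\lambda\geq0}(1+\lambda)^{s_i}|\eta(t_i\lambda)|$, with $s_i=\frac{m_i-m_i'}{2}$. Your case $s_i\geq0$ is also fine.

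The case $s_i<0$ (i.e. $m_i'>m_i$) is wrong. There $-s_i>0$ and $t_i<1$ give $t_i^{-s_i}=t_i^{(m_i'-m_i)/2}<1$. So ``$1\leq t_i^{-s_i}$'' fails, and the bound you need actually tends to $0$ as $t_i\to0$. Under your reading of the hypothesis ($\eta$ merely compactly supported) the inequality is in fact false. Take any $\eta$ with $\eta(0)\neq0$, $\sigma=I_{d_\xi}\in S^{0,0}_{\rho,\delta}(G\times\widehat G)$, $\gamma=0$ and $m_1'=m_2'=2$. At the trivial representation the left-hand side is at least $|\eta(0)|^2$, while the right-hand side is $Ct_1t_2\to0$. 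So no argument can close this case as you set it up. In particular, citing Proposition \ref{prop4.2} does not help (and the order there should be $m_i'-m_i$, not $m_i-m_i'$).

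The missing ingredient is that $\eta$ has to vanish near the origin. The hypothesis ``$\eta\in C^\infty_0(G)$'' is a misprint. What is meant is $\eta\in C^\infty_0(]0,+\infty[)$, and this is what the proof of Theorem \ref{3.8} uses: there $\eta=\eta_1$ is supported in $[\frac12,2]$, and the choice $m_i''>m_i$ for $l_i>l_{0_i}$ is exactly your failing case. If $\mathrm{supp}\,\eta\subset[c_0,c]$ with $c_0>0$, then $1+\lambda\geq c_0/t_i$ wherever $\eta(t_i\lambda)\neq0$. Hence $(1+\lambda)^{s_i}|\eta(t_i\lambda)|\leq c_0^{s_i}\lVert\eta\rVert_\infty\, t_i^{-s_i}$ for $s_i<0$.

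So the roles are the reverse of what you say at the end. The upper end of the support handles $m_i'\leq m_i$, the distance of the support from $0$ handles $m_i'>m_i$, and $t_i<1$ plays no role in the latter.

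With this correction your argument proves the lemma as displayed, more directly than the paper's appeal to Proposition \ref{prop4.2} and the Leibniz formula (Proposition \ref{Leibniz}). Note, however, that the lemma is applied in \eqref{eq4.1} with the seminorm indexed by $(|\alpha_1|,|\alpha_2|),0$, i.e. with difference operators falling on $\eta(t_i\lambda_{\xi_i})$. That is why the paper needs those two tools, and your scalar argument does not cover that version.
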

\begin{proof}
This follows from Proposition \ref{prop4.2}, together with the Leibniz property for the difference operators (see Lemma 6.8 in \cite{F} and Lemma 4.7 in \cite{FP}).
\end{proof}
    
Recall that if $e_1\in G_1, e_2\in G_2$ are the neutral elements of $G_1$ and $G_2$, respectively, then $S$ denotes the following subset of $G=G_1\times G_2$ 
$$S:=\{(x_1,e_2)\in G; x_1\in G_1\}\cup\{(e_1,x_2)\in G; x_2\in G_2\}\subset G=G_1\times G_2.$$

We are finally ready to state and prove the main theorem of this section about kernel estimates. 
\begin{theorem}\label{3.8}
Let $\sigma\in S^{m_1,m_2}_{\rho,\delta}(G\times\widehat{G})$, with $0\leq\delta_i\leq\rho_i\leq 1, \rho_i\neq 0$ for $i=1,2$ (where $\rho=(\rho_1,\rho_2)$ and $\delta=(\delta_1,\delta_2)$) and let $\kappa_x$ its associated kernel.  
Then, for $n_i=\mathrm{dim}(G_i),i=1,2$, the following estimates hold:
\begin{enumerate}
\item If $n_i+m_i>0$ for $i=1,2$, then there exist $C>0$ and $ a,b\in \mathbb{N}^2_0$ (independent of $\sigma$) such that for every $y\notin S$
$$|\kappa_x(y)|\leq C\lVert\sigma\rVert_{S^{m_1,m_2}_{\rho,\delta},a,b}|y_1|^{-2\frac{n_1+m_1}{\rho_1}}|y_2|^{-2\frac{n_2+m_2}{\rho_2}},\ \ \ \ x\in G.$$
\item If $n_i+m_i=0$ for $i=1,2$, then there exist $C>0$ and $a,b\in\mathbb{N}^2_0$ (independent of $\sigma$) such that for all $y\notin S$ 
$$|\kappa_x(y)|\leq C\lVert\sigma\rVert_{S^{m_1,m_2}_{\rho,\delta},a,b}|\ln|y_1|||\ln|y_2||,\ \ \ x\in G.$$
\item If $n_i+m_i<0$ for $i=1,2$, then $\kappa_x$ is continuous on $G$ and for all $y\notin S$ $$|\kappa_x(y)|\leq C\lVert\sigma\rVert_{S^{m_1,m_2}_{\rho,\delta},0,0},\ \ \ x\in G.$$
\item If $n_i+m_i>0$ and $n_j+m_j=0$ for $i,j\in\{1,2\}, i\neq j$, then there exist $C>0$ and $a,b\in\mathbb{N}^2_0$ (independent of $\sigma$) such that for all $y\notin S$ 
$$|\kappa_x(y)|\leq C\lVert\sigma\rVert_{S^{m_1,m_2}_{\rho,\delta},a,b}|y_i|^{-2\frac{n_i+m_i}{\rho_i}}|\ln|y_j||,\ \ \ x\in G.$$
\item If $n_i+m_i<0$ and $n_j+m_j=0$ for $i,j\in\{1,2\},i\neq j$, then there exist $C>0$ and $\gamma_k\in\mathbb{N}_0^2$ (independent of $\sigma$) being of the form $\gamma_k=(a_k,0)$ 
or $\gamma_k=(0,a_k)$, such that for all $y\notin S$ 
$$|\kappa_x(y)|\leq C\lVert\sigma\rVert_{S^{m_1,m_2}_{\rho,\delta},\gamma_k,0}|\ln|y_j||,\ \ \ x\in G.$$
\item If $n_i+m_i>0$ and $n_j+m_j<0$ for $i,j\in\{1,2\},i\neq j$, then there exist $C>0$ and $\gamma_k\in\mathbb{N}_0^2$ (independent of $\sigma$, of the same form as above) such that for all 
$y\notin S$ 
$$|\kappa_x(y)|\leq C\lVert\sigma\rVert_{S^{m_1,m_2}_{\rho,\delta},\gamma_k,0}|y_i|^{-2\frac{n_i+m_i}{\rho_i}},\ \ \ x\in G.$$
\end{enumerate}
\end{theorem}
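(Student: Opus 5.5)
We plan to adapt the dyadic/heat-kernel argument of Fischer (Theorem 6.? in \cite{F}) and of \cite{FP}, performed separately in each factor. Fix $\eta\in C^\infty_0(\mathbb R)$ with $\chi$-type properties and a dyadic partition of unity $1=\eta_0(\lambda)+\sum_{j\ge1}\eta(2^{-j}\lambda)$ on $[0,\infty[$, and decompose
$$\sigma(x,\xi)=\sum_{j_1,j_2\ge0}\sigma_{j_1,j_2}(x,\xi),\qquad \sigma_{j_1,j_2}(x,\xi):=\sigma(x,\xi)\eta_{j_1}(\lambda_{\xi_1})\eta_{j_2}(\lambda_{\xi_2}),$$
with $t_i=2^{-j_i}$. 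The kernel is $\kappa_x=\sum\kappa_{j_1,j_2,x}$, the series converging in $\mathcal D'(G)$ uniformly in $x$ by Lemma \ref{lem4.6}. Each $\sigma_{j_1,j_2}$ is smoothing, so $\kappa_{j_1,j_2,x}$ is smooth and we may estimate it pointwise, then sum.

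For each piece we will use two bounds in each variable, combined as products thanks to the tensor structure. The first is the \emph{trivial bound}: by Lemma \ref{lem4.7} with $m_i'=-n_i-\epsilon$ and Corollary \ref{cor4.4} (with $\alpha=0$), or more directly by summing $d_\xi\,\mathrm{Tr}$ over the support $\lambda_{\xi_i}\sim 2^{j_i}$ (Weyl law: $\sum_{\lambda_{\xi_i}\le\Lambda}d_{\xi_i}^2\lesssim\Lambda^{n_i/2}$), this gives $|\kappa_{j_1,j_2,x}(y)|\lesssim 2^{j_1(n_1+m_1)/2}2^{j_2(n_2+m_2)/2}\lVert\sigma\rVert$. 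The second is the \emph{decay bound}: multiplying by $|y_i|^{2N_i}$, which is controlled by a sum of $|q^{\alpha}(y)|$ with $|\alpha_i|=N_i$ involving only $p$'s (resp. $r$'s) via strong admissibility relative to $G_i$, converts into difference operators $\Delta^{\alpha}$. Lemma \ref{lem4.7}, Proposition \ref{prop4.2} and the Leibniz rule (Proposition \ref{Leibniz}) give $\Delta^\alpha\sigma_{j_1,j_2}$ of size $2^{j_i(m_i-\rho_i N_i)/2}$ in the $i$-th factor on the dyadic support, so by the same trace summation
$$|y_1|^{2N_1}|y_2|^{2N_2}|\kappa_{j_1,j_2,x}(y)|\lesssim \prod_i 2^{j_i(n_i+m_i-\rho_iN_i)/2}\,\lVert\sigma\rVert_{S^{m_1,m_2}_{\rho,\delta},(N_1,N_2),0}.$$
Mixed choices, with trivial in one factor and decay in the other, are likewise available. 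This is where $\rho_i\ne0$ enters.

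Summation is then done factor by factor: $\sum_{j_1,j_2}\min(\cdot)\le\prod_{i}\sum_{j_i}\min\bigl(2^{j_i(n_i+m_i)/2},\,|y_i|^{-2N_i}2^{j_i(n_i+m_i-\rho_iN_i)/2}\bigr)$, where the min of products is bounded by the product of mins because each of the four combinations is allowed. In factor $i$, with $N_i>(n_i+m_i)/\rho_i$, split at $2^{j_i}\approx|y_i|^{-4/\rho_i}$. If $n_i+m_i>0$, both geometric sums give $|y_i|^{-2(n_i+m_i)/\rho_i}$. If $n_i+m_i=0$, the low part has $\approx\ln|y_i|^{-1}$ terms of size $O(1)$ and the tail is $O(1)$, giving $|\ln|y_i||$ (near $|y_i|\sim1$ we use $1+|\ln|y_i||$, bounded since $G_i$ is compact). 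If $n_i+m_i<0$, the trivial sum already converges without any $\Delta$, so that factor only needs $\alpha_i=0$; this explains the seminorm indices $\gamma_k=(a_k,0)$ or $(0,a_k)$ in cases (5)--(6), and the continuity in case (3), since the series converges uniformly. Cases (1)--(6) are then the six products.

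The main obstacle is justifying the decay bound cleanly in the bisingular setting. Specifically, we need to show that $|y_i|^{2N_i}\lesssim\sum_{|\alpha_i|=N_i}|q^{\alpha}(y)|^{?}$ using only functions independent of the other variable, and that the $\Delta$'s in one factor do not destroy the localization $\eta(t_j\lambda_{\xi_j})$ in the other. For the latter, we rely on the product identity in the proof of Proposition \ref{prop4.2}, noting that $\Delta_1$ commutes with functions of $\lambda_{\xi_2}$. For the former, we would first try the quantity $\sum_k|p_k(y_1)|^2\asymp|y_1|^2$ from strong admissibility, and expand its $N$-th power, which is a polynomial in $p_k,\bar p_k$. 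We must check that $\bar p_k$ is again of the admissible form, via the conjugate representation, so that the resulting $q^\alpha$ lie in the fixed family.
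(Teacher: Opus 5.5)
Your proposal is correct and is essentially the paper's argument. It uses the same dyadic localization $\sigma\,\eta_{l_1}(\lambda_{\xi_1})\eta_{l_2}(\lambda_{\xi_2})$, the same per-piece bounds on $q^{\alpha}\kappa_{l_1,l_2,x}$ from difference operators (Corollary~\ref{cor4.4} with Lemma~\ref{lem4.7}) together with $|z_1|^{a_1}|z_2|^{a_2}\lesssim\sum|q^{\alpha_1,\alpha_2}(z)|$, and the same four-region split of the $l_i$-sums at thresholds fixed by $|y_i|$, which is exactly your product of minima; pairing $|y_i|^{2N_i}$ with only $N_i$ differences is lossy but valid since $|y_i|$ is bounded, and it is what reproduces the stated exponent $2(n_i+m_i)/\rho_i$. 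The obstacle you flag is not real: strong admissibility gives $|y_1|\lesssim\max_k|p_k(y_1)|$ directly, so no conjugates $\bar p_k$ are needed. In the cases $n_i+m_i=0$ you must use your $\epsilon$-free Weyl-law bound rather than Lemma~\ref{lem4.7} with $m_i'=-n_i-\epsilon$, which only gives $|y_i|^{-c\epsilon}$; done that way, your logarithmic cases are cleaner than the paper's, which passes from $|z_i|^{-\epsilon_i}$ to $|\ln|z_i||$ by letting $\epsilon_i\to0$.
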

\begin{proof}
Let $\eta_0,\eta_1\in C^{\infty}_0(\mathbb R)$ be smooth functions supported in $[-1,1]$ and $[\frac{1}{2},2]$ respectively, taking values in $[0,1]$, and such that $$\forall\lambda>0\ \ \ \sum_{l=0}^{+\infty}\eta_l(\lambda)= 1, \text{ where }\ \  \eta_l(\lambda):=\eta_1(2^{-(l-1)}\lambda),l\geq 1.$$ For each $l_1,l_2\in \mathbb N_0$, we define $\sigma_{l_1,l_2}(x,\xi):=\sigma(x,\xi)\eta_{l_1}(\lambda_{\xi_1})\eta_{l_2}(\lambda_{\xi_2})$ and denote by $\kappa_{l_1,l_2,x}$ the corresponding kernel. Since $\eta_{l_1}(\lambda_{\xi_1})\eta_{l_2}(\lambda_{\xi_2})$ is smoothing, then $\sigma_{l_1,l_2}$ is smoothing too. Moreover, also the mapping $(x,y)\mapsto \kappa_{l_1,l_2,x}(y)= \kappa_x\ast\eta_{l_1}(\mathcal L_{G_1})\eta_{l_2}(\mathcal L_{G_2})\delta_{e_1}\otimes\delta_{e_2}$ is smooth, as $(x,y)\mapsto\kappa_x(y)$ is smooth on $G\times (G\setminus S)$ and $\eta_{l_1}(\mathcal L_{G_1})\eta_{l_2}(\mathcal L_{G_2})\delta_{e_1}\otimes\delta_{e_2}$  is smooth on $G$ (recall that $\mathcal L_{G_1}$ and $\mathcal L_{G_2}$ denote the Laplacians on $G_1$ and $G_2$, respectively). One has the following convergence in $C^{\infty}(G\times (G\setminus S))$  $$\kappa_x(y) =\lim_{N_1,N_2\rightarrow +\infty}\sum_{l_1=0}^{N_1}\sum^{N_2}_{l_2=0}\kappa_{l_1,l_2,x}(y)=\lim_{N_1,N_2\rightarrow+\infty}\biggl(\kappa_x\ast\sum_{l_1=0}^{N_1}\sum_{l_2=0}^{N_2}\eta_{l_1}(\mathcal L_{G_1})\eta_{l_2}(\mathcal L_{G_2})\delta_{e_1}\otimes\delta_{e_2}\biggr)(y),$$ and that the following bound holds for $y\notin S$ $$|\kappa_x(y)|\leq\sum_{l_1,l_2=0}^{+\infty}|\kappa_{l_1,l_2,x}(y)|.$$ By Corollary \ref{cor4.4} and Lemma \ref{lem4.7}, for any given $(\alpha_1,\alpha_2)\in \mathbb N^{n_P}\times\mathbb N^{n_R}$ and for any given $m_1',m_2'\in\mathbb R$ such that $m'_i +n_i<\rho_i|\alpha_i|$, we have the following \begin{equation}\label{eq4.1}\sup_{z\in G}|q^{\alpha_1,\alpha_2}(z)\kappa_{l_1,l_2,x}(z)| \lesssim\sup_{\xi\in\widehat G}\lVert\sigma_{l_1,l_2}(x,\xi)\rVert_{S^{m'_1,m'_2}_{\rho,\delta}(|\alpha_1|,|\alpha_2|),0}\end{equation} $$ \lesssim \lVert\sigma\rVert_{S^{m_1,m_2}_{\rho,\delta}(|\alpha_1|,|\alpha_2|),0}2^{-(l_1-1)\frac{m'_1-m_1}{2}}2^{-(l_2-1)\frac{m'_2-m_2}{2}}.$$
Notice that, for all $z\in G$ and for all $a_1,a_2\in \mathbb N_0$,  $$|z_1|^{a_1}|z_2|^{a_2}\lesssim\sum_{|\alpha_1|=a_1,|\alpha_2|=a_2}|q^{\alpha_1,\alpha_2}(z)|.$$ Therefore, for all $a_1,a_2\in \mathbb N_0$ and $m'_1, m'_2$ such that $m'_i+n_i< \rho_ia_i, i = 1,2$, (\ref{eq4.1}) implies \begin{equation}\label{eq4.2}|z_1|^{a_1}|z_2|^{a_2}|\kappa_{l_1,l_2,x}(z)|\lesssim\lVert\sigma\rVert_{S^{m_1,m_2}_{\rho,\delta}(a_1,a_2),0}2^{l_1\frac{m_1-m'_1}{2}}2^{l_2\frac{m_2-m'_2}{2}}.\end{equation} The previous estimate is meaningful in a neighborhood $U=(U_1 \times G_2)\cup (G_1\times U_2)$ of $S$, where $U_1$ and $U_2$ are geodesic neighborhoods of $e_1$ and $e_2$, respectively. Outside that neighborhood, the estimates in the statement are straightforward, because of the smoothness of the kernel. Since we want to study the behavior of $\kappa_x(y)$ close to the set $S$, we will consider each of the following situations \begin{itemize} \item $|z_1| < 1$ and $|z_2| < 1$; \item $|z_1| < 1$ and $|z_2|\geq 1$ (resp. $|z_1|\geq 1$ and $|z_2| < 1$). \end{itemize}\ \\ 
\textit{Case $n_i+m_i>0$ for all $i= 1,2$.} When $|z_1|<1$ and $|z_2|< 1$, we can choose $l_{0_i}\in\mathbb N_0$ such that $ 2^{-\frac{\rho_i}{2}l_{0_i}}\leq |z_i|\leq 2^{-\frac{\rho_i}{2}l_{0_i}+1}, \ i = 1,2.$ In order to derive the desired estimate we write
$$\sum^{N_1}_{l_1=0}\sum^{N_2}_{l_2=0}=\sum_{l_1\leq l_{0_1},l_2\leq l_{0_2}}+\sum_{l_1\leq l_{0_1},N_2\geq l_2>l_{0_2}}+\sum_{N_1\geq l_1>l_{0_1},l_2\leq l_{0_2}}+\sum_{N_1\geq l_1>l_{0_1},N_2\geq l_2>l_{0_2}}$$ and study the behavior of $\kappa_{l_1,l_2,x}$ separately in the cases \begin{itemize}
    \item $l_i\leq l_{0_i}$ for $i=1,2$, \item $l_i>l_{0_i}$ for $i=1,2$, \item $l_1\leq l_{0_1}$ and $l_2>l_{0_2}$ (resp. $l_2\leq l_{0_2}$ and $l_1> l_{0_1}$).\end{itemize}
When $l_i\leq l_{0_i}$ for $i = 1,2$, from (\ref{eq4.2}) we get $$\sum_{l_1\leq l_{0_1}
,l_2\leq l_{0_2}}|\kappa_{l_1,l_2,x}(z)|\lesssim\lVert\sigma\rVert_{S^{m_1,m_2}_{\rho,\delta}(a_1,a_2),0}|z_1|^{-a_1}2^{l_{1}\frac{m_1-m'_1}{2}}|z_2|^{-a_2}2^{l_{2}\frac{m_2-m'_2}{2}}.$$ Since $m_i+n_i>0$, there exist $a_i\in \mathbb N_0$ and $m'_i\in\mathbb R$, for $i = 1,2$, such that \begin{equation}\label{eq4.3}a_i<2\frac{m_i+n_i}{\rho_i}<a_i+2 \ \ \ \text{ and }\ \ \ \frac{m_i-m'_i}{\rho_i}=2\frac{m_i+n_i}{\rho_i}-a_i>0.\end{equation} Notice that $m_i'+n_i=-m_i-n_i+\rho_ia_i<\rho_i a_i$,  which yields $$\sum_{l_1\leq l_{0_1},
l_2\leq l_{0_2}}|\kappa_{l_1,l_2,x}(z)|\lesssim\rVert\sigma\rVert_{S^{m_1,m_2}_{\rho,\delta}(a_1,a_2),0}|z_1|^{-a_1}2^{l_{0_1}\frac{m_1-m'_1}{2}}|z_2|^{-a_2}2^{l_{0_2}\frac{m_2-m'_2}{2}}$$
$$\lesssim\lVert\sigma\rVert_{S^{m_1,m_2}_{\rho,\delta}(a_1,a_2),0}|z_1|^{-2\frac{m_1+n_1}{\rho_1}}|z_2|^{-2\frac{m_2+n_2}{\rho_2}}.$$ 
When $l_i>l_{0_i}$ ($l_i\leq N_i$) we make a different choice for $a_i$ and $m'_i$ in (\ref{eq4.3}) that we call $a'_i,m''_i$ in order to keep the notation $a_i,m'_i$ for the choices we made in the previous case $l_i\leq l_{0_i}$. We now choose $$a'_i=a_i+2\ \ \ \ \ \ \text{ and } \ \ \ \ \ \ \frac{m_i-m''_i}{\rho_i}=2\frac{m_i+n_i}{\rho_i}-a'_i< 0,\ \ \  i =1,2.$$ 
Notice that $m''_i+n_i=-m_i-n_i+\rho_ia_i'<\rho_ia_i'$ for $i=1,2$. Thus, we have that 
$$|\kappa_{l_1,l_2,x}(z)|\lesssim \sum_{N_1\geq l_1>l_{0_1},N_2\geq l_2>l_{0_2}}\lVert\sigma\rVert_{S^{m_1,m_2}_{\rho,\delta}(a_1',a_2'),0}|z_1|^{-a_1'}2^{l_{0_1}\frac{m_1-m''_1}{2}}|z_2|^{-a_2'}2^{l_{0_2} \frac{m_2-m''_2}{2}}$$
$$\lesssim\rVert\sigma\rVert_{S^{m_1,m_2}_{\rho,\delta}(a_1',a_2'),0}|z_1|^{-2\frac{m_1+n_1}{\rho_1}}|z_2|^{-2\frac{m_2+n_2}{\rho_2}}.$$ 
When $l_1\leq l_{0_1}$ and $l_2>l_{0_2}$  (resp. $l_2\leq l_{0_2}$ and $l_1> l_{0_1}$) we make a different choice of $a_i$ and $m'_i$ that we call $a''_i,m'''_i$ in order to keep the previous notation for the other cases. By choosing $a''_1=a_1,m'''_1=m'_1,a''_2=a'_2$ and $m'''_2=m''_2$ we get, once again from (\ref{eq4.2}), that $$\sum_{l_1\leq l_{0_1},
N_2\geq l_2>l_{0_2}}|\kappa_{l_1,l_2,x}(z)|\lesssim\rVert\sigma\rVert_{S^{m_1,m_2}_{\rho,\delta}(a_1'',a_2''),0}|z_1|^{-a''_1}2^{l_{0_1} \frac{m_1-m'''_1}{2}}|z_2|^{-a''_2}2^{l_{0_2}\frac{m_2-m'''_2}{2}}$$
$$\lesssim\lVert\sigma\rVert_{S^{m_1,m_2}_{\rho,\delta}(a_1'',a_2''),0}|z_1|^{-2\frac{m_1+n_1}{\rho_1}}|z_2|^{-2\frac{m_2+n_2}{\rho_2}}.$$
Similarly, the estimate in the case $l_2\leq l_{0_2}$ and $l_1 > l_{0_1}$ follows by exchanging the role of $l_1$ and $l_2$. Collecting the (four) estimates together we get the desired result (keeping the biggest seminorm) in the case when $|z_1|<1$ and $|z_2|<1$. \ \\
In the case when $|z_1|<1$ and $|z_2|\geq 1$, we can choose $l_{0_1}$ as before, and, once again, split the analysis into the cases $l_1 \leq l_{0_1}$ and $l_1 > l_{0_1}$. We do not split the sum in $l_2$ in this case, so we will make a single choice for $a_2'$ and $m''_2$ in such a way that $a_2'>2\frac{m_2+n_2}{\rho_2}, m''_2=-m_2-2n_2+\rho_2a_2'$ (so that $m''_2+n_2<\rho_2a_2'$ and $m_2<m_2''$ are still verified). By choosing $a_1, m'_1$ as before when $l_1\leq l_{0_1}$ and $a'_1, m''_1$ as before when $l_{1}>l_{0_1}$, we will get the result (again in terms of the biggest seminorm). Finally, the case $|z_1|\geq 1$ and $|z_2|< 1$ is proved as the last one by reversing the role of $z_1$ and $z_2$. Collecting all the estimates above, we obtain the result in terms of the biggest seminorm. \ \\ \\
\textit{Case $m_i +n_i = 0$ for $i = 1,2$.} When $|z_1|< 1$ and $|z_2|<1$ we fix $l_{0_i}$ as before and consider separately the cases as above, depending on the relation between $l_i$ and $l_{0_i}$.\ \\ When $l_i\leq l_{0_i}$ for $i = 1,2$, we choose $a_i = 0, m'_i = -n_i-\rho_i\epsilon_i$, where $\epsilon_i>0$ can be chosen arbitrarily small, for all $i=1,2$. Since $m_i'+n_i<\rho_ia_i$ and $m_i>m_i'$, we get $$\sum_{l_1\leq l_{0_1},l_2\leq l_{0_2}}|\kappa_{l_1,l_2,x}(z)|\lesssim\lVert\sigma\rVert_{S_{\rho,\delta}^{m_1,m_2}(a_1,a_2),0} 2^{l_{0_1}\frac{m_1-m_1'}{2}}2^{l_{0_2}\frac{m_2-m_2'}{2}}\lesssim\lVert \sigma\rVert_{S_{\rho,\delta}^{m_1,m_2}(a_1,a_2),0}|z_1|^{-\epsilon_1}|z_2|^{-\epsilon_2}.$$  Since $\epsilon_i>0$ can be chosen arbitrarily small, we obtain $$\sum_{l_1\leq l_{0_1},l_2\leq l_{0_2}}|\kappa_{l_1,l_2,x}(z)|\lesssim\lVert \sigma\rVert_{S_{\rho,\delta}^{m_1,m_2}(a_1,a_2),0}|\ln|z_1|| |\ln|z_2||.$$
When $l_i > l_{0_i}$ for $i = 1,2$, we choose $a'_i=2$ and $m''_i=m_i+\rho_i(2-\epsilon_i)$ for all $i=1,2$ (where $2>\epsilon_i>0$ is arbitrarily small as before). Notice that $m_i''+n_i<\rho_ia_i'$ and $m_i<m_i''$, so that from (\ref{eq4.2}) with $a'_i,m''_i$, we have
$$\sum_{N_1\geq l_1>l_{0_1},N_2 \geq l_2> l_{0_2}}|\kappa_{l_1,l_2,x}(z)|\lesssim\lVert\sigma\rVert_{S_{\rho,\delta}^{m_1,m_2}(a_1',a_2'),0} |z_1|^{-2}|z_2|^{-2}2^{l_{0_1}\frac{m_1-m_1''}{2}}2^{l_{0_2}\frac{m_2-m_2''}{2}}$$
$$\lesssim\lVert\sigma\rVert_{S_{\rho,\delta}^{m_1,m_2}(a_1',a_2'),0}|z_1|^{-\epsilon_1}|z_2|^{-\epsilon_2}.$$ Again, since $\epsilon_i>0$ can be chosen arbitrarily small, we get $$\sum_{N_1\geq l_1>l_{0_1},N_2 \geq l_2> l_{0_2}}|\kappa_{l_1,l_2,x}(z)|\lesssim\lVert \sigma\rVert_{S_{\rho,\delta}^{m_1,m_2}(a_1',a_2'),0}|\ln|z_1|| |\ln|z_2||.$$
When $l_1\leq l_{0_1}$ and $l_2 > l_{0_2}$ (resp. $l_2\leq l_{0_2}$ and $l_1 > l_{0_1}$), by choosing $a''_1=a_1, m'''_1=m'_1$ and $a''_2=a'_2, m'''_2=m_2''$, we obtain
$$\sum_{l_1\leq l_{0_1},N_2\geq l_2>l_{0_2}}|\kappa_{l_1,l_2,x}(z)|\lesssim\lVert\sigma\rVert_{S^{m_1,m_2}_{\rho,\delta}(a_1'',a_2''),0}|\ln |z_1|||\ln|z_2||.$$ Collecting the estimates together, the result when $|z_1|< 1$ and $|z_2|< 1$ follows. \ \\ When $|z_1|< 1$ and $|z_2|\geq 1$ we fix again $l_{0_1}$ as before. Recall that now we do not split the sum in $l_2$ and that we will make a single choice for $a_2$ and $m'_2$ in (\ref{eq4.2}). Then, using estimate (\ref{eq4.2}) with $a_1$ and $m'_1$ (when $l_1\leq l_{0_1}$), and $a'_1$ and $m''_1$ (when $l_1 > l_{0_1}$) as in the previous case, the result follows by choosing $a_2'=2$ and $m''_2 =m_2+\rho_2(2-\epsilon_2)$ (where $m_2'' +n_2 < \rho_2a_2'$ and $m_2<m_2''$ are still satisfied). The case $|z_1|\geq 1$ and $|z_2| < 1$ is treated as the previous one reversing the roles of $z_1$ and $z_2$. Collecting all the cases above, we get the result in terms of the biggest seminorm.\ \\ \\
\textit{Case $n_i+m_i< 0$.} The estimate in this case is a direct consequence of Corollary \ref{cor4.4}.\ \\ \\
\textit{Case $n_i + m_i > 0, n_j +m_j = 0$ for $i,j \in \{1,2\}, i\neq j$.} Without loss of generality we suppose $n_1+ m_1 > 0$ and $n_2+m_2 = 0$, since the other case is treated analogously. We then combine the strategies used in the cases $n_i + m_i > 0$ for all $i = 1,2$ and $n_i +m_i = 0$ for all $i = 1,2$. \ \\When $|z_1|< 1$ and $|z_2| < 1$ we fix again $l_{0_i}$ such that $|z_i|\sim 2^{-\frac{\rho_i}{2}l_{0_i}} , i = 1,2$. Then, for $l_i\leq l_{0_i}$, we choose $a_i\in \mathbb N_0$ and $m_i\in\mathbb R$, for all $i = 1,2$, such that $$a_1<2\frac{m_1 +n_1}{\rho_1}<a_1+2\ \ \ \text{ and }\ \ \ \frac{m_1-m'_1}{\rho_1}=2\frac{m_1+n_1}{\rho_1}-a_1>0,$$
$$a_2 = 0,\ \ \ \  m'_2 =-n_2-\rho_2\epsilon_2,$$ so that, from (\ref{eq4.2}), we obtain $$\sum_{l_1\leq l_{0_1},l_2\leq l_{0_2}}|\kappa_{l_1,l_2,x}(z)|\lesssim\lVert\sigma\rVert_{S_{\rho,\delta}^{m_1,m_2}(a_1,a_2),0}|z_1|^{-2\frac{m_1+n_1}{\rho_1}}|\ln |z_2||.$$
For $l_i > l_{0_i}$, for all $i = 1,2$, we apply (\ref{eq4.2}) with $a'_1 = a_1 + 2$ and $m''_1$ satisfying the same conditions as $m'_1$ with $a'_1$ in place of $a_1$ (where, recall, $a_1,m'_1$ are the parameters used for $l_i\leq l_{0_i}$), $a'_2 = 2$ and $m''_2 =m_2+\rho_2(2-\epsilon_2)$. We then have $$\sum_{N_1\geq l_1>l_{0_1},N_2\geq l_2>l_{0_2}}|\kappa_{l_1,l_2,x}(z)|\lesssim\lVert\sigma\rVert_{S_{\rho,\delta}^{m_1,m_2}(a_1',a_2'),0}|z_1|^{-2\frac{n_1+m_1}{\rho_1}}|\ln|z_2||.$$
For $l_1\leq l_{0_1}$ and $l_2> l_{0_2}$ ($l_2\leq l_{0_2}$ and $l_1> l_{0_1}$) we repeat the method used before, namely, we choose $a''_1=a_1,m'''_1=m'_1,a''_2=a'_2$, and $m'''_2=m''_2$ in (\ref{eq4.2}) and get
$$\sum_{l_1\leq l_{0_1},
N_2\geq l_2>l_{0_2}}|\kappa_{l_1,l_2,x}(z)|\lesssim\lVert\sigma\rVert_{S_{\rho,\delta}^{m_1,m_2}(a_1'',a_2''),0}|z_1|^{-2\frac{n_1+m_1}{\rho_1}}|\ln|z_2||.$$
Hence, collecting all the estimates we get the result when $|z_1|< 1$ and $|z_2|< 1$. \ \\ When $|z_1|< 1$ and $|z_2|\geq 1$ the proof follows by considering again only the two cases $l_1\leq l_{0_1}$ and $l_1>l_{0_1}$ (here we do not split the sum in $l_2$, so $0\leq l_2 \leq N_2$). Using the same choices as before for $a_1,a'_1,m'_1,m''_1$, and choosing $a_2'$ and $m''_2$ in (\ref{eq4.2}), (where, recall, $a_1,m'_1$ are the parameters used when $l_1\leq l_{0_1}$, while $a'_1, m''_1$ are those used for $l_1> l_{0_1}$), we obtain the desired estimates when $|z_1|< 1$ and $|z_2|\geq 1$. When $|z_1|\geq 1$ and $|z_2| < 1$ the result is proved by reversing the roles of $z_1$ and $z_2$ in the last case.\ \\ \\
\textit{Cases $n_i +m_i < 0$ and $n_j +m_j =0,\ i\neq j$; cases $n_i+m_i > 0$ and $n_j +m_j <0,\  i\neq j$.} These cases can be treated as the last one, that is, by combing the strategies used for the other cases in the different regions $|z_i|< 1,|z_j|\geq 1, i,j = 1,2 \ i\neq j$. 
\end{proof}

\section{Calculus of bisingular pseudodifferential operators}\label{sec5}
Here we state and prove the asymptotic formula for the $(\rho,\delta)$-symbol of the composition of bisingular operators as well as the one for the $(\rho,\delta)$-symbol of the adjoint of a  bisingular operator. 
These formulas constitute what we call \textit{bisingual calculus}, since they give some ``rule of calculations'' for symbols of bisingular operators, of course up to errors. As in the Euclidean and the other group settings, the calculus is needed to attack problems related with PDEs, such as, among many others, solvability, hypoellipticity, existence of parametrices.

We wish to remark that in the derivation of the calculus one of the main differences with the classical Euclidean setting, where the symplectic structure is exploited, is that a deep use of the kernel estimates is required.

\begin{theorem}[Composition formula]\label{4.1}
Let $\sigma_A\in S^{m_1,m_2}_{\rho_,\delta} (G\times\widehat{G})$ and $\sigma_B\in S^{m'_1,m'_2}_{\rho,\delta}(G\times\widehat{G})$, with 
$\rho=(\rho_1,\rho_2),\delta=(\delta_1,\delta_2)$ and $0\leq \delta_i<\rho_i\leq 1$, for $i=1,2$. Let $A:=\mathrm{Op}(a)$ and $B=\mathrm{Op}(b)$ be the corresponding pseudodifferential operators. 
For short, we write $\eta_1=\rho_1-\delta_1$ and $\eta_2=\rho_2-\delta_2$. Then the
symbol $\sigma_{AB}$ of $AB$ is asymptotically given by
$$a\#b(x,\xi):=\sigma_{AB}(x,\xi)\sim \sum_{j\geq 0}c_{m_1+m'_1-j\eta_1,m_2+m'_2-j\eta_2}(x,\xi),$$
where $c_{m_1+m'_1-j\eta_1,m_2+m'_2-j\eta_2}\in S^{m_1+m'_1-j\eta_1,m_2+m'_2-j\eta_2}_{\rho,\delta}(G\times\widehat{G})$ is equal to the sum of three terms
$$d'_{m_1+m'_1-j\eta_1,m_2+m'_2-j\eta_2}(x,\xi)+d''_{m_1+m'_1-j\eta_1,m_2+m'_2-(j+1)\eta_2}(x,\xi)+d'''_{m_1+m'_1-(j+1)\eta_1,m_2+m'_2-j\eta_2}(x,\xi),$$
which are given by the following formulas:

\begin{align*}
d'_{m_1+m'_1-j\eta_1,m_2+m'_2-j\eta_2}(x,\xi)=&
\sum_{|\alpha_1|=|\alpha_2|=j}
\frac{1}{\alpha_1!\alpha_2!}
(\Delta^{\alpha_1,\alpha_2} \sigma_A(x,\xi))\partial^{\alpha_1,\alpha_2}\sigma_B(x,\xi),\\  d''_{m_1+m'_1-j\eta_1,m_2+m'_2-(j+1)\eta_2}(x,\xi)=&\sum_{|\alpha_1|=j}\frac{1}{\alpha_1!}
\biggl(
\Delta^{\alpha_1,0}\sigma_A\circ_{\xi_2}\partial^{\alpha_1,0}\sigma_B(x,\xi) +\\ &\hspace{2cm}- \sum_{|\alpha_2|\leq j}
\frac{1}{\alpha_2!}(\Delta^{\alpha_1,\alpha_2} \sigma_A(x,\xi))\partial^{\alpha_1,\alpha_2} \sigma_B(x,\xi)\biggr),\\
d'''_{m_1+m'_1-(j+1)\eta_1,m_2+m'_2-j\eta_2}(x,\xi)=&\sum_{|\alpha_2|=j}
\frac{1}{\alpha_2!}
\biggl(\Delta^{0,\alpha_2} \sigma_A \circ_{\xi_1} \partial^{0,\alpha_2}\sigma_B(x,\xi)+\\ &\hspace{2cm}-\sum_{|\alpha_1|\leq j}\frac{1}{\alpha_1!}(\Delta^{\alpha_1,\alpha_2} \sigma_A(x,\xi))
\partial^{\alpha_1,\alpha_2}\sigma_B(x,\xi)\biggr).
\end{align*}
In particular, for any given $N\in\mathbb{N}$, the reminder
$$r_N=\sigma_{AB}-\sum_{j<N}c_{m_1+m'_1-j\eta_1,m_2+m'_2-j\eta_2}$$ belongs to the symbol class $S^{m_1+m'_1-N\eta_1,m_2+m'_2-N\eta_2}_{\rho,\delta}(G\times\widehat{G}).$
\end{theorem}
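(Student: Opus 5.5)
The plan is to follow the strategy of \cite{FP} for the $(1,0)$ case, replacing each decay gain of one order by a gain of $\eta_i=\rho_i-\delta_i$, and controlling remainders through Theorem \ref{3.8}.

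\textbf{Step 1: exact formula for the symbol.} Writing $B\varphi(x)=(\kappa_{b,x}\ast\varphi)(x)$ and composing with $A$, I get
$$\sigma_{AB}(x,\xi)=\int_G \kappa_{a,x}(y)\,\xi^*(y)\,\sigma_B(xy^{-1},\xi)\,dy,$$
understood in the distributional sense and justified by the dyadic truncations of Lemma \ref{lem4.6}. I then Taylor-expand $y\mapsto\sigma_B(xy^{-1},\xi)$ at $y=e$ with the bisingular Taylor formula of Section \ref{sec2}. I do this in two stages: first in $y_1$ to order $N$, using the functions $p^{\alpha_1}$, then in $y_2$ to order $N$, using the functions $r^{\alpha_2}$.

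Since $\int q^{\alpha}(y^{-1})\kappa_{a,x}(y)\xi^*(y)dy$ is, up to the standard relabelling, $\Delta^{\alpha}\sigma_A$, the doubly polynomial part yields $\sum_{|\alpha_1|,|\alpha_2|<N}\frac{1}{\alpha_1!\alpha_2!}\Delta^{\alpha_1,\alpha_2}\sigma_A\,\partial^{\alpha_1,\alpha_2}\sigma_B$. The mixed parts, polynomial in one variable and exact in the other, are recognised as partial compositions. For fixed $\alpha_1$, summing the $y_2$-expansion back up gives exactly $\Delta^{\alpha_1,0}\sigma_A\circ_{\xi_2}\partial^{\alpha_1,0}\sigma_B$, by the definition of $\circ_2$ and its asymptotic expansion recalled in Section \ref{sec3}; symmetrically for $\circ_{\xi_1}$. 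Thus
$$\sigma_{AB}=\sum_{|\alpha_1|<N}\tfrac{1}{\alpha_1!}\Delta^{\alpha_1,0}\sigma_A\circ_{\xi_2}\partial^{\alpha_1,0}\sigma_B+\sum_{|\alpha_2|<N}\tfrac{1}{\alpha_2!}\Delta^{0,\alpha_2}\sigma_A\circ_{\xi_1}\partial^{0,\alpha_2}\sigma_B-\sum_{|\alpha_1|,|\alpha_2|<N}(\cdots)+R_N,$$
where $R_N$ involves only Taylor remainders of order $N$ in both variables simultaneously. Regrouping by $j=\max(|\alpha_1|,|\alpha_2|)$ reproduces exactly $d'+d''+d'''$.

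\textbf{Step 2: class membership of the terms.} Membership of $d'$ follows from the elementary properties of the classes (derivatives and products). For $d''$, the bracket is precisely the $\circ_2$-remainder $r^2_{j+1}$, taken after the first $j$ terms and applied to the symbols $\Delta^{\alpha_1,0}\sigma_A$ and $\partial^{\alpha_1,0}\sigma_B$. These have orders $(m_1-\rho_1 j, m_2)$ and $(m_1'+\delta_1 j, m_2')$ respectively. The remainder estimate of Section \ref{sec3} then puts the bracket in $S^{m_1+m_1'-j\eta_1,\,m_2+m_2'-(j+1)\eta_2}_{\rho,\delta}$, possibly after taking one more term to fix the index shift. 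The term $d'''$ is symmetric.

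\textbf{Step 3: the remainder, which is the main obstacle.} One must show that $R_N$, and the tails of the other sums, lie in $S^{m_1+m'_1-N\eta_1,m_2+m'_2-N\eta_2}_{\rho,\delta}$, and that the estimates are stable under $\partial_x^\beta\Delta^\gamma$. Applying $\Delta^\gamma$ to the integral representation gives, via Proposition \ref{Leibniz}, combinations of the same form with extra factors $q^\gamma$. The remainder integrand is then $q^{\alpha}(y^{-1})\kappa_{a,x}(y)\,\xi^*(y)\,\sigma_{B,\alpha}(x,y,\xi)$ with $|\alpha_1|,|\alpha_2|\geq N$. Following \cite{F,FP}, I would split $y$ near and far from $S$ with a cutoff. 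Near $S$, the vanishing of $q^\alpha$ to order $N$ in each variable compensates the singularity in Theorem \ref{3.8}: case (3) applies to $q^\alpha\kappa_{a,x}$, and Corollary \ref{cor4.4} gives boundedness once $\rho_i N$ exceeds $m_i+n_i$. Far from $S$ the kernel is smooth by Proposition \ref{prop4.5}, which gives arbitrary decay.

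To gain the exact orders, I would not bound crudely. Instead, as in the classical $(\rho,\delta)$ argument, I would localize $\sigma_B$ dyadically in $\xi_1,\xi_2$ (Lemma \ref{lem4.7}) and absorb the loss $\langle\xi_i\rangle^{\delta_i N}$ coming from $\partial_x$-derivatives of $\sigma_B$ into the gain $\langle\xi_i\rangle^{-\rho_iN}$ coming from $\Delta$ on $\sigma_A$. This yields $\eta_i N$, and taking $N$ large and then truncating handles the finitely many intermediate terms. The delicate point is making this uniform when $\xi_1$ and $\xi_2$ are at very different scales, and handling the mixed remainders in which only one variable has a remainder. For these I would reuse the Step 2 estimate for $\circ_1,\circ_2$ remainders, with the other variable's symbol dependence treated as a parameter and its seminorms tracked.
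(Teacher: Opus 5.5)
\textbf{Steps 1 and 2 follow the paper.} The paper starts from $\sigma_{AB}(x,\xi)=\iint R_A(x,z^{-1})\xi^*(z^{-1})R_B(xz,yz)\xi^*(yz)\,dz\,dy$. After the $y$-integration this is your formula with $z=y^{-1}$. The paper Taylor expands first in $z_1$ and then in $z_2$, recognizes the one-sided pieces as the exact partial compositions $\circ_{\xi_2},\circ_{\xi_1}$, and regroups exactly as you do. Your Step 2 is correct and supplies a detail the paper leaves implicit: the bracket in $d''$ is the $\circ_2$-remainder $r^2_{j+1}$ for $\Delta^{\alpha_1,0}\sigma_A$ and $\partial^{\alpha_1,0}\sigma_B$, and one extra explicit term compensates the $(N-1)$ in the Section \ref{sec3} statement. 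Note also that, by your own Step 1, $r_N$ is exactly the double Taylor remainder. So there are no ``mixed remainders'' left to handle in Step 3.

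\textbf{The gap is in Step 3, which is where the proof actually lives.} Consider
$$r_N(x,\xi)=\sum_{|\alpha_1|=|\alpha_2|=N}\frac{1}{\alpha_1!\alpha_2!}\int_G q^{\alpha}(y^{-1})\kappa_{a,x}(y)\,\xi^*(y)\,\sigma_{B,\alpha}(x,y,\xi)\,dy .$$
Here $\sigma_{B,\alpha}$ depends on $y$, so this is not $\Delta^{\alpha}\sigma_A(x,\xi)$ times a symbol. The gain $\langle\xi_1\rangle^{-\rho_1N}\langle\xi_2\rangle^{-\rho_2N}$ into which you want to absorb the $\delta_iN$ loss is therefore not available.

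Corollary \ref{cor4.4} and Theorem \ref{3.8} only say that $q^{\alpha}\kappa_{a,x}$ is bounded or integrable near $S$. Inserting this gives $\|r_N(x,\xi)\|_{\mathcal L(\mathcal H_\xi)}\lesssim\langle\xi_1\rangle^{m_1'+\delta_1N}\langle\xi_2\rangle^{m_2'+\delta_2N}$. There is no decay coming from $A$, and the bound gets worse as $N$ grows, so passing to $\widetilde N$ and truncating cannot rescue it.

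Localizing $\sigma_B$ in $\xi$ via Lemma \ref{lem4.7} changes nothing, because $\xi$ is a fixed parameter in this integral. The frequencies that matter are those of $y\mapsto\sigma_{B,\alpha}(x,y,\xi)$. Likewise, smoothness of the kernel away from $S$ gives decay in $\xi$ only after an integration by parts.

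\textbf{What is missing is a mechanism that turns $y$-regularity into $\xi$-decay.} The paper writes
$$\xi^*(y)=\langle\xi_1\rangle^{-2s_1}\langle\xi_2\rangle^{-2s_2}(I_1+\mathcal L_{G_1})^{s_1}_{y_1}\otimes(I_2+\mathcal L_{G_2})^{s_2}_{y_2}\xi^*(y)$$
and integrates by parts. Derivatives falling on $\sigma_{B,\alpha}$ cost $\langle\xi_i\rangle^{\delta_i}$ each. Derivatives falling on $q^{\alpha}\kappa_{a,x}$ are kept integrable through Theorem \ref{3.8} by taking $\widetilde N$ large relative to $s_i$. Then $r_N=r_{\widetilde N}+\sum_{N\le j<\widetilde N}c_j$.

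If you use this route, track that $\sigma_{B,\alpha}$ carries $\widetilde N$ $x$-derivatives in each factor and so costs $\langle\xi_i\rangle^{\delta_i\widetilde N}$; the paper counts it with order $(m_1',m_2')$. Since $\widetilde N$ must grow like $2s_i/\rho_i$, the straightforward bookkeeping closes only when $\delta_i/\rho_i<1-\delta_i$.

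The ``classical $(\rho,\delta)$ argument'' you allude to is the robust alternative, but the split must be in the $y$-frequencies, not in $\xi$:
\begin{itemize}
\item Expand $y\mapsto\sigma_{B,\alpha}(x,y,\xi)$ in Fourier series on $G_1\times G_2$. The coefficient at $\pi_1\otimes\pi_2$ pairs with $\Delta^{\alpha}\sigma_A(x,\zeta)$ for irreducible components $\zeta_i\subset\bar\pi_i\otimes\xi_i$.
\item Where $\langle\pi_i\rangle\le c\langle\xi_i\rangle$, one has $\langle\zeta_i\rangle\sim\langle\xi_i\rangle$ and obtains $\langle\xi_i\rangle^{m_i+m_i'-\eta_iN+O(1)}$. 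This is where $\delta_i<\rho_i$ is used.
\item Where $\langle\pi_i\rangle>c\langle\xi_i\rangle$, the rapid decay of the coefficients in $\pi_i$, at cost $\langle\xi_i\rangle^{\delta_i}$ per order, gives arbitrary decay because $\delta_i<1$.
\end{itemize}
Since the split is made independently in each factor, the issue of $\xi_1$ and $\xi_2$ living at different scales does not arise.
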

\begin{proof}
Let $A$ and $B$ be the operators above. By using right convolution kernels, we may write
\begin{align*}ABf(x)=& \int_G(Bf)(xz)R_A(x,z^{-1})dz=\int_Gf(xy^{-1})\int_GR_B(xz,yz)R_A(x,z^{-1})dzdy=\\ =&\int_Gf(y)\int_GR_B(xz,y^{-1}xz)R_A(x,z^{-1})dzdy=\int_Gf(y)R_{AB}(x,y^{-1}x)dy,\end{align*} 
where $R_{AB}(x,y):=\int_GR_B(xz,yz)R_A(x,z^{-1})dz.$ Then we compute the symbol of $AB$ 
\begin{align*}\sigma_{AB}(x,\xi)=&\widehat{R_{AB}}(x,\xi)=\int_G\int_GR_A(x,z^{-1})R_B(xz,yz)\xi^*(y)dzdy=\\
=&\int_G\int_GR_A(x,z^{-1})\xi^*(z^{-1})R_B(xz,yz)\xi^*(yz)dzdy.\end{align*}
We now consider the Taylor expansion of $R_B(xz,yz)=R_B(x_1z_1,x_2z_2,y_1z_1,y_2z_2)$ with respect to the first variable at $z_1=e_1$, 
\begin{align*}
R_B(xz,yz)&=\sum_{|\alpha_1|<N}\frac{1}{\alpha_1!}q^{\alpha_1,0}(z_1^{-1},x_2z_2)\partial^{\alpha_1,0}R_B(x_1,x_2z_2,yz)+\\ &+\sum_{|\alpha_1|=N}\frac{1}{\alpha_1!}q^{\alpha_1,0}(z_1^{-1},x_2z_2)(R_B)_{\alpha_1}(x_1z_1,x_2z_2,yz),\end{align*} 
where $q^{\alpha_1,0}(x)=p^{\alpha_1}(x_1)$ is constant with respect to $x_2$. Taking into account that $q^{\alpha_1,0}(x_1,x_2)$ is independent of the second variable (and, analogously, 
$q^{0,\alpha_2}(x)=r^{\alpha_2}(x_2)$ is independent of the first one), we expand the previous quantity with respect to the second variable at $z_2=e_2$ and find 
\begin{align*}
R_B(xz,yz)=&\sum_{|\alpha_1|<N,|\alpha_2|<N}\frac{1}{\alpha_1!\alpha_2!}q^{\alpha_1,\alpha_2}(z_1^{-1},z_2^{-1})\partial^{\alpha_1,\alpha_2}R_B(x_1,x_2,yz)\\
&+\sum_{|\alpha_2|=N,|\alpha_1|<N}\frac{1}{\alpha_1!\alpha_2!}q^{\alpha_1,\alpha_2}(z_1^{-1},z_2^{-1})(\partial^{\alpha_1,0}R_B)_{\alpha_2}(x_1,x_2z_2,yz)\\
&+\sum_{|\alpha_2|<N,|\alpha_1|=N}\frac{1}{\alpha_1!\alpha_2!}q^{\alpha_1,\alpha_2}(z_1^{-1},z_2^{-1})\partial^{0,\alpha_2}(R_B)_{\alpha_1}(x_1z_1,x_2,yz)\\
&+\sum_{|\alpha_2|=N,|\alpha_1|=N}\frac{1}{\alpha_1!\alpha_2!}q^{\alpha_1,\alpha_2}(z_1^{-1},z_2^{-1})(R_B)_{\alpha_1,\alpha_2}(x_1z_1,x_2z_2,yz).
\end{align*}
Therefore, we have 
\begin{align*}&\sigma_{AB}(x,\xi)=\\
&=\sum_{|\alpha_1|<N,|\alpha_2|<N}\frac{1}{\alpha_1!\alpha_2!}\int_{G\times G}\xi^*(z^{-1})q^{\alpha_1,\alpha_2}(z^{-1})R_A(x,z^{-1})\xi^*(yz)\partial^{\alpha_1,\alpha_2}R_B(x,yz)dzdy\\ 
&\hspace{0.5cm}+\sum_{|\alpha_1|<N}\frac{1}{\alpha_1!}\int_{G\times G}\biggl(q^{\alpha_1,0}(z^{-1})R_A(x,z^{-1})\xi^*(z^{-1})\partial^{\alpha_1,0}R_B(x_1,x_2z_2,yz)\xi^*(yz)\\
&\hspace{2.5cm}-\sum_{|\alpha_2|<N}\frac{1}{\alpha_2!}q^{\alpha_1,\alpha_2}(z^{-1})R_A(x,z^{-1})\xi^*(z^{-1})\partial^{\alpha_1,\alpha_2}R_B(x,yz)\xi^*(yz)\biggr)dzdy \\
&\hspace{0.5cm}+\sum_{|\alpha_2|<N}\frac{1}{\alpha_2!}\int_{G\times G}\biggl(q^{0,\alpha_2}(z^{-1})R_A(x,z^{-1})\xi^*(z^{-1})\partial^{0,\alpha_2}R_B(x_1z_1,x_2,yz)\xi^*(yz)\\
&\hspace{2.5cm}-\sum_{|\alpha_1|<N}\frac{1}{\alpha_1!}q^{\alpha_1,\alpha_2}(z^{-1})R_A(x,z^{-1})\xi^*(z^{-1})\partial^{\alpha_1,\alpha_2}R_B(x,yz)\xi^*(yz)\biggr)dzdy\\ & +\sum_{|\alpha_1|=N,|\alpha_2|=N}\frac{1}{\alpha_1!\alpha_2!}\int_{G\times G}q^{\alpha_1,\alpha_2}(z^{-1})R_A(x,z^{-1})\xi^*(z^{-1})(R_B)_{\alpha_1,\alpha_2}(xz,yz)\xi^*(yz)dzdy.\end{align*}
By computing the integrals above, we obtain 
\begin{align*}
&\sigma_{AB}(x,\xi)=\sum_{|\alpha_1|<N,|\alpha_2|<N}\frac{1}{\alpha_1!\alpha_2!}(\Delta^{\alpha_1,\alpha_2}\sigma_A(x,\xi))\partial^{\alpha_1,\alpha_2}\sigma_B(x,\xi)+\\
&
    +\sum_{|\alpha_1|<N}\frac{1}{\alpha_1!}\biggl((\Delta^{\alpha_1,0}\sigma_A\circ_{\xi_2}\partial^{\alpha_1,0}\sigma_B)(x,\xi)-\sum_{|\alpha_2|<N}\frac{1}{\alpha_2!}(\Delta^{\alpha_1,\alpha_2}\sigma_A(x,\xi))
\partial^{\alpha_1,\alpha_2}\sigma_B(x,\xi)\biggr)\\ 
&+\sum_{|\alpha_2|<N}\frac{1}{\alpha_2!}\biggl((\Delta^{0,\alpha_2}\sigma_A\circ_{\xi_1}\partial^{0,\alpha_2}\sigma_B)(x,\xi)-\sum_{|\alpha_1|<N}\frac{1}{\alpha_1!}(\Delta^{\alpha_1,\alpha_2}\sigma_A(x,\xi))
\partial^{\alpha_1,\alpha_2}\sigma_B(x,\xi)\biggr)\\&+\sum_{|\alpha_1|=N,|\alpha_2|=N}\frac{1}{\alpha_1!\alpha_2!}\int_{G\times G}q^{\alpha_1,\alpha_2}(z^{-1})R_A(x,z^{-1})\xi^*(z^{-1})(R_B)_{\alpha_1,\alpha_2}(xz,yz)\xi^*(yz)dzdy.\end{align*}
Rearranging the terms, we get
\begin{align*}
&\sigma_{AB}(x,\xi)=\sum_{|\alpha_1|=|\alpha_2|<N}\frac{1}{\alpha_1!\alpha_2!}(\Delta^{\alpha_1,\alpha_2}\sigma_A(x,\xi))\partial^{\alpha_1,\alpha_2}\sigma_B(x,\xi)+\\
&+\sum_{|\alpha_1|<N}\frac{1}{\alpha_1!}\biggl((\Delta^{\alpha_1,0}\sigma_A\circ_{\xi_2}\partial^{\alpha_1,0}\sigma_B)(x,\xi)-
\sum_{|\alpha_2|\leq|\alpha_1|}\frac{1}{\alpha_2!}\Delta^{\alpha_1,\alpha_2}\sigma_A(x,\xi)\partial^{\alpha_1,\alpha_2}\sigma_{B}(x,\xi)\biggr)\\
&+\sum_{|\alpha_2|<N}\frac{1}{\alpha_2!}\biggl((\Delta^{0,\alpha_2}\sigma_A\circ_{\xi_1}\partial^{0,\alpha_2}\sigma_B)(x,\xi)-
\sum_{|\alpha_1|\leq|\alpha_2|}\frac{1}{\alpha_1!}\Delta^{\alpha_1,\alpha_2}\sigma_A(x,\xi)\partial^{\alpha_1,\alpha_2}\sigma_B(x,\xi)\biggr)\\
&+\sum_{|\alpha_1|=N,|\alpha_2|=N}\frac{1}{\alpha_1!,\alpha_2!}\int_{G\times G}q^{\alpha_1,\alpha_2}(z^{-1})R_A(x,z^{-1})\xi^*(z^{-1})(R_B)_{\alpha_1,\alpha_2}(xz,yz)\xi^*(yz)dzdy\end{align*}
$$=\sum_{j<N}\biggl(d'_{m_1+m_1'-j\eta_1,m_2+m_2'-j\eta_2}+d''_{m_1+m_1'-j\eta_1,m_2+m_2'-(j+1)\eta_2}+d'''_{m_1+m_1'-(j+1)\eta_1,m_2+m_2'-j\eta_2}\biggr)+r_N.$$
Now, we need to show that for every $N\in\mathbb{N}$, the remainder $r_N$ belongs to the bisingular class $S^{m_1+m_1'-N\eta_1,m_2+m_2'-N\eta_2}_{\rho,\delta}(G\times\widehat{G})$, that is, 
$$\sup_{x\in G}\lVert\partial^{\gamma_1,\gamma_2}\Delta^{\beta_1,\beta_2}r_N(x,\xi)\rVert_{\mathcal{L}(\mathcal{H}_{\xi})}
\lesssim\langle\xi_1\rangle^{m_1+m_1'-N\eta_1-\rho_1|\beta_1|+\delta_1|\gamma_1|}\langle\xi_2\rangle^{m_2+m_2'-N\eta_2-\rho_2|\beta_2|+\delta_2|\gamma_2|},$$ 
for any $(\gamma_1,\gamma_2)\in\mathbb{N}_0^{n_1}\times\mathbb{N}_0^{n_2},(\beta_1,\beta_2)\in\mathbb{N}_0^{n_P}\times\mathbb{N}_0^{n_R}$, 
where we recall that $\eta_i=\rho_i-\delta_i$ for $i=1,2$. 
We will consider the simpler case $\gamma_1=\gamma_2=\beta_1=\beta_2=0$. \\ Write $\xi^*(z)=\langle\xi_1\rangle^{-2s_1}\langle\xi_2\rangle^{-2s_2}(I_1+\mathcal L_{G_1})^{s_1}_{z_1}\otimes(I_2+\mathcal L_{G_2})^{s_2}_{z_2}\xi^*(z)$, for some $s_1,s_2\in\mathbb{N}_0$, to be determined. 
Performing the integration in $y$ and then integrating by parts, we get 

\begin{align*}r_N(x,\xi)=&\sum_{|\alpha_1|=N,|\alpha_2|=N}\frac{1}{\alpha_1!\alpha_2!}\int_{G\times G}q^{\alpha_1,\alpha_2}(z^{-1})R_A(x,z^{-1})\xi^*(z^{-1})(\widehat{R_B})_{\alpha_1,\alpha_2}(xz,\xi)dz\\
=&\langle\xi_1\rangle^{-2s_1}\langle\xi_2\rangle^{-2s_2}\sum_{|\alpha_1|=N,|\alpha_2|=N}\sum_{\begin{matrix}
    _{|\epsilon_1|+|\tau_1|=2s_1}\\_{|\epsilon_2|+|\tau_2|=2s_2}
\end{matrix}}c_{\epsilon_1,\epsilon_2,\tau_1,\tau_2}\frac{1}{\alpha_1!\alpha_2!}\times \\ &\hspace{2.5cm}\times\int_{G}
\partial_{z}^{\epsilon_1,\epsilon_2}(q^{\alpha_1,\alpha_2}(z^{-1})R_A(x,z^{-1}))\xi^*(z^{-1})\partial^{\tau_1,\tau_2}_z(\widehat{R_B})_{\alpha_1,\alpha_2}(xz,\xi)dz\\ 
=&\langle\xi_1\rangle^{-2s_1}\langle\xi_2\rangle^{-2s_2}\sum_{|\alpha_1|=N,|\alpha_2|=N}\sum_{\begin{matrix}_{|\epsilon_1|+|\tau_1|=2s_1}\\_{|\epsilon_2|+|\tau_2|=2s_2}
\end{matrix}}c_{\epsilon_1,\epsilon_2,\tau_1,\tau_2}\frac{1}{\alpha_1!\alpha_2!}\times \\ &\hspace{2.5cm}\times\int_{G}\widetilde{\partial}_{z^{-1}}^{\epsilon_1,\epsilon_2}(q^{\alpha_1,\alpha_2}(z^{-1})
R_A(x,z^{-1}))\xi^*(z^{-1})\partial^{\tau_1,\tau_2}_{z'=xz}(\widehat{R_B})_{\alpha_1,\alpha_2}(z',\xi)dz,\end{align*}
where $c_{\epsilon_1,\epsilon_2,\tau_1,\tau_2}$ are constants and, in the last equality, we applied the relation between left-invariant and right-invariant vector fields given by 
$\partial^{\alpha,\beta}(\phi(\cdot^{-1}))(x)=(-1)^{|\alpha|+|\beta|}(\widetilde{\partial}^{\alpha,\beta}\phi)(x^{-1})$. 
Since $(R_B)_{\alpha_1,\alpha_2}$ is the kernel of a symbol in $S^{m_1',m_2'}_{\rho,\delta}(G\times\widehat{G})$, we find 
\begin{align*}\lVert r_N(x,\xi)\rVert_{\mathcal{L}(\mathcal{H}_{\xi})}&\lesssim\sum_{|\alpha_1|=N,|\alpha_2|=N}\sum_{\begin{matrix}_{|\epsilon_1|+|\tau_1|=2s_1}\\_{|\epsilon_2|+|\tau_2|=2s_2}
\end{matrix}}\langle\xi_1\rangle^{-2s_1}\langle\xi_2\rangle^{-2s_2}\frac{1}{\alpha_1!\alpha_2!}\times \\ &\hspace{0.2cm}\times\int_G|\widetilde{\partial}^{\epsilon_1,\epsilon_2}_{z^{-1}}(q^{\alpha_1,\alpha_2}(z^{-1})R_A(x,z^{-1}))|dz\ 
\mathrm{sup}_{z'\in G}\lVert\partial^{\tau_1,\tau_2}_{z'}\widehat{(R_B)}_{\alpha_1,\alpha_2}(z',\xi)\rVert_{\mathcal{L}(\mathcal{H}_{\xi})}\\ &\lesssim\sum_{|\alpha_1|=N,|\alpha_2|=N}\sum_{\begin{matrix}_{|\epsilon_1|+|\tau_1|=2s_1}\\_{|\epsilon_2|+|\tau_2|=2s_2}
\end{matrix}}\langle\xi_1\rangle^{m_1'-2s_1+\delta_1|\tau_1|}\langle\xi_2\rangle^{m_2'-2s_2+\delta_2|\tau_2|}\frac{1}{\alpha_1!\alpha_2!}\times \\ & \hspace{0.2cm}\times\int_G|
\widetilde{\partial}_{z^{-1}}^{\epsilon_1,\epsilon_2}(q^{\alpha_1,\alpha_2}(z^{-1})R_A(x,z^{-1}))|dz\lVert\widehat{(R_B)}_{\alpha_1,\alpha_2}\rVert_{S^{m_1',m_2'}_{\rho,\delta},0,(2s_1,2s_2)}.\end{align*}
Next, we fix $N\in\mathbb{N}$ and show that there exist $\widetilde{N}\in\mathbb{N}$, $N<\widetilde{N}$, such that 
$$ r_{\widetilde{N}}(x,\xi)\in S^{m_1+m_1'-N\eta_1,m_2+m_2'-N\eta_2}_{\rho,\delta}(G\times\widehat{G}),$$ where $\eta_i=\rho_i-\delta_i, i=1,2$.\\ 
By the previous computations, we obtain \begin{align*}\lVert r_{\widetilde{N}}(x,\xi)\rVert_{\mathcal{L}(\mathcal{H}_{\xi})}&\lesssim\sum_{|\alpha_1|=\widetilde{N},|\alpha_2|=\widetilde{N}}\sum_{\begin{matrix}_{|\epsilon_1|+|\tau_1|=2s_1}\\_{|\epsilon_2|+|\tau_2|=2s_2}
\end{matrix}}\langle\xi_1\rangle^{m_1'-2s_1+\delta_1|\tau_1|}\langle\xi_2\rangle^{m_2'-2s_2+\delta_2|\tau_2|}\frac{1}{\alpha_1!\alpha_2!}\times \\ &\hspace{1cm}\times\int_G|
\widetilde{\partial}_{z^{-1}}^{\epsilon_1,\epsilon_2}(q^{\alpha_1,\alpha_2}(z^{-1})R_A(x,z^{-1}))|dz\lVert\widehat{(R_B)}_{\alpha_1,\alpha_2}\rVert_{S^{m_1',m_2'}_{\rho,\delta},0,(2s_1,2s_2)}.\end{align*}
Suppose that the above integral converges, then we want 
\begin{equation}\label{1}\begin{cases}
m_1'+\delta_1|\tau_1|-2s_1\leq m_1'+m_1-N\eta_1\\ m_2'+\delta_2|\tau_2|-2s_2\leq m_2'+m_2-N\eta_2,
\end{cases}\end{equation} 
so that 
$$\langle\xi_1\rangle^{m_1'+\delta_1|\tau_1|-2s_1}\langle\xi_2\rangle^{m_2'+\delta|\tau_2|-2s_2}\leq \langle\xi_1\rangle^{m_1'+m_1-N\eta_1}\langle\xi_2\rangle^{m_2'+m_2-N\eta_2},$$
as desired. We now prove that indeed the integral 
\begin{equation}\label{int2}
\int_{G}|\widetilde{\partial}^{\epsilon_1,\epsilon_2}_{z}(q^{\alpha_1,\alpha_2}(z)R_A(x,z))|dz\end{equation} 
converges. 

According to Theorem \ref{3.8}, we can estimate the function of $z$ inside the integral, up to a constant depending on $\lVert\sigma_{A}\rVert_{S^{m_1,m_2}_{\rho,\delta}}$, by 
\begin{itemize}
\item $|z_1|^{-2\frac{m_1+n_1+|\epsilon_1|-\rho_1\widetilde{N}}{\rho_1}}|z_2|^{-2\frac{m_2+n_2+|\epsilon_2|-\rho_2\widetilde{N}}{\rho_2}} \text{ if } m_j+n_j+|\epsilon_j|-\rho_j\widetilde{N}>0 \text{ for } j=1,2,$
\item $|z_1|^{-2\frac{m_1+n_1+|\epsilon_1|-\rho_1\widetilde{N}}{\rho_1}}|\ln|z_2|| \ \ \ \ \ \ \ \ \ \ 
 \ \ \ \ \ \text{ if } m_1+n_1+|\epsilon_1|-\rho_1\widetilde{N}>0, m_2+n_2+|\epsilon_2|-\rho_2\widetilde{N}=0,$
\item $|\ln|z_1|||z_2|^{-2\frac{m_2+n_2+|\epsilon_2|-\rho_2\widetilde{N}}{\rho_2}} \ \ \ \ \ \ \ \ \ \ \ \ \ \ \ \text{ if } m_1+n_1+|\epsilon_1|-\rho_1\widetilde{N}=0, m_2+n_2+|\epsilon_2|-\rho_2\widetilde{N}>0,$
\item $|z_1|^{-2\frac{m_1+n_1+|\epsilon_1|-\rho_1\widetilde{N}}{\rho_1}} \ \ \ \ \ \ \ \ \ \ \ \ \ \ \ \ \ \ \ \ \ \ \ \ \ \text{ if } m_1+n_1+|\epsilon_1|-\rho_1\widetilde{N}>0, m_2+n_2+|\epsilon_2|-\rho_2\widetilde{N}<0,$
\item $|z_2|^{-2\frac{m_2+n_2+|\epsilon_2|-\rho_2\widetilde{N}}{\rho_2}}  \ \ \ \ \ \ \ \ \ \ \ \ \ \ \ \ \ \ \ \ \ \ \ \ \ \text{ if } m_1+n_1+|\epsilon_1|-\rho_1\widetilde{N}<0, m_2+n_2+|\epsilon_2|-\rho_2\widetilde{N}>0,$
\item $|\ln|z_1|||\ln|z_2|| \ \ \ \ \ \ \ \ \ \ \ \ \ \ \ \ \ \ \ \ \ \ \ \ \  \text{ if } m_j+n_j+|\epsilon_j|-\rho_j\widetilde{N}=0 \text{ for } j=1,2$
\item $|\ln|z_1||  \ \ \ \ \ \ \ \ \ \ \ \ \ \ \ \ \ \ \ \ \ \ \ \ \ \ \ \ \ \ \ \ \ \ \ \ \ \ \ \text{ if } m_1+n_1+|\epsilon_1|-\rho_1\widetilde{N}=0, m_2+n_2+|\epsilon_2|-\rho_2\widetilde{N}<0,$
\item $|\ln|z_2||  \ \ \ \ \ \ \ \ \ \ \ \ \ \ \ \ \ \ \ \ \ \ \ \ \ \ \ \ \ \ \ \ \ \ \ \ \ \ \ \text{ if } m_1+n_1+|\epsilon_1|-\rho_1\widetilde{N}<0, m_2+n_2+|\epsilon_2|-\rho_2\widetilde{N}=0,$
\item $1 \ \ \ \ \ \ \ \ \ \ \ \ \ \ \ \ \ \ \ \ \ \ \ \ \ \ \ \ \ \ \ \ \ \ \ \ \ \ \ \ \ \text{ if } m_j+n_j+|\epsilon_j|-\rho_j\widetilde{N}<0 \text{ for } j=1,2.$
\end{itemize}

Moreover, in order to have the integral (\ref{int2}) convergent, we shall require the conditions 
\begin{equation}\label{3}\begin{cases}
    2(m_1+n_1+|\epsilon_1|-\rho_1\widetilde{N})<\rho_1n_1,\\
    2(m_2+n_2+|\epsilon_2|-\rho_2\widetilde{N})<\rho_2n_2.
\end{cases}
\end{equation}
Since $\delta_i<1$ for $i=1,2$, we may find nonnegative integers $s_1$ and $s_2$ (depending on $N$, which is fixed), such that 
$$\begin{cases}N\eta_1-m_1\leq 2s_1(1-\delta_1),\\
N\eta_2-m_2\leq 2s_2(1-\delta_2),\end{cases}$$ 
so that the conditions in (\ref{1}) hold for any $|\tau_1|\leq 2s_1$ and $|\tau_2|\leq 2s_2$. Then, we choose $\widetilde{N}\in\mathbb{N}$ (depending on $s_1,s_2$) large enough such that 
$$\begin{cases}2s_1+m_1+n_1(1-\frac{\rho_1}{2})<\rho_1\widetilde{N}\\2s_2+m_2+n_2(1-\frac{\rho_2}{2})<\rho_2\widetilde{N},\end{cases}$$ 
so that the conditions in (\ref{3}) are satisfied for any $|\epsilon_1|\leq 2s_1$ and $|\epsilon_2|\leq 2s_2$. 
This shows that the remainder $r_{\widetilde{N}}$ belongs to the symbol class $S^{m_1+m_1'-N\eta_1,m_2+m_2'-N\eta_2}_{\rho,\delta}(G\times\widehat{G})$\\
Now, we must show that $r_{N}\in S^{m_1+m_1'-N\eta_1,m_2+m_2'-N\eta_2}_{\rho,\delta}(G\times\widehat{G})$. Notice that 

\begin{align*}
    r_N(x,\xi)&=\sigma_{AB}(x,\xi)-\sum_{j<N}c_{m_1+m_1'-j\eta_1,m_2+m_2'-j\eta_2}(x,\xi)\\ &=\sigma_{AB}(x,\xi)-\sum_{j<\widetilde{N}}c_{m_1+m_1'-j\eta_1,m_2+m_2'-j\eta_2}(x,\xi)
+\sum_{N\leq j<\widetilde{N}}c_{m_1+m_1'-j\eta_1,m_2+m_2'-j\eta_2}(x,\xi)\\ &=r_{\widetilde{N}}(x,\xi)+\sum_{N\leq j<\widetilde{N}}c_{m_1+m_1'-j\eta_1,m_2+m_2'-j\eta_2}(x,\xi).
\end{align*}
Therefore, due to the inclusions
$$r_{\widetilde{N}}\in S^{m_1+m_1'-N\eta_1,m_2+m_2'-N\eta_2}_{\rho,\delta}(G\times\widehat{G}),$$ and
$$\sum_{N\leq j< \widetilde{N}}c_{m_1+m_1'-j\eta_1,m_2+m_2'-j\eta_2}\in S^{m_1+m_1'-N\eta_1,m_2+m_2'-N\eta_2}_{\rho,\delta}(G\times\widehat{G}),$$ 
we get $r_N\in S^{m_1+m_1'-N\eta_1,m_2+m_2'-N\eta_2}_{\rho,\delta}(G\times\widehat{G})$. Finally, since $N\in\mathbb{N}$ is arbitrary, we get the desired property.
\end{proof}

\begin{theorem}[Formula for the adjoint]
Let $\sigma\in S^{m_1,m_2}_{\rho,\delta}(G\times\widehat{G})$, with $\rho=(\rho_1,\rho_2),\delta=(\delta_1,\delta_2)$ and $0\leq \delta_i< \rho_i\leq 1$, for $i=1,2$. 
Let us denote $\eta_1=\rho_1-\delta_1$ and $\eta_2=\rho_2-\delta_2$. Then the symbol $\sigma^{(*)}$ of the operator $\mathrm{Op}(\sigma)^{\ast}$ is asymptotically given by 
$$\sigma^{(*)}(x,\xi)\sim\sum_{j\geq 0}c_{m_1-j\eta_1,m_2-j\eta_2}(x,\xi),$$ 
where $c_{m_1-j\eta_1,m_2-j\eta_2}\in S^{m_1-j\eta_1,m_2-j\eta_2}_{\rho,\delta}(G\times\widehat{G})$ is equal to the sum of three terms 
$$d_{m_1-j\eta_1,m_2-j\eta_2}'(x,\xi)+d''_{m_1-j\eta_1,m_2-(j+1)\eta_2}(x,\xi)+d'''_{m_1-(j+1)\eta_1,m_2-j\eta_2}(x,\xi),$$ 
which are given by the following formulas:
\begin{align*}
d'_{m_1-j\eta_1,m_2-j\eta_2}(x,\xi)=
&\sum_{|\alpha_1|=|\alpha_2|=j}\frac{1}{\alpha_1!\alpha_2!}\Delta^{\alpha_1,\alpha_2}\partial^{\alpha_1,\alpha_2}\sigma(x,\xi)^{\ast},\\d''_{m_1-j\eta_1,m_2-(j+1)\eta_2}(x,\xi)=
&\sum_{|\alpha_1|=j}\frac{1}{\alpha_1!}\biggl(\Delta^{\alpha_1,0}\partial^{\alpha_1,0}\sigma^{(\ast_2)}(x,\xi)-\sum_{|\alpha_2|\leq |\alpha_1|}\frac{1}{\alpha_2!}\Delta^{\alpha_1,\alpha_2}
\partial^{\alpha_1,\alpha_2}\sigma(x,\xi)^{\ast}\biggr),\\ d'''_{m_1-(j+1)\eta_1,m_2-j\eta_2}(x,\xi)=
&\sum_{|\alpha_2|=j}\frac{1}{\alpha_2!}\biggl(\Delta^{0,\alpha_2}\partial^{0,\alpha_2}\sigma^{(\ast_1)}(x,\xi)-\sum_{|\alpha_1|\leq |\alpha_2|}\frac{1}{\alpha_1!}\Delta^{\alpha_1,\alpha_2}
\partial^{\alpha_1,\alpha_2}\sigma(x,\xi)^{\ast}\biggr).
\end{align*} 
In particular, for any given $N\in\mathbb{N}$, the reminder
$$r_N=\sigma^{(*)}-\sum_{j<N}c_{m_1-j\eta_1,m_2-j\eta_2}$$ belongs to the symbol class $S_{\rho,\delta}^{m_1-N\eta_1,m_2-N\eta_2}(G\times\widehat{G}).$
\end{theorem}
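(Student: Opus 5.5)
The plan is to follow the proof of the composition formula (Theorem \ref{4.1}) step by step, with the adjoint's right-convolution kernel in place of the composed kernel. The first step is the kernel of $A^*$, where $A=\mathrm{Op}(\sigma)$ with kernel $R_A$. From $(A^*f,g)=(f,Ag)$ one gets $R_{A^*}(x,y)=\overline{R_A(xy^{-1},y^{-1})}$. Hence
$$\sigma^{(*)}(x,\xi)=\int_G \overline{R_A(xy^{-1},y^{-1})}\,\xi^*(y)\,dy .$$

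Next I would Taylor expand the function $w\mapsto R_A(w,y^{-1})$ around $w=x$ along $xy^{-1}$, in two stages as in Theorem \ref{4.1}. I expand first in the $G_1$-variable at $y_1=e_1$, using the functions $q^{\alpha_1,0}$, which do not depend on $x_2$. Then I expand each term in the $G_2$-variable at $y_2=e_2$, using the functions $q^{0,\alpha_2}$. This gives four groups of terms, and I identify them as follows.
\begin{itemize}
\item The full double sum over $|\alpha_1|,|\alpha_2|<N$ gives the terms $\frac{1}{\alpha_1!\alpha_2!}\Delta^{\alpha_1,\alpha_2}\partial^{\alpha_1,\alpha_2}\sigma(x,\xi)^*$. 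The point is that multiplying the kernel by $q^{\alpha}$ yields the difference operator applied to the symbol, and taking the conjugate transpose gives $\sigma^*$.
\item The mixed groups come from expanding in one factor only. In these, the integral over the other factor reproduces the partial adjoint, so they give $\Delta^{\alpha_1,0}\partial^{\alpha_1,0}\sigma^{(*_2)}$ and $\Delta^{0,\alpha_2}\partial^{0,\alpha_2}\sigma^{(*_1)}$, minus the already-counted double-expansion terms.
\end{itemize}
Regrouping by $j=\max$, exactly as in the rearrangement step of Theorem \ref{4.1}, produces $d'$, $d''$ and $d'''$. Their orders follow from the one-variable $(\rho_i,\delta_i)$ adjoint expansion on $G_i$: the remainder of the partial adjoint beyond order $j$ gains $\eta_2$, respectively $\eta_1$. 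That this one-variable expansion holds uniformly in $(x_j,\xi_j)$ with the bisingular seminorms is where Theorem \ref{3.8} enters, applied to the partial kernels $R^1_a$ and $R^2_a$.

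The main step, and the obstacle, is the remainder term. It has the form
$$\int_G q^{\alpha_1,\alpha_2}(y)\,\overline{R_{A,\alpha_1,\alpha_2}(x',y^{-1})}\,\xi^*(y)\,dy, \qquad |\alpha_1|=|\alpha_2|=\widetilde N,$$
where $x'$ lies on the Taylor path between $x$ and $xy^{-1}$.

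To estimate it I would insert $\xi^*(y)=\langle\xi_1\rangle^{-2s_1}\langle\xi_2\rangle^{-2s_2}(I_1+\mathcal L_{G_1})^{s_1}\otimes(I_2+\mathcal L_{G_2})^{s_2}\xi^*(y)$ and integrate by parts. The derivatives then land on $q^{\alpha}$ times the kernel, and on the $x'$-dependence, where each derivative costs $\langle\xi_i\rangle^{\delta_i}$.

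Theorem \ref{3.8} then bounds the resulting integrand by $|y_1|^{-2(m_1+n_1+|\epsilon_1|-\rho_1\widetilde N)/\rho_1}$ times the analogous power in $y_2$, or by the logarithmic variants. This is integrable once $\widetilde N$ is large, precisely under condition \eqref{3}. Choosing $s_i$ with $N\eta_i-m_i\le 2s_i(1-\delta_i)$ gives the decay $\langle\xi_1\rangle^{m_1-N\eta_1}\langle\xi_2\rangle^{m_2-N\eta_2}$, which is where $\delta_i<1$ is used.

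The same argument applies after applying $\partial_x^\gamma\Delta^\beta$, using the Leibniz rule of Proposition \ref{Leibniz}. Finally, $r_N=r_{\widetilde N}+\sum_{N\le j<\widetilde N}c_j$ places $r_N$ in $S^{m_1-N\eta_1,m_2-N\eta_2}_{\rho,\delta}$.

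The difficulty I expect is controlling the $x'$-dependence of the Taylor remainder kernel uniformly, so that Theorem \ref{3.8} applies with seminorms independent of $x'$. I would handle this by representing the remainder as an integral over $x'$ of kernels of symbols $\partial^{\alpha}\sigma(x',\cdot)$, each lying in $S^{m_1+\delta_1\widetilde N,\,m_2+\delta_2\widetilde N}_{\rho,\delta}$. The extra $\delta_i\widetilde N$ loss is absorbed, since $\rho_i\widetilde N-\delta_i\widetilde N=\eta_i\widetilde N$ still grows.
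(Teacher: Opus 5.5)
Your proposal is correct and follows essentially the same route as the paper. Both arguments use the kernel identity $R_{A^*}(x,y)=\overline{R_A(xy^{-1},y^{-1})}$ and a two-stage Taylor expansion whose four groups are identified with the $\sigma^*$-terms and the partial adjoints $\sigma^{(*_2)},\sigma^{(*_1)}$; the remainder is handled by integrating by parts against $(I_1+\mathcal L_{G_1})^{s_1}\otimes(I_2+\mathcal L_{G_2})^{s_2}$ with Theorem~\ref{3.8} controlling the integrand, and the proof closes with $r_N=r_{\widetilde N}+\sum_{N\le j<\widetilde N}c_j$.

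Your tracking of the $\delta_i\widetilde N$ loss from the $x$-derivatives in the Taylor remainder is more careful than the paper's, which writes $\rho_i\widetilde N$ where only $\eta_i\widetilde N$ is available. Two points are off but harmless. Your $(1-\delta_i)$ condition on $s_i$ is borrowed from the composition proof and is only sufficient here, since in the adjoint all integration-by-parts derivatives stay inside the $y$-integral. Also, $\Delta^{\beta}$ acts on the remainder simply by multiplying the kernel by $q^{\beta}$, not through Proposition~\ref{Leibniz}.
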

\begin{proof}
Since the kernel of $\sigma^{(*)}$ satisfies $\kappa_{\sigma^{(*)}}(x,y)=\overline{\kappa_{\sigma}}(xy^{-1},y^{-1})$, by taking the Fourier transform with respect to the second variable we have 
$$\sigma^{(*)}(x,\xi)=\int_G\overline{\kappa_{\sigma}}(xy^{-1},y^{-1})\xi_1^*(y_1)\otimes\xi_2^*(y_2)dy.$$
We shall expand $\overline{\kappa_{\sigma}}(xy^{-1},y^{-1})=\overline{\kappa_{\sigma}}(x_1y_1^{-1},x_2y_2^{-1},y^{-1})$ in the first variable at $y_1=e_1$ and then, in the second variable at $y_2=e_2$:
\begin{align*}
\overline{\kappa_{\sigma}}(xy^{-1},y^{-1})
&=\sum_{|\alpha_1|<N,|\alpha_2|<N}\frac{1}{\alpha_1!\alpha_2!}q^{\alpha_1,\alpha_2}(y)\partial^{\alpha_1,\alpha_2}\overline{\kappa_{\sigma}}(x,y^{-1})\\ 
&+\sum_{|\alpha_1|<N,|\alpha_2|=N}\frac{1}{\alpha_1!\alpha_2!}q^{\alpha_1,\alpha_2}(y)\overline{(\partial^{\alpha_1,0}\kappa_{\sigma})_{\alpha_2}}(x_1,x_2y_2^{-1},y^{-1})\\ 
&+\sum_{|\alpha_1|=N,|\alpha_2|<N}\frac{1}{\alpha_1!\alpha_2!}q^{\alpha_1,\alpha_2}(y)\overline{\partial^{0,\alpha_2}(\kappa_{\sigma})_{\alpha_1}}(x_1y_1^{-1},x_2,y^{-1})\\ 
&+\sum_{|\alpha_1|=N,|\alpha_2|=N}\frac{1}{\alpha_1!\alpha_2!}q^{\alpha_1,\alpha_2}(y)\overline{(\kappa_{\sigma})_{\alpha_1,\alpha_2}}(x_1y_1^{-1},x_2y_2^{-1},y^{-1})\\ 
&= I+II+III+IV.
\end{align*}
The first term is the kernel of the operator whose symbol is given by
$$\sum_{|\alpha_1|<N,|\alpha_2|<N}\frac{1}{\alpha_1!\alpha_2!}\Delta^{\alpha_1,\alpha_2}\partial^{\alpha_1,\alpha_2}\sigma(x,\xi)^*.$$ 
We may write $II$ as 
$$II=\sum_{|\alpha_1|<N}\frac{1}{\alpha_1!}\biggl(q^{\alpha_1,0}(y_1)\overline{\partial^{\alpha_1,0}\kappa_{\sigma}}(x_1,x_2y_2^{-1},y^{-1})-\sum_{|\alpha_2|<N}\frac{1}{\alpha_2!}q^{\alpha_1,\alpha_2}(y)
\overline{\partial^{\alpha_1,\alpha_2}\kappa_{\sigma}}(x_1,x_2,y^{-1})\biggr),$$ 
namely $II$ is the kernel of the pseudodifferential operator with symbol 
$$\sum_{|\alpha_1|<N}\frac{1}{\alpha_1!}\biggl(\Delta^{\alpha_1,0}\partial^{\alpha_1,0}\sigma^{(\ast_2)}(x,\xi)-\sum_{|\alpha_2|<N}\frac{1}{\alpha_2!}\Delta^{\alpha_1,\alpha_2}\partial^{\alpha_1,\alpha_2}
\sigma(x,\xi)^{\ast}\biggr).$$ 
By similar arguments, one shows that $III$ is the kernel of the operator with symbol
$$\sum_{|\alpha_2|<N}\frac{1}{\alpha_2!}\biggl(\Delta^{0,\alpha_2}\partial^{0,\alpha_2}\sigma^{(\ast_1)}(x,\xi)-\sum_{|\alpha_1|<N}\frac{1}{\alpha_1!}
\Delta^{\alpha_1,\alpha_2}\partial^{\alpha_1,\alpha_2}\sigma(x,\xi)^{\ast}\biggr).$$ 
By rearranging the terms, we find 
\begin{align*}
\sigma^{(*)}(x,\xi)
&=\sum_{|\alpha_1|=|\alpha_2|<N}\frac{1}{\alpha_1!\alpha_2!}\Delta^{\alpha_1,\alpha_2}\partial^{\alpha_1,\alpha_2}\sigma(x,\xi)^{\ast}\\ 
&+\sum_{|\alpha_1|<N}\frac{1}{\alpha_1!}\biggl(\Delta^{\alpha_1,0}\partial^{\alpha_1,0}\sigma^{(\ast_2)}(x,\xi)-\sum_{|\alpha_2|\leq|\alpha_1|}\frac{1}{\alpha_2!}\Delta^{\alpha_1,\alpha_2}
\partial^{\alpha_1,\alpha_2}\sigma(x,\xi)^{\ast}\biggr)\\
&+\sum_{|\alpha_2|<N}\frac{1}{\alpha_2!}\biggl(\Delta^{0,\alpha_2}\partial^{0,\alpha_2}\sigma^{(\ast_1)}(x,\xi)-\sum_{|\alpha_1|\leq|\alpha_2|}\frac{1}{\alpha_1!}\Delta^{\alpha_1,\alpha_2}
\partial^{\alpha_1,\alpha_2}\sigma(x,\xi)^{\ast}\biggr)\\
&+\sum_{|\alpha_!|=N,|\alpha_2|=N}\frac{1}{\alpha_1!\alpha_2!}\int_Gq^{\alpha_1,\alpha_2}(y)\overline{(\kappa_{\sigma})_{\alpha_1,\alpha_2}}(x_1y_1^{-1},x_2y_2^{-1},y^{-1})\xi^*_1(y_1)\otimes\xi^*_2(y_2)dy\\
&=\sum_{j<N}\biggl(d'_{m_1-j\eta_1,m_2-j\eta_2}+d''_{m_1-j\eta_1,m_2-(j+1)\eta_2}+d'''_{m_1-(j+1)\eta_1,m_2-j\eta_2}\biggr)+r_N.
\end{align*}
Now we show that the reminder $r_N$ belongs to the class $S^{m_1-N\eta_1,m_2-N\eta_2}_{\rho,\delta}(G\times\widehat{G})$, that is, 
$$\sup_{x\in G}\lVert\partial^{\gamma_1,\gamma_2}\Delta^{\beta_1,\beta_2}r_N(x,\xi)\rVert_{\mathcal{L}(\mathcal{H}_{\xi})}\lesssim\langle\xi_1\rangle^{m_1-N\eta_1-\rho_1|\beta_1|+\delta_1|\gamma_1|}
\langle\xi_2\rangle^{m_2-N\eta_2-\rho_2|\beta_2|+\delta_2|\gamma_2|},$$ 
for any $(\gamma_1,\gamma_2)\in\mathbb{N}_0^{n_1}\times\mathbb{N}_0^{n_2},(\beta_1,\beta_2)\in\mathbb{N}_0^{n_P}\times\mathbb{N}_0^{n_R}$, where $\eta_i=\rho_i-\delta_i$ for $i=1,2$. \\
Write $\xi^*(y)=\langle\xi_1\rangle^{-2s_1}\langle\xi_2\rangle^{-2s_2}(I_1+\mathcal L_{G_1})^{s_1}_{y_1}\otimes(I_2+\mathcal L_{G_2})^{s_2}_{y_2}\xi^*(y)$ for some 
$s_1,s_2\in\mathbb{N}_0$ to be chosen and integrate by parts, to obtain
\begin{align*}\partial^{\gamma_1,\gamma_2}\Delta^{\beta_1,\beta_2}r_N(x,\xi)&=\sum_{|\alpha_1|=N,|\alpha_2|=N}\frac{1}{\alpha_1!\alpha_2!}\times\\ &\hspace{1cm}\times\int_{G\times G}q^{\alpha_1+\beta_1,\alpha_2+\beta_2}(y^{-1})
\partial^{\gamma_1,\gamma_2}_x\overline{(\kappa_{\sigma})_{\alpha_1,\alpha_2}}(xy^{-1},y^{-1})\xi^*(y^{-1})dy\\ &=\langle\xi_1\rangle^{-2s_1}\langle\xi_2\rangle^{-2s_2}\sum_{|\alpha_1|=N,|\alpha_2|=N}\sum_{|\epsilon_1|=2s_1,|\epsilon_2|=2s_2}c_{\epsilon_1,\epsilon_2}\frac{1}{\alpha_1!\alpha_2!}\times \\ &\hspace{1cm}\times\int_{G}\widetilde{\partial}_{y^{-1}}^{\epsilon_1,\epsilon_2}
(q^{\alpha_1+\beta_1,\alpha_2+\beta_2}(y^{-1})\partial_{x}^{\gamma_1,\gamma_2}\overline{(\kappa_{\sigma})_{\alpha_1,\alpha_2}}(xy^{-1},y^{-1}))\xi^{\ast}(y^{-1})dy,\end{align*}
where, in the last equality, we applied the relation between left-invariant and right-invariant vector fields given by 
$\partial^{\alpha,\beta}(\phi(\cdot^{-1}))(x)=(-1)^{|\alpha|+|\beta|}(\widetilde{\partial}^{\alpha,\beta}\phi)(x^{-1})$.
Now we fix $N\in\mathbb{N}$ and show that there exists $\widetilde{N}\in\mathbb{N}$, $N<\widetilde{N}$, such that 
$$ \partial^{\gamma_1,\gamma_2}\Delta^{\beta_1,\beta_2}r_{\widetilde{N}}(x,\xi)\in S^{m_1-N\eta_1-\rho_1|\beta_1|+\delta_1|\gamma_1|,m_2-N\eta_2-\rho_2|\beta_2|+\delta_2|\gamma_2|}_{\rho,\delta}(G\times\widehat{G}),$$ 
where $\eta_i=\rho_i-\delta_i, i=1,2$. By the previous computations, we obtain \begin{align*}\lVert \partial^{\gamma_1,\gamma_2}\Delta^{\beta_1,\beta_2}r_{\widetilde{N}}(x,\xi)\rVert_{\mathcal{L}(\mathcal{H}_{\xi})}&\lesssim\sum_{|\alpha_1|=\widetilde{N},|\alpha_2|=\widetilde{N}}\sum_{|\epsilon_1|=2s_1,|\epsilon_2|=2s_2}\langle\xi_1\rangle^{-2s_1}\langle\xi_2\rangle^{-2s_2}\frac{1}{\alpha_1!\alpha_2!}\times \\ &\hspace{0.5cm}\times\int_G|\widetilde{\partial}_{y^{-1}}^{\epsilon_1,\epsilon_2}(q^{\alpha_1+\beta_1,\alpha_2+\beta_2}(y^{-1})
\partial^{\gamma_1,\gamma_2}_x\overline{(\kappa_{\sigma})_{\alpha_1,\alpha_2}}(xy^{-1},y^{-1}))|dy.\end{align*}
Suppose that the above integral converges. Then we require
\begin{equation}\label{4}
\begin{cases}
-2s_1\leq m_1-N\eta_1-\rho_1|\beta_1|+\delta_1|\gamma_1|\\ -2s_2\leq m_2-N\eta_2-\rho_2|\beta_2|+\delta_2|\gamma_2|,
\end{cases}
\end{equation} 
so that 
$$\langle\xi_1\rangle^{-2s_1}\langle\xi_2\rangle^{-2s_2}\leq \langle\xi_1\rangle^{m_1-N\eta_1-\rho_1|\beta_1|+\delta_1|\gamma_1|}\langle\xi_2\rangle^{m_2-N\eta_2-\rho_2|\beta_2|+\delta_2|\gamma_2|}.$$
Next, we prove that indeed the integral 
\begin{equation}\label{int5}
\int_{G}|\widetilde{\partial}^{\epsilon_1,\epsilon_2}_{y}(q^{\alpha_1+\beta_1,\alpha_2+\beta_2}(y)\partial^{\gamma_1,\gamma_2}_x\overline{(\kappa_{\sigma})_{\alpha_1,\alpha_2}}(xy,y))|dy
\end{equation} 
converges. 
 
According to Theorem \ref{3.8}, close to the set $S$ (where we are interested because elsewhere the kernel is smooth) we can estimate the function of $y$ inside the integral, up to a constant depending on $\lVert\sigma\rVert_{S^{m_1,m_2}_{\rho,\delta}}$, by 
$$|y_1|^{-2\frac{m_1+\delta_1|\gamma_1|+n_1+|\epsilon_1|-\rho_1\widetilde{N}-\rho_1|\beta_1|}{\rho_1}}|y_2|^{-2\frac{m_2+\delta_2|\gamma_2|+n_2+|\epsilon_2|-\rho_2\widetilde{N}-\rho_2|\beta_2|}{\rho_2}}.$$
In order to make the integral in (\ref{int5}) convergent, we require that 
\begin{equation}\label{6}
\begin{cases}
    2(m_1+\delta_1|\gamma_1|+n_1+|\epsilon_1|-\rho_1\widetilde{N}-\rho_1|\beta_1|)<\rho_1n_1,\\
    2(m_2+\delta_2|\gamma_2|+n_2+|\epsilon_2|-\rho_2\widetilde{N}-\rho_2|\beta_2|)<\rho_2n_2.
\end{cases}
\end{equation} Hence we have to choose $\widetilde N$ suitably. To do that, we first observe that we may find nonnegative integers $s_1$ and $s_2$ (depending on $N$, which is fixed) such that 
$$\begin{cases}N\eta_1-m_1+\rho_1|\beta_1|-\delta_1|\gamma_1|\leq 2s_1,\\
N\eta_2-m_2+\rho_2|\beta_2|-\delta_2|\gamma_2|\leq 2s_2,\end{cases}$$ 
so that the conditions in (\ref{4}) hold. Then, we may choose $\widetilde{N}\in\mathbb{N}$ (depending on $s_1,s_2$) large enough such that 
$$\begin{cases}2s_1+m_1+\delta_1|\gamma_1|+n_1(1-\frac{\rho_1}{2})-\rho_1|\beta_1|<\rho_1\widetilde{N}\\2s_2+m_2+\delta_2|\gamma_2|+n_2(1-\frac{\rho_2}{2})-\rho_2|\beta_2|<\rho_2\widetilde{N},\end{cases}$$ 
so that the conditions in (\ref{6}) are satisfied for any $|\epsilon_1|= 2s_1$ and $|\epsilon_2|= 2s_2$. This shows that $\partial^{\gamma_1,\gamma_2}\Delta^{\beta_1,\beta_2}r_{\widetilde{N}}$ 
belongs to the symbol class $S^{m_1-N\eta_1-\rho_1|\beta_1|+\delta_1|\gamma_1|,m_2-N\eta_2-\rho_2|\beta_2|+\delta_2|\gamma_2|}_{\rho,\delta}(G\times\widehat{G})$\\
Now, we must show that $\partial^{\gamma_1,\gamma_2}\Delta^{\beta_1,\beta_2}r_{N}\in S^{m_1-N\eta_1-\rho_1|\beta_1|+\delta_1|\gamma_1|,m_2-N\eta_2-\rho_2|\beta_2|+\delta_2|\gamma_2|}_{\rho,\delta}(G\times\widehat{G})$. 
Notice that 
\begin{align*}
    \partial&^{\gamma_1,\gamma_2}\Delta^{\beta_1,\beta_2}r_N
=\\ &=\partial^{\gamma_1,\gamma_2}\Delta^{\beta_1,\beta_2}\sigma^{(*)}-\sum_{j<N}\partial^{\gamma_1,\gamma_2}\Delta^{\beta_1,\beta_2}c_{m_1-j\eta_1,m_2-j\eta_2}\\ 
&=\partial^{\gamma_1,\gamma_2}\Delta^{\beta_1,\beta_2}\sigma^{(*)}-\sum_{j<\widetilde{N}}\partial^{\gamma_1,\gamma_2}\Delta^{\beta_1,\beta_2}c_{m_1-j\eta_1,m_2-j\eta_2}+\sum_{N\leq j<\widetilde{N}}\partial^{\gamma_1,\gamma_2}\Delta^{\beta_1,\beta_2}c_{m_1-j\eta_1,m_2-j\eta_2}\\ 
&=\partial^{\gamma_1,\gamma_2}\Delta^{\beta_1,\beta_2}r_{\widetilde{N}}+\sum_{N\leq j<\widetilde{N}}\partial^{\gamma_1,\gamma_2}\Delta^{\beta_1,\beta_2}c_{m_1-j\eta_1,m_2-j\eta_2}.
\end{align*}
From the proved inclusions
\begin{align*} \partial^{\gamma_1,\gamma_2}\Delta^{\beta_1,\beta_2}r_{\widetilde{N}}&\in S^{m_1-N\eta_1-\rho_1|\beta_1|+\delta_1|\gamma_1|,m_2-N\eta_2-\rho_2|\beta_2|+\delta_2|\gamma_2|}_{\rho,\delta}(G\times\widehat{G}),\\ \sum_{N\leq j< \widetilde{N}}\partial^{\gamma_1,\gamma_2}\Delta^{\beta_1,\beta_2}c_{m_1-j\eta_1,m_2-j\eta_2}&\in 
S^{m_1-N\eta_1-\rho_1|\beta_1|+\delta_1|\gamma_1|,m_2-N\eta_2-\rho_2|\beta_2|+\delta_2|\gamma_2|}_{\rho,\delta}(G\times\widehat{G}),\end{align*}
we obtain that $\partial^{\gamma_1,\gamma_2}\Delta^{\beta_1,\beta_2}r_N$ belongs to $S^{m_1-N\eta_1-\rho_1|\beta_1|+\delta_1|\gamma_1|,m_2-N\eta_2-\rho_2|\beta_2|+\delta_2|\gamma_2|}_{\rho,\delta}(G\times\widehat{G})$, for all $(\gamma_1,\gamma_2)\in\mathbb{N}^{n_1}_0\times\mathbb{N}_0^{n_2}$ 
and $(\beta_1,\beta_2)\in\mathbb{N}_0^{n_P}\times\mathbb{N}_0^{n_R}$. This proves that $r_N\in S^{m_1-N\eta_1,m_2-N\eta_2}_{\rho,\delta}(G\times\widehat{G})$ for all $N\in\mathbb{N}$ and concludes the proof.
\end{proof}

\begin{theorem}[Asymptotic expansion]
Let $\{\sigma_j\}_{j\in\mathbb{N}_0}$ be a sequence of symbols in the classes $S^{m_j',m_j''}_{\rho,\delta}(G\times\widehat{G})$, with $m_j',m_j''$ being sequences decreasing to $-\infty$. Then, there exists a symbol
$\sigma\in S^{m_0',m_0''}_{\rho,\delta}(G\times\widehat{G})$, unique modulo $S^{-\infty,-\infty}(G\times\widehat{G})$, such that 
$$\sigma-\sum_{j=0}^M\sigma_j\in S^{m'_{M+1},m''_{M+1}}(G\times\widehat{G}),\ \ \ \forall M\in\mathbb{N}.$$
\end{theorem}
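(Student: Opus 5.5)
We follow the classical Borel-type construction, carried out with cut-offs in the frequency weights $\langle\xi_1\rangle$ and $\langle\xi_2\rangle$ separately.

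\textbf{Construction.} Fix $\chi\in C^\infty(\mathbb R)$ with values in $[0,1]$, $\chi\equiv0$ on $[0,1]$ and $\chi\equiv1$ on $[2,\infty[$. For $t_j\to0^+$ to be chosen, put
$$\chi_j(\xi):=\chi(t_j\lambda_{\xi_1})\,\chi(t_j\lambda_{\xi_2}),\qquad \sigma:=\sum_{j\ge0}\chi_j\sigma_j .$$
Since $\chi_j$ vanishes unless $\lambda_{\xi_1},\lambda_{\xi_2}\ge t_j^{-1}$, for each fixed $\xi$ the sum is finite. Hence $\sigma$ is a well-defined function, smooth in $x$, and all $x$-derivatives and difference operators may be applied termwise.

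\textbf{Estimates on the pieces.} The first step is the analogue of Lemma \ref{lem4.7} for $1-\chi$. The symbol $\eta(\lambda):=1-\chi(\lambda)$ is compactly supported, and $\chi(t\lambda)=1-\eta(t\lambda)$, so differences of $\chi(t\lambda_{\xi_i})$ of positive order coincide up to sign with those of $\eta(t\lambda_{\xi_i})$. By Proposition \ref{prop4.2} and the Leibniz rule (Proposition \ref{Leibniz}), $\chi_j$ is then bounded in $S^{0,0}_{1,0}$, and hence in $S^{0,0}_{\rho,\delta}$, uniformly in $j$. Moreover, $\chi_j\sigma_j$ is supported where $\langle\xi_i\rangle\gtrsim t_j^{-1/2}$. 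Combining the two facts, for any seminorm index $(a,b)$,
$$\lVert\chi_j\sigma_j\rVert_{S^{m'_{j-1},m''_{j-1}}_{\rho,\delta},a,b}\le C_{a,b,j}\,t_j^{\frac{m'_{j-1}-m'_j}{2}}\,t_j^{\frac{m''_{j-1}-m''_j}{2}}.$$
By a diagonal argument, choose $t_j$ decreasing so fast that the right-hand side is at most $2^{-j}$ for all $|a|,|b|\le j$.

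\textbf{Convergence and remainder.} For fixed $M$, write
$$\sigma-\sum_{j\le M}\sigma_j=\sum_{j\le M}(\chi_j-1)\sigma_j+\sum_{j>M}\chi_j\sigma_j .$$
The first sum is smoothing, because $1-\chi_j$ is supported in $\{\lambda_{\xi_1}\le 2t_j^{-1}\}\cup\{\lambda_{\xi_2}\le 2t_j^{-1}\}$. \emph{This step deserves care.} That set is not compact in $\widehat G$: it contains, for instance, $\xi_1$ bounded with $\xi_2$ arbitrary. On such a strip the decay in $\langle\xi_2\rangle$ is not improved. To handle this, we will use a cut-off in the two variables jointly: take
$$\chi_j(\xi)=1-\bigl(1-\chi(t_j\lambda_{\xi_1})\bigr)\bigl(1-\chi(t_j\lambda_{\xi_2})\bigr)\ \text{ replaced by }\ \chi(t_j\lambda_{\xi_1}+t_j\lambda_{\xi_2}),$$
so that $1-\chi_j$ is compactly supported in $\widehat G$ and $(\chi_j-1)\sigma_j\in S^{-\infty,-\infty}$. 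The price of this choice is that the support of $\chi_j$ only gives $\langle\xi_1\rangle\langle\xi_2\rangle\gtrsim t_j^{-1/2}$, not a lower bound on each factor. This is the main obstacle. The gain $t_j^{\epsilon}$ on a term of order $(m'_j,m''_j)$ compared with order $(m'_{j-1},m''_{j-1})$ must come from $\langle\xi_1\rangle^{m'_j-m'_{j-1}}\langle\xi_2\rangle^{m''_j-m''_{j-1}}\le\max(\langle\xi_1\rangle,\langle\xi_2\rangle)^{-\min(\text{gaps})}$. Since $\max(\langle\xi_1\rangle,\langle\xi_2\rangle)\ge(\langle\xi_1\rangle\langle\xi_2\rangle)^{1/2}\gtrsim t_j^{-1/4}$, we get a small factor whenever the order drops strictly in both variables. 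We therefore first reduce to the case $m'_j>m'_{j+1}$ and $m''_j>m''_{j+1}$ strictly, by grouping terms. Symbol estimates for $\chi(t(\lambda_{\xi_1}+\lambda_{\xi_2}))$ follow from Proposition \ref{prop4.2} applied on $G$ with the Laplacian $\mathcal L_{G_1}+\mathcal L_{G_2}$, together with the fact that $\Delta_1,\Delta_2$ are admissible difference operators on $\widehat G$.

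The second sum is then dominated by $\chi_{M+1}\sigma_{M+1}$, which lies in $S^{m'_{M+1},m''_{M+1}}$, plus a series converging absolutely in every seminorm of $S^{m'_{M+1},m''_{M+1}}_{\rho,\delta}$, since its $j$-th term is bounded by $2^{-j}$. The case $M=0$ also shows $\sigma\in S^{m'_0,m''_0}_{\rho,\delta}$.

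\textbf{Uniqueness.} If $\sigma$ and $\sigma'$ both satisfy the asymptotic relation, then $\sigma-\sigma'\in S^{m'_{M+1},m''_{M+1}}$ for all $M$. Since both orders tend to $-\infty$, $\sigma-\sigma'$ lies in $\bigcap S^{m_1,m_2}=S^{-\infty,-\infty}$.
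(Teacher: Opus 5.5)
Your proof is correct, and it departs from the paper's at exactly the step you flagged.

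The paper uses the product cut-off $\sigma_j\psi(t_j\lambda_{\xi_1})\psi(t_j\lambda_{\xi_2})$. Via Proposition \ref{prop4.2} this gives a gain $t_j^{(m_0'-m_j')/2}t_j^{(m_0''-m_j'')/2}$ in each variable separately, so no strict decrease of both orders is needed. The paper then disposes of $\sum_{j\le M}(\psi(t_j\lambda_{\xi_1})\psi(t_j\lambda_{\xi_2})-1)\sigma_j$ by asserting that $\psi\psi-1$ is smoothing. As you observed, that assertion is false: $1-\psi(t_j\lambda_{\xi_1})\psi(t_j\lambda_{\xi_2})\equiv 1$ on $\{1_{G_1}\}\times\widehat{G}_2$. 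For example, take $\sigma_0=\langle\xi_1\rangle^{m_0'}\langle\xi_2\rangle^{m_0''}$ and $\sigma_j=0$ for $j\geq 1$. The paper's $\sigma$ vanishes on that strip, so $\sigma-\sigma_0\notin S^{m_1',m_1''}_{\rho,\delta}$ as soon as $m_1''<m_0''$.

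Your joint cut-off $\chi(t_j(\lambda_{\xi_1}+\lambda_{\xi_2}))$ has finitely supported complement, so the low-frequency part really is smoothing. The price is that the gain only comes through $\langle\xi\rangle_G:=(1+\lambda_{\xi_1}+\lambda_{\xi_2})^{1/2}$. That is why you need two extra ingredients:
\begin{itemize}
\item The grouping into blocks with strictly decreasing orders in both variables. This is harmless: for $M$ inside a block, the leftover $\sigma_j$ with $j>M$ already lie in $S^{m'_{M+1},m''_{M+1}}_{\rho,\delta}$.
\item One-group estimates on $G$ for a function of $\mathcal L_{G_1}+\mathcal L_{G_2}$. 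These are legitimate: $P\cup R$ is a strongly admissible collection on $G$, and $\langle\xi\rangle_G\geq\langle\xi_i\rangle$ turns $S^0_{1,0}(G)$ bounds into bisingular $S^{0,0}$ bounds.
\end{itemize}

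One step should be phrased more carefully. The small factor cannot be read off from the support of $\chi_j\sigma_j$, since $\Delta^{\alpha_1,\alpha_2}(\chi_j\sigma_j)$ is not supported in $\mathrm{supp}\,\chi_j$. Take it instead from the weighted estimate $\lVert\Delta^{\alpha_1,\alpha_2}\chi(t(\lambda_{\xi_1}+\lambda_{\xi_2}))\rVert_{\mathcal{L}(\mathcal{H}_{\xi})}\lesssim t^{g/2}\langle\xi\rangle_G^{\,g-|\alpha_1|-|\alpha_2|}$, with $g=\min(m'_{j-1}-m'_j,\,m''_{j-1}-m''_j)>0$. This estimate is available because $\chi$ vanishes near $0$. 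Combined with $\langle\xi\rangle_G\leq\langle\xi_1\rangle\langle\xi_2\rangle$ and the Leibniz rule, it gives your bound with the factor $t_j^{g/2}$.

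Alternatively, the cut-off $1-(1-\chi(t_j\lambda_{\xi_1}))(1-\chi(t_j\lambda_{\xi_2}))$ that appears in your text also has finitely supported complement. It can be handled by product estimates in each variable, as in Proposition \ref{prop4.2}, without passing to $\mathcal L_{G_1}+\mathcal L_{G_2}$.
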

\begin{proof}
Let $\psi\in C^{\infty}(\mathbb{R},[0,1])$ be such that $\psi|_{]-\infty,\frac{1}{2}[}=0$ and $\psi|_{]1,+\infty[}=1$. 
Using the Leibniz-like property and Proposition \ref{prop4.2}, for any given $(\widetilde{m_1},\widetilde{m_2})\in\mathbb{R}^2$,
$(\alpha,\beta)\in\mathbb{N}_0^{n_P}\times\mathbb{N}_{0}^{n_R},(\gamma_1,\gamma_2)\in\mathbb{N}_0^{n_1}\times\mathbb{N}_0^{n_2}$ and $t_1,t_2\in]0,1[$, we have 
\begin{align*}
&\lVert\Delta^{\alpha,\beta}\partial^{\gamma_1,\gamma_2}\sigma_j(x,\xi)\psi(t_1\lambda_{\xi_1})\psi(t_2\lambda_{\xi_2})\rVert_{\mathcal{L}(\mathcal{H}_{\mathcal{\xi}})}\\
&\lesssim\sum_{\begin{matrix}_{|\alpha|\leq|\alpha_1|+|\alpha_2|\leq 2|\alpha|}\\_{|\beta|\leq |\beta_1|+|\beta_2|\leq 2|\beta|}
\end{matrix}}\lVert\Delta^{\alpha_1,\beta_1}\partial^{\gamma_1,\gamma_2}\sigma_j(x,\xi)\rVert_{\mathcal{L}(\mathcal{H}_{\xi})}\lVert\Delta^{\alpha_2,\beta_2}\psi(t_1\lambda_{\xi_1})
\psi(t_2\lambda_{\xi_2})\rVert_{\mathcal{L}(\mathcal{H}_{\xi})}\\ 
&\lesssim\rVert\sigma_j\rVert_{S^{m_j',m_j''}_{\rho,\delta},(2|\alpha|,2|\beta|),(|\gamma_1|,|\gamma_2|)}
\sum_{\begin{matrix}_{|\alpha|\leq|\alpha_1|+|\alpha_2|\leq 2|\alpha|}\\_{|\beta|\leq |\beta_1|+|\beta_2|\leq 2|\beta|}
\end{matrix}}\langle\xi_1\rangle^{m_j'-\rho_1|\alpha_1|+\delta_1|\gamma_1|}\langle\xi_2\rangle^{m_j''-\rho_2|\beta_1|+\delta_2|\gamma_2|}\times \\ &
\hspace{10cm}\times t_1^{\frac{\widetilde{m_1}}{2}}\langle\xi_1\rangle^{\widetilde{m_1}-|\alpha_2|}t_2^{\frac{\widetilde{m_2}}{2}}\langle\xi_2\rangle^{\widetilde{m_2}-|\beta_2|}.
\end{align*}
We then choose $\widetilde{m_1}=m_0'-m_j'$ and $\widetilde{m_2}=m_0''-m_j''$, so that 
\begin{align*}
\lVert\Delta^{\alpha,\beta}&\partial^{\gamma_1,\gamma_2}\sigma_j(x,\xi)\psi(t_1\lambda_{\xi_1})\psi(t_2\lambda_{\xi_2})\rVert_{\mathcal{L}(\mathcal{H}_{\xi})}\\
& \lesssim\lVert\sigma_j\rVert_{S^{m_j',m_j''}_{\rho,\delta},(2|\alpha|,2|\beta|),(|\gamma_1|,|\gamma_2|)}t_1^{\frac{m_0'-m_j'}{2}}t_2^{\frac{m_0''-m_j''}{2}}\langle\xi_1
\rangle^{m_0'-\rho_1|\alpha|+\delta_1|\gamma_1|}\langle\xi_2\rangle^{m_0''-\rho_2|\beta|+\delta_2|\gamma_2|},
\end{align*}
which gives, for any given $a=(a_1,a_2), b=(b_1,b_2)\in \mathbb{N}_0^2$ and $t_1,t_2\in ]0,1[$ 
$$\lVert\sigma_j(x,\xi)\psi(t_1\lambda_{\xi_1})\psi(t_2\lambda_{\xi_2})\rVert_{S^{m_0',m_0''}_{\rho,\delta},a,b}\leq C_{a,b,m_j',m_j'',\sigma_j}t_1^{\frac{m_0'-m_j'}{2}}t_2^{\frac{m_0''-m_j''}{2}}.$$
We now choose a decreasing sequence $t_j$, such that for all $j\in\mathbb{N}_0$, we have 
$$t_j\in ]0,2^{-j}[ \text{ and } C_{(j,j),(j,j),m_j',m_j'',\sigma_j}t_1^{\frac{m_0'-m_j'}{2}}t_2^{\frac{m_0''-m_j''}{2}}\leq 2^{-j},$$
and define $\widetilde{\sigma_j}(x,\xi):=\sigma_j(x,\xi)\psi(t_j\lambda_{\xi_1})\psi(t_j\lambda_{\xi_2})$.
For any $l\in\mathbb{N}_0$, the sum 
$$\sum_{j=0}^{+\infty}\lVert\widetilde{\sigma_j}(x,\xi)\rVert_{S^{m_0',m_0''}_{\rho,\delta},(l,l),(l,l)}\leq\sum_{j=0}^l\lVert\widetilde{\sigma_j}(x,\xi)
\rVert_{S_{(\rho,\delta)}^{m_0',m_0''},(l,l),(l,l)}+\sum_{j=l+1}^{+\infty}2^{-j}<+\infty.$$
As $S^{m_0',m_0'}_{\rho,\delta}(G\times\widehat{G})$ is a Fréchet space, we find that $\sigma:=\sum_{j=0}^{+\infty}\widetilde{\sigma_j}$ belongs to
$S^{m_0',m_0''}_{\rho,\delta}(G\times\widehat{G})$. Starting the summation at $j=M+1$, the same proof gives $\sum_{j=M+1}^{+\infty}\widetilde{\sigma_j}\in S^{m_{M+1}',m_{M+1}''}_{\rho,\delta}(G\times\widehat{G})$. 
Hence, we have that
$$\sigma-\sum_{j=0}^{M}\sigma_j=\sum_{j=0}^M(\psi(t_j\lambda_{\xi_1})\psi(t_j\lambda_{\xi_2})-1)\sigma_j+\sum_{j=M+1}^{+\infty}\widetilde{\sigma_j}$$ 
belongs to $S^{m_{M+1}',m_{M+1}''}_{\rho,\delta}(G\times\widehat{G})$ for every $M\in\mathbb{N}$, since $\psi(t_j\lambda_{\xi_1})\psi(t_j\lambda_{\xi_2})-1$ is smoothing. 
To show that $\sigma$ is unique up to smoothing operators, we take another symbol $\tau$ with the same asymptotic expansion as $\sigma$ and observe that, for any given $M\in\mathbb{N}$,
$$\sigma-\tau=\biggl(\sigma-\sum_{j=0}^M\sigma_j\biggr)-\biggl(\tau-\sum_{j=0}^M\sigma_j\biggr)\in S^{m'_{M+1},m''_{M+1}}_{\rho,\delta}(G\times\widehat{G}),$$ 
which shows that $\sigma=\tau$ modulo $S^{-\infty,-\infty}(G\times\widehat{G})$ and proves the result.
\end{proof}

\section{Bi(hypo)ellipticity}\label{sec6}
In this section, we give a description of bielliptic and bihypoelliptic symbols on $G=G_1\times G_2$. We start with a proposition, which gives a necessary condition for the inverse symbol to belong to a certain bisingular class. At first, we briefly set some notations that we will use through this section.

\begin{definition}
Let $G=G_1\times G_2$ be compact.
Given a symbol $a\in S^{m_1,m_2}_{\rho,\delta}(G\times\widehat{G})$, we say that it is \text{\rm invertible except for finitely many representations $\xi\in\widehat{G}$ uniformly in $x\in G$} if the set
$$\{\xi\in\widehat{G};\,\,a(x,\xi)\,\,\text{\rm is not invertible}\}$$
has a finite cardinality uniformly with respect to $x\in G$.

We say that the operator $a(x_1,x_2,D_1,\xi_2)$ is \text{\rm exactly invertible except for finitely many} $\xi_2\in\widehat{G}_2$ \text{\rm uniformly in $x_2\in G_2$} if the set
$$\{\xi_2\in\widehat{G}_2;\,\,a(x_1,x_2,D_1,\xi_2)\,\,\text{\rm is non invertible as an operator in $L^{m_1}_{\rho_1,\delta_1}(G_1)$}\}$$
has a finite cardinality uniformly with respect to $x_2\in G_2$. Analogously for $x_1\in G_1$.
\end{definition}
Let $a\in S^{m_1,m_2}_{\rho,\delta}(G\times\widehat{G})$ be invertible except for finitely many representations $\xi\in\widehat G$ uniformly in $x\in G$. There exist finite subsets $\widehat U_1\subset \widehat G_1 $ and $\widehat U_2\subset\widehat G_2$ such that the product $\widehat U_1\times\widehat U_2$ contains the trivial representation $1_G=1_{G_1}\otimes 1_{G_2}\in\widehat U_1\times\widehat U_2$ and every irreducible representation $\xi=\xi_1\otimes\xi_2$ where the symbol $a(x_1,x_2,\xi_1,\xi_2)$ is noninvertible (for some $(x_1,x_2)\in G_1\times G_2$). Thus, $(\widehat U_1\times\widehat G_2)\cup(\widehat G_1\times\widehat U_2)\subset \widehat G_1\times\widehat G_2$ is a set containing all the (finite) points of noninvertibility of $a$ and the subset $\widehat S$, where $$\widehat S:=(\{1_{G_1}\}\times \widehat G_2)\cup(\widehat G_1\times\{1_{G_2}\})\subset \widehat G_1\times\widehat G_2.$$ For $j=1,2$ we denote by $\chi_j=\chi_j(\xi_j)\in S^{0}_{\rho_j,\delta_j}(G_j\times\widehat G_j)$ an excision function such that $\chi_j|_{\widehat U_j}=0$ and $\chi_j=1$ outside a larger subset of $\widehat G_j$ containing $\widehat U_j$, which will be denoted by $\widehat V_j$. Thus, we have $$\chi:=\chi(\xi_1,\xi_2)=\chi_1(\xi_1)\chi_2(\xi_2)\in S^{0,0}_{\rho,\delta}(G\times\widehat G)$$ is an excision function which is equal to zero on $(\widehat U_1\times\widehat G_2)\cup(\widehat G_1\times\widehat U_2)$ and equal to one outside $(\widehat V_1\times\widehat G_2)\cup(\widehat G_1\times\widehat V_2)$. Also, notice that $1-(\chi_1+\chi_2-\chi)\in S^{-\infty,-\infty}$.

\begin{proposition}\label{a-1}Let $a\in S^{m_1,m_2}_{\rho,\delta}(G\times\widehat{G}), 1\geq \rho_i>\delta_i\geq 0, i=1,2$ and let $(m'_1,m'_2)\in\mathbb R^2$ be such that $m_i'\leq m_i$ for $i=1,2$. Assume that the symbol $a(x_1,x_2,\xi_1,\xi_2)$ is invertible except for finitely many representations $\xi\in\widehat G$ uniformly in $x\in G$ and that it satisfies 
$$\lVert a(x_1,x_2,\xi_1,\xi_2)^{-1}\rVert_{\mathcal L(\mathcal H_{\xi})}\lesssim \langle\xi_1\rangle^{-m_1'}\langle\xi_2\rangle^{-m_2'}.$$ 
In addition, if $m_i'\neq m_i$ for some $i=1,2$, assume that (in the points of invertibility) the following inequality holds 
\begin{equation}\label{horm}\begin{split}\lVert a(x_1,x_2,\xi_1,\xi_2)^{-1}\Delta^{\alpha_1,\alpha_2}\partial^{\beta_1,\beta_2}a(x_1,x_2,\xi_1,\xi_2)\rVert_{\mathcal L(\mathcal H_{\xi})}\hspace{4cm}\\
\hspace{4cm}\lesssim_{\alpha_1,\alpha_2,\beta_1,\beta_2}\langle\xi_1\rangle^{\delta_1|\beta_1|-\rho_1|\alpha_1|}\langle\xi_2\rangle^{\delta_2|\beta_2|-\rho_2|\alpha_2|},\end{split}
\end{equation} 
for all $(\alpha_1,\alpha_2)\in\mathbb N_0^{n_P}\times\mathbb N_0^{n_R}$ and $(\beta_1,\beta_2)\in\mathbb N_0^{n_1}\times\mathbb N_0^{n_2}$. Then the symbol $\chi a^{-1}$ (with the notation introduced above) belongs to $S^{-m_1',-m_2'}_{\rho,\delta}(G\times\widehat G)$.
\end{proposition}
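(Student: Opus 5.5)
We must show that for all multi-indices
$$\lVert\partial^{\beta_1,\beta_2}\Delta^{\alpha_1,\alpha_2}(\chi a^{-1})\rVert_{\mathcal L(\mathcal H_\xi)}\lesssim\langle\xi_1\rangle^{-m_1'-\rho_1|\alpha_1|+\delta_1|\beta_1|}\langle\xi_2\rangle^{-m_2'-\rho_2|\alpha_2|+\delta_2|\beta_2|}.$$
The plan is to argue by induction on $|\alpha_1|+|\alpha_2|+|\beta_1|+|\beta_2|$, following the classical H\"ormander argument for hypoelliptic symbols adapted to difference operators. The base case $\alpha=\beta=0$ is the hypothesis $\lVert a^{-1}\rVert\lesssim\langle\xi_1\rangle^{-m_1'}\langle\xi_2\rangle^{-m_2'}$, since $\chi$ is bounded and supported where $a$ is invertible.

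For $x$-derivatives, differentiating $a a^{-1}=I$ gives $\partial_{x_j}a^{-1}=-a^{-1}(\partial_{x_j}a)a^{-1}$, and iterating yields $\partial^{\beta}a^{-1}$ as a sum of products $a^{-1}(\partial^{\beta^{(1)}}a)a^{-1}\cdots(\partial^{\beta^{(k)}}a)a^{-1}$. Each factor $a^{-1}\partial^{\beta^{(i)}}a$ is controlled by \eqref{horm} with $\alpha=0$, and the final $a^{-1}$ by the base estimate, which gives the required $\delta$-gain. When $m_i'=m_i$ no \eqref{horm} is assumed, but then $\lVert a^{-1}\rVert\lesssim\langle\xi\rangle^{-m}$ together with $a\in S^{m_1,m_2}_{\rho,\delta}$ gives \eqref{horm} directly. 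In the mixed case one would verify this componentwise, or more simply note that the argument only uses \eqref{horm}.

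For difference operators, which are not derivations, I would apply the Leibniz-like formula of Proposition \ref{Leibniz} to $I=\Delta^{\alpha_1,\alpha_2}(a\cdot\chi a^{-1})$ away from the excised finite set. Modulo smoothing terms, coming from the region where $\chi\neq1$ (finitely many $\xi_1$ or $\xi_2$, hence contributions whose decay can be absorbed using that $1-(\chi_1+\chi_2-\chi)$ is smoothing and that $\chi_j$ has compact complement), this gives
$$a\,\Delta^{\alpha_1,\alpha_2}(\chi a^{-1})=-\sum_{(\gamma,\epsilon)\neq(0,0)}c^{\alpha_1}_{\gamma_1,\epsilon_1}c^{\alpha_2}_{\gamma_2,\epsilon_2}\Delta^{\gamma_1,\gamma_2}a\;\Delta^{\epsilon_1,\epsilon_2}(\chi a^{-1}),$$
where $|\epsilon_i|\le|\alpha_i|$ and $|\gamma_i|+|\epsilon_i|\ge|\alpha_i|$, and $(\epsilon_1,\epsilon_2)\neq(\alpha_1,\alpha_2)$ or $\gamma\neq0$. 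Multiplying by $a^{-1}$ and using \eqref{horm} on $a^{-1}\Delta^{\gamma}a$ (gain $\langle\xi_i\rangle^{-\rho_i|\gamma_i|}$) together with the inductive hypothesis on $\Delta^{\epsilon}(\chi a^{-1})$ (gain $\langle\xi_i\rangle^{-m_i'-\rho_i|\epsilon_i|}$) gives total decay $\langle\xi_i\rangle^{-m_i'-\rho_i(|\gamma_i|+|\epsilon_i|)}\le\langle\xi_i\rangle^{-m_i'-\rho_i|\alpha_i|}$, as required.

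A subtle point is that the term with $\epsilon=\alpha$ and $\gamma\neq0$ does appear, since $|\gamma|\le|\alpha|$ is allowed. However, it involves $\Delta^{\alpha}(\chi a^{-1})$ at the same level, multiplied by $a^{-1}\Delta^\gamma a$, which is small ($\langle\xi\rangle^{-\rho|\gamma|}$). I would treat this term either by absorption for large $\langle\xi\rangle$ (bounded $\xi$ are trivial), or, preferably, by ordering the Leibniz coefficients so that $\Delta_{q_j}$ is applied one at a time: $\Delta_{q_j}(a^{-1})=-(a+\Delta_{q_j}a)^{-1}$-type identities in the spirit of $\Delta_{q}(uv)$. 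The mixed $\partial^\beta\Delta^\alpha$ case follows by applying the $x$-derivative argument to the difference identity. Finally, a Neumann-series invertibility of $I+\sum c\,a^{-1}\Delta^\gamma a$ for large $\langle\xi_1\rangle,\langle\xi_2\rangle$ handles the absorption. This absorption step, in particular making it uniform when only one of $\langle\xi_1\rangle,\langle\xi_2\rangle$ is large (the bisingular, non-Euclidean difficulty), is where I expect the main obstacle; there I would use the excision by $\chi$ (which kills small $\xi_1$ and small $\xi_2$ separately) to ensure both weights are large on the support.
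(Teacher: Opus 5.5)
Your plan is essentially the paper's proof. It uses $x$-derivatives of $a^{-1}$ via the ordinary Leibniz rule and \eqref{horm}, then induction on $|\alpha_1|+|\alpha_2|$ through the Leibniz-like formula of Proposition~\ref{Leibniz} applied to $a\cdot a^{-1}$, absorption of the same-height terms where $\langle\xi_1\rangle^{-\rho_1}+\langle\xi_2\rangle^{-\rho_2}$ is small, and the excision $\chi$. Two small differences: the paper works with $b=a^{-1}$ and multiplies by $\chi$ only at the end, and the term you call smoothing is really $a^{-1}\Delta^{\alpha_1,\alpha_2}\chi$, which is not smoothing but already has the target order.

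The uniformity point you flag is real, and the paper also passes over it. The absorption constant depends on $(|\alpha_1|,|\alpha_2|)$ while $\chi$ is fixed, and same-height terms with $\beta_2=0$ carry only $\langle\xi_1\rangle^{-\rho_1|\beta_1|}$, which is not small when $\xi_1$ stays in a finite set and $\xi_2\to\infty$; so your idea of using $\chi$ to make both weights large on its support does not settle it. A one-sided argument closes the gap. For $\xi_1$ in a finite set, $\Delta^{\alpha_1,0}$ is a bounded combination of values at finitely many $\xi_1'\otimes\xi_2$ and commutes with $\Delta^{0,\alpha_2}$. For pure $\Delta_2$-differences, every same-height term carries $\langle\xi_2\rangle^{-\rho_2|\beta_2|}$ with $\beta_2\neq 0$, so it can be absorbed for $\langle\xi_2\rangle$ large; the case with the roles of the two factors exchanged is symmetric.
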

\begin{proof}
Let us first denote $$b(x_1,x_2,\xi_1,\xi_2):=\begin{cases}
    a(x_1,x_2,\xi_1,\xi_2)^{-1} \ \ \ \text{ when }\ \ \ (\xi_1,\xi_2)\notin\widehat U_1\times\widehat U_2\\
0\ \ \ \ \ \ \ \ \ \ \ \ \ \ \ \ \ \ \ \ \ \ \ \ \text{ when } \ \ \ (\xi_1,\xi_2)\in\widehat U_1\times\widehat U_2\end{cases}$$ and estimate $\partial^{\beta_1,\beta_2}b$ by induction. Assume that we have proved 
$$\lVert\partial^{\beta_1,\beta_2}b(x_1,x_2,\xi_1,\xi_2)\rVert_{\mathcal L(\mathcal H_{\xi})}\leq C_{\beta_1,\beta_2}\langle\xi_1\rangle^{-m_1'+\delta_1|\beta_1|}\langle\xi_2\rangle^{-m_2'+\delta_2|\beta_2|},$$ 
whenever $|\beta_1|,|\beta_2|\leq k$. Since $\partial^{\beta_1+e_{j_1},\beta_2}(a(x_1,x_2,\xi_1,\xi_2)b(x_1,x_2,\xi_1,\xi_2))=0$, by the usual Leibniz formula we get 
$$a\partial^{\beta_1+e_{j_1},\beta_2}b=-\sum_{\begin{matrix}_{
    \gamma_1+\epsilon_1=\beta_1+e_{j_1},|\epsilon_1|\leq|\beta_1|}\\_{\gamma_2+\epsilon_2=\beta_2}
\end{matrix}}C_{\gamma_1,\gamma_2,\epsilon_1,\epsilon_2}(\partial^{\gamma_1,\gamma_2}a)(\partial^{\epsilon_1,\epsilon_2}b),$$ where $|e_{j_1}|=1$. 
Using (\ref{horm}) and the inductive hypothesis, we obtain 
\begin{align*}
&\lVert \partial^{\beta_1+e_{j_1},\beta_2}b(x_1,x_2,\xi_1,\xi_2)\rVert_{\mathcal L(\mathcal H_{\xi})}
\lesssim \sum_{\begin{matrix}_{
    \gamma_1+\epsilon_1=\beta_1+e_{j_1},|\epsilon_1|\leq k}\\_{\gamma_2+\epsilon_2=\beta_2}
\end{matrix}}\lVert a^{-1}\partial^{\gamma_1,\gamma_2}a\rVert_{\mathcal L(\mathcal H_{\xi})}\lVert\partial^{\epsilon_1,\epsilon_2}b\rVert_{\mathcal L(\mathcal H_{\xi})} \\  & \lesssim\sum_{\gamma_1,\gamma_2,\epsilon_1,\epsilon_2}\langle\xi_1\rangle^{\delta_1|\gamma_1|}\langle\xi_2\rangle^{\delta_2|\gamma_2|}\langle\xi_1\rangle^{-m_1'+\delta_1|\epsilon_1|}\langle\xi_2\rangle^{-m_2'+\delta_2|\epsilon_2|}\lesssim\langle\xi_1\rangle^{-m_1'+\delta_1(k+1)}\langle\xi_2\rangle^{-m_2'+\delta_2k}.
\end{align*}
and, analogously, for $|e_{j_2}|=1$, we have $$\lVert\partial^{\beta_1,\beta_2+e_{j_2}}b(x_1,x_2,\xi_1,\xi_2)\rVert_{\mathcal L(\mathcal H_{\xi})}\lesssim\langle\xi_1\rangle^{-m_1'+\delta_1k}\langle\xi_2\rangle^{-m_2'+\delta_2(k+1)}.$$ Now, we want to estimate $\partial^{\beta_1+e_{j_1},\beta_2+e_{j_2}}b$. Since $\partial^{\beta_1+e_{j_1},\beta_2+e_{j_2}}(ab)=0$, by the usual Leibniz rule we get $$a\partial^{\beta_1+e_{j_1},\beta_2+e_{j_2}}b=-\sum_{\begin{matrix}_{
    \gamma_1+\epsilon_1=\beta_1+e_{j_1},|\epsilon_1|\leq k}\\_{\gamma_2+\epsilon_2=\beta_2+e_{j_2},|\epsilon_2|\leq k}
\end{matrix}}C_{\gamma_1,\gamma_2,\epsilon_1,\epsilon_2}(\partial^{\gamma_1,\gamma_2}a)(\partial^{\epsilon_1,\epsilon_2}b)+$$ 
$$-\sum_{\gamma_2+\epsilon_2=\beta_2+e_{j_2},|\epsilon_2|\leq k}C_{\gamma_2,\epsilon_1,\epsilon_2}(\partial^{0,\gamma_2}a)(\partial^{\beta_1+e_{j_1},\epsilon_2}b)-\sum_{\gamma_1+\epsilon_1=\beta_1+e_{j_1},|\epsilon_1|\leq k}C_{\gamma_1,\epsilon_1,\epsilon_2}(\partial^{\gamma_1,0}a)(\partial^{\epsilon_1,\beta_2+e_{j_2}}b),$$ so that we obtain the estimate 
$$\lVert \partial^{\beta_1+e_{j_1},\beta_2+e_{j_2}}b(x_1,x_2,\xi_1,\xi_2)\rVert_{\mathcal L(\mathcal H_{\xi})} \lesssim \langle\xi_1\rangle^{-m_1'+\delta_1(k+1)}\langle\xi_2\rangle^{-m_2'+\delta_2(k+1)},$$ which is the desired inequality $$\lVert\partial^{\widetilde\beta_1,\widetilde\beta_2}b(x_1,x_2,\xi_1,\xi_2)\rVert_{\mathcal L(\mathcal H_{\xi})}\lesssim\langle\xi_1\rangle^{-m_1'+\delta_1|\widetilde\beta_1|}\langle\xi_2\rangle^{-m_2'+\delta_2|\widetilde\beta_2|},$$ for any $\widetilde\beta_1,\widetilde\beta_2$, where $|\widetilde\beta_1|=|\widetilde\beta_2|\leq k+1$.

Let us now estimate $\Delta^{\alpha_1,\alpha_2}b$ when $|\alpha_1|=k_1,|\alpha_2|=k_2$ are arbitrary but fixed. Our purpose is to use an inductive argument on $k_1+k_2$. Since outside $\widehat U_1\times\widehat U_2$ we have the identity $a(x_1,x_2,\xi_1,\xi_2)b(x_1,x_2,\xi_1,\xi_2)=I$, then $\Delta^{\alpha_1,\alpha_2}(ab)=0$ and, by using the finite Leibniz formula in Proposition \ref{Leibniz}, we get
$$\sum_{\begin{matrix}_{|\beta_1|,|\gamma_1|\leq k_1\leq|\beta_1|+|\gamma_1|}\\ _{ |\beta_2|,|\gamma_2|\leq k_2\leq |\beta_2|+|\gamma_2|}\end{matrix}}c^{\alpha_1}_{\beta_1,\gamma_1}c^{\alpha_2}_{\beta_2,\gamma_2}\Delta^{\beta_1,\beta_2}a\Delta^{\gamma_1,\gamma_2}b=0,$$
which, more precisely, can be written as $$\Delta^{\alpha_1,\alpha_2}b=-\sum_{\begin{matrix}_{|\gamma_1|=k_1,|\gamma_2|=k_2}\\ _{|\beta_1|+|\beta_2|>0}\end{matrix}}c^{\alpha}_{\beta,\gamma}\ a^{-1}\Delta^{\beta_1,\beta_2}a\Delta^{\gamma_1,\gamma_2}b-\sum_{\begin{matrix}_{|\gamma_1|+|\gamma_2|<k_1+k_2}\\ _{|\beta_1|,|\beta_2|\geq0}\end{matrix}}c^{\alpha}_{\beta,\gamma}\  a^{-1}\Delta^{\beta_1,\beta_2}a\Delta^{\gamma_1,\gamma_2}b.$$
Using the inductive hypothesis and applying (\ref{horm}), we estimate each term in the latter summation by $$\lVert a^{-1}\Delta^{\beta_1,\beta_2}a\Delta^{\gamma_1,\gamma_2}b\rVert\lesssim\langle\xi_1\rangle^{-m_1'-\rho_1(|\beta_1|+|\gamma_1|)}\langle\xi_2\rangle^{-m_2'-\rho_2(|\beta_2|+|\gamma_2|)}\lesssim\langle\xi_1\rangle^{-m_1'-\rho_1|\alpha_1|}\langle\xi_2\rangle^{-m_2'-\rho_2|\alpha_2|},$$ 
thus obtaining
$$\lVert\Delta^{\alpha_1,\alpha_2}b\rVert\lesssim\sum_{|\beta_1|+|\beta_2|>0}\langle\xi_1\rangle^{-\rho_1|\beta_1|}\langle\xi_2\rangle^{-\rho_2|\beta_2|}\sup_{|\gamma_1|=k_1,|\gamma_2|=k_2}\lVert\Delta^{\gamma_1,\gamma_2}b\rVert+ \langle\xi_1\rangle^{-m_1'-\rho_1|\alpha_1|}\langle\xi_2\rangle^{-m_2'-\rho_2|\alpha_2|}.$$ Now, we should take the supremum in $\alpha=(\alpha_1,\alpha_2)$ with fixed height $(k_1,k_2)$ to get \begin{align*}&\sup_{|\alpha_1|=k_1,|\alpha_2|=k_2}\lVert\Delta^{\alpha_1,\alpha_2}b\rVert_{\mathcal L(\mathcal H_{\xi})}\\ &\lesssim\sum_{|\beta_1|+|\beta_2|>0}\langle\xi_1\rangle^{-\rho_1|\beta_1|}\langle\xi_2\rangle^{-\rho_2|\beta_2|}\sup_{|\gamma_1|=k_1,|\gamma_2|=k_2}\lVert\Delta^{\gamma_1,\gamma_2}b\rVert_{\mathcal L(\mathcal H_{\xi})}+\langle\xi_1\rangle^{-m_1'-\rho_1k_1}\langle\xi_2\rangle^{-m_2'-\rho_2k_2}.\end{align*} 
When $\langle\xi_1\rangle^{-\rho_1}+\langle\xi_2\rangle^{-\rho_2}$ is small enough, we obtain the desired estimate  $$\sup_{|\alpha_1|=k_1,|\alpha_2|=k_2}\lVert\Delta^{\alpha_1,\alpha_2}b\rVert_{\mathcal L(\mathcal H_{\xi})}\lesssim\langle\xi_1\rangle^{-m_1'-\rho_1k_1}\langle\xi_2\rangle^{-m_2'-\rho_2k_2}.$$ Hence, we get the result when $\langle\xi_1\rangle^{-\rho_1}+\langle\xi_2\rangle^{-\rho_2}<c$ for some positive constant $c>0$. We multiply $a^{-1}$ by an excision function $\chi=\chi(\xi_1,\xi_2)\in S^{0,0}_{\rho,\delta}(G\times\widehat G)$, which is equal to zero on $(\widehat U_1\times\widehat G_2)\cup(\widehat G_1\times\widehat U_2)$ and equal to $1$ outside  $(\widehat V_1\times\widehat G_2)\cup(\widehat G_1\times\widehat V_2)$ (see the beginning of the section). By applying the finite Leibniz formula we obtain (notice that $\chi a^{-1}=\chi b$)  $$\lVert\partial^{\beta_1,\beta_2}\Delta^{\alpha_1,\alpha_2}(\chi a^{-1})\rVert_{\mathcal L(\mathcal H_{\xi})}\lesssim\sum_{\begin{matrix}_{|\epsilon_1|,|\gamma_1|\leq|\alpha_1|\leq|\epsilon_1|+|\gamma_1|}\\_{|\epsilon_2|,|\gamma_2|\leq|\alpha_2|\leq|\epsilon_2|+|\gamma_2|}\end{matrix}}\lVert\Delta^{\epsilon_1,\epsilon_2}\chi\rVert_{\mathcal L(\mathcal H_{\xi})}\lVert\partial^{\beta_1,\beta_2}\Delta^{\gamma_1,\gamma_2}b\rVert_{\mathcal L(\mathcal H_{\xi})}$$ 

$$\lesssim \langle\xi_1\rangle^{-m_1'-\rho_1|\alpha_1|+\delta_1|\beta_1|}\langle\xi_2\rangle^{-m_2'-\rho_2|\alpha_2|+\delta_2|\beta_2|}.$$ This means that the symbol $\chi(\xi_1,\xi_2) a^{-1}(x_1,x_2,\xi_1,\xi_2)$ belongs to the class $S^{-m_1',-m_2'}_{\rho,\delta}(G\times\widehat G)$ and the proof of Proposition \ref{a-1} is complete.
\end{proof}

We are now ready to define bielliptic and bihypoelliptic operators and to show the existence of parametrices in such cases.

\begin{definition}\label{defbiell} Let $a\in S^{m_1,m_2}_{\rho,\delta}(G\times\widehat{G}), 1\geq \rho_i>\delta_i\geq 0, i=1,2$ and $A=\mathrm{Op}(a)\in L^{m_1,m_2}_{\rho,\delta}(G)$. We say that A is bielliptic if 
\begin{enumerate} 
\item  the symbol $a(x_1,x_2,\xi_1,\xi_2)$ is invertible for all $(x,\xi)\in G\times\widehat{G}$, and satisfies  $$\lVert a(x_1,x_2,\xi_1,\xi_2)^{-1}\rVert_{\mathcal{L}(\mathcal{H}_{\xi})}\lesssim\langle\xi_1\rangle^{-m_1}\langle\xi_2\rangle^{-m_2};$$  
\item the operator $a(x_1,x_2,D_1,\xi_2)$ is exactly invertible for all $(x_2,\xi_2)\in G_2\times\widehat{G}_2$ as an operator in $L^{m_1}_{\rho_1,\delta_1}(G_1)$ with inverse in $L^{-m_1}_{\rho_1,\delta_1} (G_1)$, in particular $$(a\circ_1 a^{-1})(x_1,x_2,D_1,\xi_2)=I_{\mathcal D'(G_1)},$$
\item the operator $a(x_1,x_2,\xi_1,D_2)$ is exactly invertible for all $(x_1,\xi_1)\in G_1\times\widehat{G}_1$ as an operator in $L^{m_2}_{\rho_2,\delta_2}(G_2)$, with inverse in $L^{-m_2}_{\rho_2,\delta_2}(G_2)$, in particular $$(a\circ_2 a^{-1})(x_1,x_2,\xi_1,D_2)=I_{\mathcal D'(G_2)}.$$
\end{enumerate} \end{definition}

\begin{theorem}\label{TheoParBiell} Let $A\in L^{m_1,m_2}_{\rho,\delta}(G),1\geq\rho_i>\delta_i\geq 0,i=1,2$, be bielliptic. Then, there exists $B\in L^{-m_1,-m_2}_{\rho,\delta}(G)$ such that 
$$AB=I+K,\ \ \ BA=I+H,$$ 
where $I=I_{\mathcal D'(G)}$ is the identity map on $G$ and $K,H\in L^{-\infty,-\infty}(G)$ are smoothing bisingular operators.
\end{theorem}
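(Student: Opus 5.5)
We build a right parametrix through the usual Neumann-type iteration. Condition (1) of Definition \ref{defbiell} gives the principal inverse, and the composition formula (Theorem \ref{4.1}) and the asymptotic expansion theorem control the errors. The difference from the Euclidean case is the mixed terms $d''$ and $d'''$ in the composition formula. They involve the partial compositions $\circ_{\xi_2}$ and $\circ_{\xi_1}$, and they gain decay in only one of the two frequency directions. Conditions (2) and (3), exact invertibility of $a(x_1,x_2,D_1,\xi_2)$ and $a(x_1,x_2,\xi_1,D_2)$, are what let us kill these terms.

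\emph{Step 1: the first approximate inverse.} Since $a(x,\xi)$ is invertible everywhere with $\|a^{-1}\|\lesssim\langle\xi_1\rangle^{-m_1}\langle\xi_2\rangle^{-m_2}$, Proposition \ref{a-1} with $m_i'=m_i$ applies, so (\ref{horm}) is not needed. With $\widehat U_j=\{1_{G_j}\}$ it gives $b_0:=\chi a^{-1}\in S^{-m_1,-m_2}_{\rho,\delta}$. Here $1-\chi$ is supported on $(\widehat V_1\times\widehat G_2)\cup(\widehat G_1\times\widehat V_2)$. This set is \emph{not} smoothing; only $1-(\chi_1+\chi_2-\chi)$ is. So we correct $b_0$ on the strips. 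Let $a^{-1,1}(x,\xi)$ be the symbol of the exact inverse of $a(x_1,x_2,D_1,\xi_2)$ given by (2), and $a^{-1,2}$ the symbol of the inverse given by (3). We set
\[
b_1:=\chi_2(\xi_2)\,a^{-1,1}+\chi_1(\xi_1)\,a^{-1,2}-\chi\, a^{-1}.
\]
One then has to verify that $b_1\in S^{-m_1,-m_2}_{\rho,\delta}$. The point is that on a strip $\xi_1\in\widehat V_1$ (a finite set) the $\xi_1$-behaviour is irrelevant, and the $\xi_2$-estimates follow from (3) with uniformity in $(x_1,\xi_1)$. We also have to check, via the composition formula, that $a\#b_1-I$ is of order $(-\eta_1,-\eta_2)$ modulo terms that vanish to all orders on the strips. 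The identities $a\circ_1 a^{-1,1}=I$ and $a\circ_2a^{-1,2}=I$ are exactly what make the mixed contributions $d''$ and $d'''$ cancel at the level of $\circ_1$ and $\circ_2$.

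\emph{Step 2: iteration.} Write $a\#b_1=I-r$ with $r\in S^{-\eta_1,-\eta_2}_{\rho,\delta}$. By Theorem \ref{4.1}, $r^{\#k}\in S^{-k\eta_1,-k\eta_2}_{\rho,\delta}$; this uses $\eta_i=\rho_i-\delta_i>0$, which is why the strict inequality $\rho_i>\delta_i$ is required. By the asymptotic expansion theorem there is $c\sim\sum_{k\ge0}r^{\#k}$. Put $B=\mathrm{Op}(b_1\# c)\in L^{-m_1,-m_2}_{\rho,\delta}$. Then $AB=\mathrm{Op}((I-r)\#c)=I+K$ with $K$ smoothing, by telescoping. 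A left parametrix $B'$ is obtained in the same way, either from $b_1\#a$ or by applying the construction to $A^*$ via the adjoint formula; note that $A^*$ is again bielliptic by the adjoint theorem. The standard argument $B'=B'(AB-K)=(I+H')B-B'K$ shows that $B'-B$ is smoothing, and hence $BA=I+H$.

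\emph{Main obstacle.} The hard step is Step 1: proving that the glued symbol $b_1$ lies in the bisingular class and that $a\#b_1-I$ gains $(\eta_1,\eta_2)$ \emph{in both directions simultaneously}. The remainders in Theorem \ref{4.1} are only partially decaying, of orders $(\cdot,-(j+1)\eta_2)$ and $(-(j+1)\eta_1,\cdot)$. I would first try to show that the $d''$ and $d'''$ parts of $a\#(\chi a^{-1})$ coincide, up to $S^{-\infty,-\infty}$, with the expansions of $a\circ_2(\chi_1 a^{-1,2})$ and $a\circ_1(\chi_2a^{-1,1})$ minus their "diagonal" parts. Exact invertibility then makes them cancel against the correction terms in $b_1$.
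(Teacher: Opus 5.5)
Your overall scheme matches the paper's, which obtains this statement as the special case of Theorem \ref{parametrixbihypo}: a first approximate inverse, the Neumann series $r^{\#k}$ via Theorem \ref{4.1}, asymptotic summation, and a left parametrix differing from the right one by a smoothing operator. The two proofs differ only in the first approximant.

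\emph{How the paper handles the strips, and why your objection holds.} The paper takes $b_0=\chi a^{-1}$ and evaluates $a\circ_1b_0$ and $a\circ_2b_0$ directly from the hypotheses, using $a\#b=a\circ_1b+a\circ_2b-ab+\sum_{j\ge1}c_{-j\eta_1,-j\eta_2}$. Your objection to this is correct. For a bielliptic $a$, with $a^{-1}$ in (2) the pointwise inverse as in the paper, one has $a\circ_1(\chi_1a^{-1})=(a\circ_1a^{-1})\chi_1=\chi_1$. Hence $a\circ_1b_0=a\circ_2b_0=ab_0=\chi$, and $1-a\#b_0$ contains $1-\chi$, which equals $1$ on the strips. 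Equivalently, the error in (2) of Definition \ref{bihypoell} would be $E_1=1-\chi_1$, which is neither finitely supported nor smoothing. So a correction on the strips, such as your $b_1$, is genuinely needed.

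\emph{The gap.} Step 1, the only non-routine part, is announced but not carried out, and your sketch of it is inaccurate. Two things are needed.

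(i) You need $a^{-1,1},a^{-1,2}\in S^{-m_1,-m_2}_{\rho,\delta}(G\times\widehat G)$ as bisingular symbols. For $a^{-1,1}$ this includes bounds $\langle\xi_2\rangle^{-m_2-\rho_2|\alpha_2|+\delta_2|\beta_2|}$ on $\partial_{x_2}^{\beta_2}\Delta_2^{\alpha_2}$, uniformly in $x$. Invertibility of $a(x_1,x_2,D_1,\xi_2)$ for each fixed $(x_2,\xi_2)$, with inverse in $L^{-m_1}_{\rho_1,\delta_1}(G_1)$, says nothing about this. It must either be built into (2)--(3) as a uniformity hypothesis or be proved. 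Note also that on $\widehat U_1\times\widehat G_2$ your $b_1$ equals $\chi_2a^{-1,1}$, so the $\xi_2$-estimates there must come from (2), not (3).

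(ii) The cancellation holds modulo $S^{-\eta_1,-\eta_2}_{\rho,\delta}$, not modulo $S^{-\infty,-\infty}$. Granting (i), use that $\chi_2$ is a constant for $\circ_1$ and $\chi_1$ for $\circ_2$, together with $a\circ_1a^{-1,1}=1$ and $a\circ_2a^{-1,2}=1$. This gives
\[
a\#b_1=(\chi_1+\chi_2-\chi)+(a\circ_2g_2-ag_2)+(a\circ_1g_1-ag_1)+\sum_{j\ge1}c_{-j\eta_1,-j\eta_2},
\]
with $g_2=\chi_2a^{-1,1}-\chi a^{-1}$ and $g_1=\chi_1a^{-1,2}-\chi a^{-1}$. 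Since $\chi_1+\chi_2-\chi=1-(1-\chi_1)(1-\chi_2)$, you need $g_2\in S^{-m_1-\eta_1,-m_2}_{\rho,\delta}$ and $g_1\in S^{-m_1,-m_2-\eta_2}_{\rho,\delta}$.

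Write $g_2=\chi_2(1-\chi_1)a^{-1,1}+\chi(a^{-1,1}-a^{-1})$; the first term is finitely supported in $\xi_1$. For the second, compose $a\circ_1(\chi a^{-1})=\chi+s$ on the left with the two-sided inverse $a^{-1,1}$:
\[
\chi(a^{-1,1}-a^{-1})=-a^{-1,1}\circ_1 s,\qquad s=a\circ_1(\chi a^{-1})-\chi\in S^{-\eta_1,0}_{\rho,\delta}.
\]
Here $s$ is the term $d'''_0$ for the pair $(a,\chi a^{-1})$, controlled by Proposition \ref{a-1} and Theorem \ref{4.1}. The case of $g_1$ is symmetric. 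This comparison lemma is what makes the strip terms cancel, not an identification of $d''$ and $d'''$ up to $S^{-\infty,-\infty}$.

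\emph{Two smaller points.}

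First, suppose $a^{-1}$ in (2)--(3) of Definition \ref{defbiell} is the pointwise inverse, as the paper intends. Then $a^{-1,1}=a^{-1,2}=a^{-1}$ and $b_1=(\chi_1+\chi_2-\chi)a^{-1}$. In that case $a\circ_1b_1=a\circ_2b_1=ab_1=\chi_1+\chi_2-\chi$, and Step 1 reduces to the paper's one-line computation with $\chi_1+\chi_2-\chi$ in place of $\chi$. The only extra input is symbol estimates for $a^{-1}$ on the strips. Proposition \ref{a-1} does not give these: it is stated for $\chi a^{-1}$, and its absorption argument needs both $\langle\xi_1\rangle$ and $\langle\xi_2\rangle$ large. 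Your reading, with genuine partial inverses, gives a more general statement at the price of the uniformity in (i).

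Second, build the left parametrix from $b_1\#a$, using that the $a^{-1,j}$ are two-sided inverses, rather than through $A^*$. Biellipticity of $A^*$ means exact invertibility of $\sigma^{(*)}$ and of its partial operators. That is not an asymptotic property, so it does not follow from the adjoint formula.
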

The proof of Theorem \ref{TheoParBiell} is omitted, since the theorem is a particular case of Theorem \ref{parametrixbihypo} below.

\begin{definition}\label{bihypoell} Let $a\in S^{m_1,m_2}_{\rho,\delta}(G\times\widehat{G}), 1\geq \rho_i>\delta_i\geq 0, i=1,2$, and $A=\mathrm{Op}(a)\in L^{m_1,m_2}_{\rho,\delta}(G)$. We say that A is bihypoelliptic of order $(m'_1,m'_2)\in\mathbb{R}^2$, with $m'_j\leq m_j$, $j=1,2,$, if the following conditions on its symbol are verified:
\begin{enumerate} 
\item except for a finite number of representations $\xi=\xi_1\otimes\xi_2\in\widehat G$, uniformly in $(x_1,x_2)\in G_1\times G_2$, the symbol $a(x_1,x_2,\xi_1,\xi_2)$ is invertible and the inverse $a(x_1,x_2,\xi_1,\xi_2)^{-1}$ satisfies $$\lVert a(x_1,x_2,\xi_1,\xi_2)^{-1}\rVert_{\mathcal{L}(\mathcal{H}_{\xi})}\lesssim\langle\xi_1\rangle^{-m'_1}\langle\xi_2\rangle^{-m'_2};$$ 
\item except for finitely many representations $\xi_2\in\widehat G_2$, uniformly in $x_2\in G_2$, the operator $a(x_1,x_2,D_1,\xi_2)$ is exactly invertible as an operator in 
$L^{m_1}_{\rho_1,\delta_1}(G_1)$ with inverse in $L^{-m'_1}_{\rho_1,\delta_1} (G_1)$ and, in particular
$$(a\circ_1a^{-1}\chi_1)(x_1,x_2,D_1,\xi_2)=I_{\mathcal D'(G_1)}-E_1(x_1,x_2,D_1,\xi_2),$$ 
where $E_1\in S^{-\infty,-\infty}$ has a finite support in $(\xi_1,\xi_2)$, uniformly in $(x_1,x_2)$; 
\item except for finitely many representations $\xi_1\in\widehat G_1$, uniformly in $x_1\in G_1$, the operator $a(x_1,x_2,\xi_1,D_2)$ is exactly invertible as an operator in  $L^{m_2}_{\rho_2,\delta_2}(G_2)$ with inverse in $L^{-m'_2}_{\rho_2,\delta_2}(G_2)$ and, in particular $$(a\circ_2a^{-1}\chi_2)(x_1,x_2,\xi_1,D_2)=I_{\mathcal D'(G_2)}-E_2(x_1,x_2,\xi_1,D_2),$$ where $E_2\in S^{-\infty,-\infty}$ has a finite support in $(\xi_1,\xi_2)$, uniformly in $(x_1,x_2)$;
\item for all $\xi=\xi_1\otimes\xi_2\in \widehat G$ where the symbol $a(x_1,x_2,\xi_1,\xi_2)$ is invertible, and for all multi-indices $(\alpha_1,\alpha_2)\in\mathbb N_0^{n_P}\times\mathbb N_0^{n_R},(\beta_1,\beta_2)\in\mathbb N_0^{n_1}\times\mathbb N_0^{n_2}$, the following inequality holds $$\lVert a(x_1,x_2,\xi_1,\xi_2)^{-1}\Delta^{\alpha_1,\alpha_2}\partial^{\beta_1,\beta_2}a(x_1,x_2,\xi_1,\xi_2)\rVert_{\mathcal{L}(\mathcal H_{\xi})} \lesssim \langle\xi_1\rangle^{\delta_1|\beta_1|-\rho_1|\alpha_1|}\langle\xi_2\rangle^{\delta_2|\beta_2|-\rho_2|\alpha_2|}.$$  \end{enumerate}  \end{definition} 

\begin{remark}
    We wish to emphasize that, even though the class of bisingular operators is much wider than that of tensor products of operators, it is specifically targeted to study the latter cases. We will see this more in detail through the next Example \ref{exBiell-Bihyp} of bihypoelliptic bisingular operators of different types, that is, of the form $A=A_1+A_2$ and $A=A_1\otimes A_2$. We will also see that in the two different situations, the noninvertibility over a finite set of representations is more often possible in the first case and hardly in the second case.
\end{remark}

\begin{example}\label{exBiell-Bihyp}
{\rm 
    There exist bihypoellptic operators of the form $A=A_1+A_2\in L^{m_1,m_2}(G_1\times G_2),$ where  $A_1\in L^{m_1,0}, A_2\in L^{0,m_2}$, whose symbol is non-invertible over a finite set. For instance, consider the Laplace operator $L=\partial_x^2+\partial_y^2$ on $\mathbb T^1_x\times\mathbb T^1_y$ and the heat operator $H=\partial_t-\partial_x^2$ on $\mathbb S^1_t\times\mathbb T^1_x$. In both of these examples, the global symbols $$\sigma_{L}(k,h)=\sigma_{L_1}(k)+\sigma_{L_2}(h)=k^2+h^2 \ \ \ \text{ and }\ \ \ \sigma_H(\tau,k)=\sigma_{H_1}(\tau)+\sigma_{H_2}(\kappa)=i\tau+\kappa^2$$ vanish over a finite set (actually at one point in both cases) and out of that set satisfy bihypoelliptic estimates of order $(-1,-1)$ and $(-\frac{1}{2},-1)$, respectively. Note also that $L$ is elliptic as a standard pseudodifferential operator, while is only bihypoelliptic as a bisingular operator. This puts into light the fact that operators of the form $A_1+A_2$ are more suitable to be studied via the standard pseudodiffrential calculus than via the bisingular one.\ 
    
    On the other hand, bihypoelliptic operators of product type $A_1\otimes A_2\in L^{m_1,m_2}_{\rho,\delta}(G_1\times G_2)$, where $A_1\in L^{m_1}_{\rho_1,\delta_1}(G_1), A_2\in L^{m_2}_{\rho_2,\delta_2}(G_2)$, must have a global symbol which is invertible everywhere. Let us call $\widehat H_1\subset \widehat G_1$ and $\widehat H_2\subset \widehat G_2$ the (possibly empty) sets of noninvertibility of $\sigma_{A_1}$ and $\sigma_{A_2}$, respectively.  In this case, the set of noninvertibility of the global symbol $$\sigma_{A_1\otimes A_2}=\sigma_{A_1}\otimes \sigma_{A_2}$$ contains $(\widehat G_1\times \widehat H_2)\cup (\widehat H_1\times \widehat G_2)$, which is finite if and only if it is empty. The condition on the finiteness of the set where the global symbol is non invertible is necessary to have the existence of a parametrix, so for tensors $A_1\otimes A_2$ the invertibility is needed. 
    
    Let us finally comment that in the standard pseudodiffrential calculus on compact Lie groups, $A_1\otimes A_2$ would belong to $S^{m_1+m_2}(G_1\times G_2)$. Here the singularity and regularity properties should be studied in the whole $G_1\times G_2$, causing a loss of information due to the structure of the operator. The use of bisingular classes for tensor products of operators on single groups, instead, allows to capture the behavior on each group separately, and get a refined characterization of singularity and regularity.}
\end{example}

The meaning of Example \ref{exBiell-Bihyp}  is that, depending on the form of the operator, one has an optimal pseudodiffrential setting to work with, that is, the classical or the bisingular one.

\vspace{.3cm}

We next show that bihypoelliptic operators possess a parametrix.

\begin{theorem}\label{parametrixbihypo}
Let $A\in L^{m_1,m_2}_{\rho,\delta}(G),1\geq\rho_i>\delta_i\geq 0,i=1,2$, be bihypoelliptic of order $(m_1',m_2')$, with $m_i'\leq m_i,$ for $i=1,2$. 
Then, there exists $B\in L^{-m'_1,-m'_2}_{\rho,\delta}(G)$ such that 
$$AB=I_{\mathcal D'(G)}+K,\ \ \ BA=I_{\mathcal D'(G)}+H,$$ 
where $K,H\in L^{-\infty,-\infty}(G)$ are smoothing bisingular operators. Consequently, we have 
$$\mathrm{singsupp}(Au)=\mathrm{singsupp}(u),\ \ \ \text{ for all } u\in\mathcal D'(G).$$
\end{theorem}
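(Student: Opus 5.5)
The plan is the standard parametrix construction, carried out in the bisingular classes. I start from $b_0:=\chi a^{-1}$. Hypotheses (1) and (4) of Definition \ref{bihypoell} are exactly the assumptions of Proposition \ref{a-1}, so that proposition gives $b_0\in S^{-m'_1,-m'_2}_{\rho,\delta}(G\times\widehat G)$. Next I apply the composition formula, Theorem \ref{4.1}, to $a\# b_0$. I will not expand naively; instead I group the terms as $d'+d''+d'''$. The $j=0$ term is
\[
c_0=a b_0+\bigl(a\circ_{\xi_2} b_0-ab_0\bigr)+\bigl(a\circ_{\xi_1} b_0-ab_0\bigr).
\]
I then use hypotheses (2) and (3). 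Here $a\circ_1(a^{-1}\chi_1)=I-E_1$ and $a\circ_2(a^{-1}\chi_2)=I-E_2$, and $1-(\chi_1+\chi_2-\chi)$ is smoothing. From these I aim to show that $a\#b_0=I-r$ with
\[
r\in S^{-\eta_1,0}_{\rho,\delta}+S^{0,-\eta_2}_{\rho,\delta}+S^{-\infty,-\infty}.
\]
The reason is as follows. Each of $d''_{\cdot,-(j+1)\eta_2}$ and $d'''_{-(j+1)\eta_1,\cdot}$ already carries a gain in one variable. The partial compositions $\circ_1,\circ_2$ absorb the full expansion in that variable, up to the smoothing errors $E_j$. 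Replacing $\chi$ by $\chi_j$ costs only smoothing terms.

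I then shift the orders relative to $m'_i$. The error $r$ must be measured so that $b_0\# r^k$ lies in $S^{-m'_1-k\eta'_1,\,-m'_2-k\eta'_2}$ for some $\eta'_i>0$. Since $r$ only gains in one factor at a time, I would instead iterate separately. I define
\[
b\sim b_0\#\sum_{k\ge0} r^{\#k}.
\]
This is summed via the asymptotic expansion theorem. To make the orders decrease to $-\infty$ in both entries, I split $r=r_1+r_2$ with $r_1\in S^{-\eta_1,0}$ and $r_2\in S^{0,-\eta_2}$. I then note that products containing $p$ factors of $r_1$ and $q$ factors of $r_2$ lie in $S^{-p\eta_1,-q\eta_2}$.

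\textbf{The main obstacle} is that the mixed words where $q=0$ do not decay in the second order. To overcome this I would iterate in two stages. First I build a right parametrix for $a(x_1,x_2,D_1,\xi_2)$ in the first variable only. This uses the uniform invertibility in hypothesis (2) together with the $\circ_1$ calculus. Then I do the same in the second variable using hypothesis (3). Equivalently, I use the refined fact that the $d'''$-type remainders after applying hypothesis (2) already lie in $S^{-\eta_1,-\eta_2}$ modulo smoothing. This is the step I expect to require the most care: checking, through the explicit formulas for $d'',d'''$, that the exact inverses $a^{-1}\chi_j$ cancel all one-sided terms.

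Once a right parametrix $B_R$ with $AB_R-I\in L^{-\infty,-\infty}$ is obtained, I construct a left parametrix $B_L$ symmetrically. The construction uses $b_0\#a$ and the adjoint formula, which transfers hypotheses (2) and (3) to $A^*$. The standard argument $B_L\equiv B_LAB_R\equiv B_R$ modulo smoothing then gives $B$ satisfying both identities.

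Finally, for the singular support I use Proposition \ref{prop4.5}: smoothing bisingular operators have $C^\infty(G\times G)$ kernels and map $\mathcal D'(G)$ to $C^\infty(G)$. Pseudolocality of $A$ and $B$ follows from the same proposition, since their kernels are smooth off $S$. For the singular-support statement I would sharpen this using Theorem \ref{3.8} together with $u=BAu-Hu$. This yields $\mathrm{singsupp}(u)\subset\mathrm{singsupp}(Au)\subset\mathrm{singsupp}(u)$.
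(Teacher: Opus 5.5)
You have the right starting point ($b_0=\chi a^{-1}$ via Proposition \ref{a-1}, then the grouped expansion). However, the central step is misjudged, the left-parametrix detour through $A^*$ is unjustified, and the singular-support argument rests on a false pseudolocality claim.

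\textbf{The central step.} You assert that after (2)--(3) the error $r=1-a\#b_0$ gains in only one variable at a time, and both the ``main obstacle'' and the two-stage iteration rest on that. The assessment is wrong, and the obstacle is an artifact. In Theorem \ref{4.1} the only terms that do not gain in both variables are the $j=0$ parts of $d''$ and $d'''$, namely $a\circ_2 b_0-ab_0$ and $a\circ_1 b_0-ab_0$. Every $d'_j,d''_j,d'''_j$ with $j\ge1$ gains at least $(\eta_1,\eta_2)$ over $ab_0$. The $j=0$ parts cancel exactly:
\begin{itemize}
\item $\chi_2$ depends only on $\xi_2$, so it factors out of $\circ_1$. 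There is no need to ``replace $\chi$ by $\chi_j$'', and that replacement is not a smoothing change anyway: $\chi_1(1-\chi_2)$ does not decay in $\xi_1$.
\item Hence (2) and (3) give $a\circ_1 b_0=\chi_2(1-E_1)$ and $a\circ_2 b_0=\chi_1(1-E_2)$, while $ab_0=\chi$.
\end{itemize}
Therefore
\[
c_0=\chi_1+\chi_2-\chi-\chi_2E_1-\chi_1E_2\equiv 1\pmod{S^{-\infty,-\infty}},
\]
so $r\equiv-\sum_{j\ge1}c_j\in S^{-\eta_1,-\eta_2}_{\rho,\delta}$. The plain Neumann series $b\sim\sum_j b_0\# r^{\#j}$, summed by the asymptotic expansion theorem, then gives the right parametrix. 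This is exactly the paper's proof. Your ``equivalently'' remark points at it, but it is a two-line observation, not a delicate check. The two-stage scheme you put first is never specified: how would an $x_1$-parametrix and an $x_2$-parametrix be glued into one bisingular symbol? It is also unnecessary, since (2)--(3) already provide exact partial inverses.

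\textbf{Two further cautions.} First, if $m_i'<m_i$, Theorem \ref{4.1} alone only places $c_j$ in $S^{m_1-m_1'-j\eta_1,\,m_2-m_2'-j\eta_2}_{\rho,\delta}$. The decay of $r$ then does not follow from order counting: your $\eta_i'$ would be $\eta_i-(m_i-m_i')$, which can be negative. Hypothesis (4) has to be exploited inside the individual terms, as in H\"ormander's hypoelliptic calculus; the paper's proof passes over this point too. Second, the left parametrix via $A^*$ is unjustified. The symbol $a^{(*)}$ differs from $a^*$ by one-sided terms of the type $d''_0,d'''_0$ that are not small, so (1)--(3) do not pass to $A^*$ for free. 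The direct route via $s_1=1-b_0\#a$, as in the paper, needs only the left analogues $(a^{-1}\chi_j)\circ_j a=1-\widetilde E_j$ with $\widetilde E_j$ smoothing. These follow from the exact invertibility assumed in (2)--(3).

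\textbf{The singular-support step.} Proposition \ref{prop4.5} gives smoothness of $\kappa_x(y)$ only for $y\notin S$. So the Schwartz kernel $\kappa_x(y^{-1}x)$ is smooth only off $\{x_1=y_1\}\cup\{x_2=y_2\}$, not off the diagonal, and bisingular operators are not pseudolocal. Theorem \ref{3.8} only bounds $|\kappa_x|$ near $S$; it cannot restore smoothness there.

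In fact the equality of singular supports is false in this setting. On $\mathbb T^2$, $A=(1-\partial_{x_1}^2)\otimes I$ is bielliptic of order $(2,0)$, a case the paper subsumes under bihypoellipticity. Take $u=g\otimes\delta_0$ with $g(x_1)=\sum_{k\in\mathbb Z}(1+k^2)^{-1}e^{ikx_1}>0$. Then $Au=\delta_{(0,0)}$, so $\mathrm{singsupp}\,Au=\{(0,0)\}$, while $\mathrm{singsupp}\,u=\mathbb T^1\times\{0\}$.

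What the parametrix does give is global hypoellipticity: $Au\in C^\infty\Rightarrow u\in C^\infty$, from $u=BAu-Hu$. Together with the kernel structure of $B$, it also gives the weaker localization $\mathrm{singsupp}\,u\subset\pi_1^{-1}\pi_1(\mathrm{singsupp}\,Au)\cup\pi_2^{-1}\pi_2(\mathrm{singsupp}\,Au)$, where $\pi_j:G\to G_j$ are the projections. The paper states the equality without argument, so you should restrict your conclusion to these weaker statements.
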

\begin{proof}
Observe that by (4) in Definition \ref{bihypoell} and Proposition \ref{a-1}, we have that $a^{-1}\chi\in S_{\rho,\delta}^{-m'_1,-m'_2}(G\times\widehat G)$. By our hypotheses (2) and (3) in Definition \ref{bihypoell}, 
we also have $$a\circ_1a^{-1}\chi_1=1-E_1\ \ \ \text{ and }\ \ \ \ a\circ_2a^{-1}\chi_2=1-E_2.$$ 
Taking $b_0(x_1,x_2,\xi_1,\xi_2)=a(x_1,x_2,\xi_1,\xi_2)^{-1}\chi(\xi_1,\xi_2)$ and using the asymptotic composition formula, 
we have 
$$r_1=1-a\#b_0=1-ab_0-(a\circ_1b_0-ab_0)-(a\circ_2b_0-ab_0)-\sum_{j\geq 1}c_{-j(\rho_1-\delta_1),-j(\rho_2-\delta_2)}=$$
$$=\underbrace{1-(\underbrace{(a\circ_1 a^{-1})\chi}_{=\chi_2(1-E_1)}+\underbrace{(a\circ_2a^{-1})\chi}_{=\chi_1(1-E_2)}-\chi)}_{=1-(\chi_1+\chi_2-\chi)+\chi_2E_1+\chi_1E_2\in S^{-\infty,-\infty}}-\sum_{j\geq 1}c_{-j(\rho_1-\delta_1),-j(\rho_2-\delta_2)}\in S^{-(\rho_1-\delta_1),-(\rho_2-\delta_2)}_{\rho,\delta}(G\times\widehat G).$$ 
Then we define $b_j:=b_0\# r_j$, with $r_j=r_1\# r_{j-1}\in S^{-j(\rho_1-\delta_1),-j(\rho_2-\delta_2)}_{\rho,\delta}(G\times \widehat G)$ for $j\geq 2$, so that $a\# b_j=(1-r_1)\#r_j$. 
Thus, for all $k\in\mathbb N$, we have 
$$a\#\sum_{j<k}b_j=(1-r_1)\#\biggl(1+\sum_{0<j<k}r_j\biggr)=1+\sum_{0<j<k}r_j-r_1-r_1\#\sum_{0<j<k}r_j=1-r_k,$$ 
where $r_k\in S^{-k(\rho_1-\delta_1),-k(\rho_2-\delta_2)}_{\rho,\delta}(G\times\widehat G)$. Setting $b\sim\sum_{j\geq 0}b_j$, we get $a\# b-1\in S^{-\infty,-\infty}(G\times\widehat G)$. 
To prove the existence of a left parametrix $B$, we take $b_0=\chi a^{-1}$ and $s_1=1-b_0\# a\in S^{-(\rho_1-\delta_1),-(\rho_2-\delta_2)}_{\rho,\delta}(G\times\widehat G)$. 
Then, we set $s_j:=s_{j-1}\# s_1$ for $j\geq 2$, and $b_k=s_k\# b_0$ for $k\geq 1$. The result then follows for $b\sim\sum_{k\geq 0}b_k$, which concludes the proof.
\end{proof}

\section{Bisingular hypoelliptic and elliptic examples, and further examples}\label{sec7}

We devote this final section to some other examples of bielliptic and bihypoelliptic operators on $G=G_1\times G_2$.
\medskip

\noindent {\bf Example 1.} The following is an example of  bihypoelliptic evolution  operator with constant coefficients on a (triple) product group $\mathbb S^1_t\times \mathbb T^n_x\times \mathrm{SU(2)}_y$. We consider
$$P=\partial_t-(1-(\partial_{x_1}^2+\dots+\partial_{x_n}^2))\otimes(1+\mathcal L_{\mathrm{SU(2)}}),$$ where the time is compactified,  as $t\in\mathbb S^1$, and $\mathcal L_{\mathrm{SU(2)}}$ is the positive subelliptic Laplacian on $\mathrm{SU(2)}$. 
To check the bihypoellipticity we check the conditions in Definition \ref{bihypoell}. 

Observe that the symbol $\sigma$ of $P$, that is 
$$\sigma(t,x,y,\tau,k,h)=i\tau+(1+\underbrace{k_1^2+\dots +k_n^2}_{:=|k|^2})(1+h(h+1))I_{2h+1},$$
is invertible for every $(\tau,k,h)\in\mathbb Z\times\mathbb Z^{n}\times\frac{1}{2}\mathbb N_0$ and belongs to the (three factors) class $S^{1,2,2}_{(1,1,1),(0,0,0)}(\mathbb S^1_t\times \mathbb T^n_x\times\mathrm{SU}(2)_y)$. The inequality $$\biggl|\frac{1}{i\tau+(1+|k|^2)(1+h+h^2)}\biggr|\lesssim\langle \tau\rangle^{-m_1'}\langle k\rangle^{-m_2'}\langle h\rangle^{-m_3'}$$  is verified for $(m_1',m_2',m_3')=(\frac{1}{2},1,1)$, and for every multi-indices $\alpha_1,\alpha_2,\alpha_3$, 
the estimate $$\biggl|\frac{\Delta^{\alpha_1,\alpha_2,\alpha_3}(i\tau+(1+|\kappa|^2)(1+h+h^2))}{i\tau+(1+|\kappa|^2)(1+h+h^2)}\biggr|\lesssim_{\alpha_1,\alpha_2,\alpha_3}\langle\tau\rangle^{-|\alpha_1|\rho_1}\langle k\rangle^{-|\alpha_2|\rho_2}\langle h\rangle^{-|\alpha_3|\rho_3}$$ is true when $(\rho_1,\rho_2,\rho_3)=(\frac{1}{4},\frac{1}{2},\frac{1}{2})$. Then, since (1) and (4) in Definition \ref{bihypoell} hold and (2) and (3) in Definition \ref{bihypoell} are trivially satisfied, we conclude that $P$ is bihypoelliptic (with bihypoelliptic order $(\frac{1}{2},1,1)$) and admits a parametrix whose symbol belongs to the class $S^{-\frac{1}{2},-1,-1}_{(\frac{1}{4},\frac{1}{2},\frac{1}{2}),(0,0,0)}(\mathbb S^1\times\mathbb T^n\times\mathrm{SU}(2))$.
\vspace{.3cm}

\noindent{\bf Example 2.}
Let $P=P(\xi)$ be a hypoelliptic polynomial of degree $m$ (H\"ormander \cite{H1}). Let $\rho\in]0,1]$ be the constant appearing in the characterizing estimates
\begin{equation}
  |P^{(\alpha)}(\xi)|\leq C|P(\xi)||\xi|^{-\rho|\alpha|},\quad|\xi|\geq R\geq 1.
  \label{eqHypPol}\end{equation}
In particular,
\begin{equation}
  |P(\xi)|\geq C_0|\xi|^{\rho m},\quad|\xi|\geq R.
  \label{eqHypPol1}\end{equation}
Let $G=\mathbb{R}^n/2\pi\mathbb{Z}^n.$ By \cite{RT} we know that $P\in S^m_{1,0}(G)$ since it belongs to $S^m_{1,0}(\mathbb{R}^n)$,
and by \cite{H} we have that
$$p(\xi)=\frac{\chi(\xi)}{P(\xi)}\in S^{-\rho m}_{\rho,0}(\mathbb{R}^n),$$
where $\chi$ is an excision function that cuts away the zero-set of $P$ (which is compact). Next, we deform $P$ in such a way that the resulting operator, considered in $S^m_{1,0}(G)$, is
exactly invertible in the bisingular sense. Let
$$\mathsf{Z}_R=B_R(0)\cap\mathbb{Z}^n,$$
which is a finite set. Notice that $P^{-1}(0)\cap\mathbb{Z}^n\subset\mathsf{Z}_R.$
We may hence find $c_0$ such that $c_0\not\in P(\mathsf{Z}_R)$ and may also choose it so that $|c_0|\leq C_0/2.$ We therefore have that
for suitable $C',C''>0$ and $R'\geq R$
$$C''|P(k)|\geq|P(k)-c_0|\geq C'|P(k)|,\quad|k|\geq R',\,\,k\in\mathbb{Z}^n.$$
Hence $P(k)-c_0$ satisfies the following properties:
$$P(k)-c_0\not=0,\,\,\forall k\in\mathbb{Z}^n,\leqno(i)$$
$$|P(k)-c_0|\gtrsim|k|^{\rho m}\,\,\,\forall k\in\mathbb{Z}^n,\,\,|k|\gtrsim 1,\leqno(ii)$$
$$|(P-c_0)^{(\alpha)}(\xi)|\lesssim|P(\xi)-c_0||\xi|^{-\rho m},\quad|\xi|\gtrsim 1.\leqno(iii)$$

Let now $G_j=\mathbb{R}^{n_j}/2\pi\mathbb{Z}^{n_j}$, $j=1,2,$ and $G=G_1\times G_2$.
We may therefore consider $a_j=a_j(\xi_j)\in S^{m_j}_{1,0}(G_j)$ for which the properties $(i)$ to $(iii)$ above hold. Hence, the relative operators $A_j=\mathrm{Op}(a_j)$ are exactly invertible with
$A_j^{-1}\in L^{-m_j}_{\rho_j,0}(G_j),$ $j=1,2.$ Finally, consider
$$A_1\boxtimes A_2=\left[\begin{array}{cc}A_1\otimes I & -I\otimes A_2^*\\ I\otimes A_2 & A_1^*\otimes I\end{array}\right]\in S^{m_1,0}_{(1,1),(0,0)}(G)\boxtimes S^{0,m_2}_{(1,1),(0,0)}(G)
\subset S^{m_1,m_2}_{(\rho_1,\rho_2),(0,0)}(G;\mathsf{Mat}_2(\mathbb{C})),$$
since the $(1,0)$-classes are contained in the $(\rho,\delta)$-classes. The system is bihypoelliptic of order  $(m_1(2\rho_1-1),m_2(2\rho_2-1))$, for one has that for all $k_2\in\mathbb{Z}^{n_2}$
$$A_1\boxtimes A_2(D_1,k_2)=\left[\begin{array}{cc}A_1&-\overline{A_2(k_2)}\\ A_2(k_2)&A_1^*\end{array}\right]$$
whose inverse is
$$(A_1\boxtimes A_2(D_1,k_2))^{-1}=\left[\begin{array}{cc}(A_1^*A_1+|A_2(k_2)|^2)^{-1}&0\\0&(A_1A_1^*+|A_2(k_2)|^2)^{-1}\end{array}\right]
\left[\begin{array}{cc}A_1^*&\overline{A_2(k_2)}\\ -A_2(k_2)&A_1\end{array}\right],$$
which belongs to $L^{-m_1(2\rho_1-1)}_{\rho_1,0}(G_1;\mathsf{Mat}_2(\mathbb{C}))$.
Similarly, for $A_1\boxtimes A_2(k_1,D_2),$ $k_1\in\mathbb{Z}^{n_1},$ for which $(A_1\boxtimes A_2(k_1,D_2))^{-1}\in L^{-m_2(2\rho_2-1)}_{\rho_2,0}(G_2;\mathsf{Mat}_2(\mathbb{C}))$.
Therefore, $A_1\boxtimes A_2$ is bihypoelliptic of order $(m_1(2\rho_1-1),m_2(2\rho_2-1))$, with a parametrix $B\in L^{-m_1(2\rho_1-1),-m_2(2\rho_2-1)}_{(\rho_1,\rho_2),(0,0)}(G;\mathsf{Mat}_2(\mathbb{C})).$

\vspace{.3cm}

\noindent{\bf Example 3.}
The next one, is an elliptic example of a bisingular, which is a realization of $\bar\partial$, more precisely of $\bar\partial +\frac{1}{2}(1+i)$. For $(x_1,x_2)\in\mathbb{T}^2
=\mathbb{R}/2\pi\mathbb{Z}\times\mathbb{R}/2\pi\mathbb{Z},$ consider
$$P=\frac12\left[\begin{array}{cc}(\partial_{x_1}+1)\otimes I&I\otimes(\partial_{x_2}+1)\\ I\otimes(-\partial_{x_2}+1) &(\partial_{x_1}+1)\otimes I\end{array}\right]
\in L^{1,1}_{(1,1),(0,0)}(\mathbb{T}^2;\mathsf{Mat}_2(\mathbb{C})).$$
Since the symbol of $\partial_{x_j}$ is $\sigma_{\partial_{x_j}}(k_1,k_2)=ik_j$, for $(k_1,k_2)\in\mathbb{Z}^2$, then $P$ is a bielliptic operator and therefore possesses a parametrix $B\in L^{-1,-1}_{(1,1),(0,0)}(\mathbb{T}^2;\mathsf{Mat}_2(\mathbb{C})).$ 

To see that $P$ is a realization of $\bar\partial +\frac{1}{2}(1+i)$ it suffices to identify a function $f=\mathrm{Re}f+i\mathrm{Im}f$ with the vector $^t\![\mathrm{Re}f\quad  \mathrm{Im}f]$ and observe that
 
$$P \left[\begin{array}{cc}
    \mathrm{Re}f \\ \mathrm{Im}f \end{array}\right]=
\frac12
\begin{bmatrix}
\partial_{x_1}\mathrm{Re}f+\partial_{x_2}\mathrm{Im}f+\mathrm{Re}f+\mathrm{Im}f
\\[8pt]
\partial_{x_1}\mathrm{Im}f-\partial_{x_2}\mathrm{Re}f+\mathrm{Re}f+\mathrm{Im}f
\end{bmatrix}=\bar\partial f+\frac{1}{2}(1+i)f.
$$

\vspace{.3cm}

\noindent{\bf Example 4.} To end the section and the paper, we consider the following example
on $\mathbb T^2=\mathbb T^1_x\times\mathbb T^1_y$: $$\partial_x\boxtimes D_y=\left[\begin{array}{cc}
    \partial_x\otimes 1 & 1\otimes(-D_y)\\ 1\otimes D_y &(-\partial_x)\otimes 1
\end{array}\right]\in L^{1,1}_{(1,1),(0,0)}(\mathbb T^2,\mathsf{Mat}_2(\mathbb C)),$$ where $D_y=-i\partial_y$. If $f\in C^{\infty}(\mathbb{T}^2,\mathbb C)$, then we have $$(\partial_x\boxtimes D_y)f=\left[\begin{array}{cc}
    \partial_x & -D_y\\ D_y &-\partial_x
\end{array}\right]\left[\begin{array}{cc}
    \mathrm{Re}f \\ \mathrm{Im}f \end{array}\right]=\left[\begin{array}{cc}
    \partial_x\mathrm{Re}f+i\partial_y\mathrm{Im}f\\ -i\partial_y\mathrm{Re}f-\partial_x\mathrm{Im}f
\end{array}\right]=$$ $$=\partial_x\mathrm{Re}f+i\partial_y\mathrm{Im}f+i(-i\partial_y\mathrm{Re}f-\partial_x\mathrm{Im}f)=\partial_x\overline f+\partial_yf.$$ Similarly, we have
$$(D_x\boxtimes\partial_y)f=i(\partial_y\overline f-\partial_xf).$$ Therefore
$$(D_x\boxtimes\partial_y)\circ(\partial_x\boxtimes D_y)f=(D_x\boxtimes\partial_y)(\partial_x\overline f+\partial_y f)=i(\partial_y(\overline{\partial_x\overline f+\partial_yf})-\partial_x(\partial_x\overline f+\partial_yf))=i(\partial_y^2-\partial_x^2)\overline f,$$ so that $(D_x\boxtimes\partial_y)\circ(\partial_x\boxtimes D_y)$ can be viewed as a realization of the wave operator acting on $i\bar f$. \ We also have $$(\partial_x\boxtimes D_y)^2-(D_x\boxtimes\partial_y)^2=2(\partial_x^2+\partial_y^2).$$ 
Hence, we may also recover the classical Laplace operator on the torus $\mathbb T^2$ by using the bisingular vector tensor product $\boxtimes$.

\vspace{.3cm}

\noindent\textbf{Data Availability Statement.} All data are provided in full in this paper.

\vspace{.3cm}

\noindent\textbf{Declarations.}
\textbf{Conflict of interest.} There is no conflict of interest.

\bibliographystyle{alpha}

\begin{thebibliography}{90}
\bibitem{AS}
  M.~F.~Atiyah, I.~M.~Singer.
  \newblock The index of elliptic operators, I.
  \newblock Ann. of Math. \textbf{87}(1968), 484--530. DOI: https://doi.org/10.2307/1970715

\bibitem{BS}
 M.~Borsero, R.~Schulz.
\newblock Microlocal properties of bisingular operators.
\newblock J. Pseudo-Differ. Oper. Appl. \textbf{5}(2014), 43--67. 

\bibitem{F}
  V.~Fischer.
  \newblock Intrinsic pseudo-differential calculi on any compact Lie group.
  \newblock J. Funct. Anal. \textbf{268}(2015), 3404--3477.DOI: https://doi.org/10.1016/j.jfa.2015.03.015
  
  
\bibitem{FP}
  S.~Federico, A.~Parmeggiani.
  \newblock On a class of pseudodifferential operators on the product of compact Lie groups.
  \newblock  Math. Nachr. \textbf{296}(2023), 217--242. DOI:https://doi.org/10.1002/mana.202100400

\bibitem{GW}
  C.~G\'erard, M.~Wrochna.
  \newblock Construction of Hadamard states by characteristic Cauchy problem.
  \newblock Anal. PDE \textbf{9}(2016), 111--149.  DOI: 10.2140/apde.2016.9.111 

\bibitem{H}
  L.~H\"ormander.
  \newblock Pseudo-differential operators and hypoelliptic equations.
  \newblock Singular Integrals (Proc. Sympos. Pure Math., Chicago, Ill., 1966), 138--183, Amer. Math. Soc., Providence, RI, 1967. 

\bibitem{H1}
  L.~H\"ormander.
  \newblock \textit{Linear partial differential operators.} Third revised printing.
  \newblock Grundlehren der mathematischen Wissenschaften, Band 116. Springer-Verlag New York, Inc., New York, 1969. vii+288 pp. 

\bibitem{NR}
F.~Nicola, L.~Rodino.
\newblock Residues and index for bisingular operators.
\newblock $C^*$-algebras and elliptic theory, 187--202, Trends Math., Birkh\"auser, Basel, 2006.

\bibitem{P}
  V.~S.~Pilidi.
  \newblock Multidimensional bisingular operators.
  \newblock Soviet Math. Dokl. \textbf{12}(1971), 1723--1726. 

\bibitem{R}
  L.~Rodino.
  \newblock A class of pseudo differential operators on the product of two manifolds and applications.
  \newblock Ann. Scuola Norm. Sup. Pisa Cl. Sci. (4) \textbf{2}(1975), 287--302. DOI: https://www.numdam.org/item/ASNSP\_1975\_4\_2\_2\_287\_0/

\bibitem{RT}
  M.~Ruzhansky, V.~Turunen.
  \newblock \textit{Pseudo-differential operators and symmetries. Background Analysis and Advanced Topics}.
  \newblock  Pseudo-Differential Operators. Theory and Applications, \textbf{2}. Birkh\"auser Verlag, Basel, 2010. xiv+709 pp. DOI: 10.1007/978-3-7643-8514-9


\bibitem{S}
 R.~T.~Seeley.
 \newblock Singular integrals and boundary value problems. 
 \newblock Amer. J. Math. \textbf{88}(1966), 781--809.
 
\end{thebibliography}
\end{document}